\newcommand{\rnum}[1]{\uppercase\expandafter{\romannumeral #1\relax}}
\newcommand{\exs}{\,\exists\,}
\newcommand{\foa}{\,\forall\,}
\newcommand{\emp}{\varnothing}
\newcommand{\R}{\mathbb{R}}
\newcommand{\Z}{\mathbb{Z}}
\newcommand{\N}{\mathbb{N}}
\newcommand{\const}{const}
\newcommand{\sgn}{\operatorname{sgn}}
\newcommand{\Lip}{\operatorname{Lip}}
\newcommand{\Abs}{\operatorname{Abs}}
\newcommand{\FF}{\mathcal{F}}
\newcommand{\HH}{\mathcal{H}}
\newcommand{\wh}{\widehat}
\newcommand{\wt}{\widetilde}
\newcommand{\cl}{\overline}
\theoremstyle{theorem}
\newtheorem{thm}{Theorem}[section]
\newtheorem{prop}[thm]{Proposition}
\theoremstyle{lemma}
\newtheorem{lemma}[thm]{Lemma}
\theoremstyle{corollary}
\newtheorem{cor}[thm]{Corollary}
\newtheorem{conj}[thm]{Conjecture}
\theoremstyle{definition}
\newtheorem{defi}[thm]{Definition}
\theoremstyle{example}
\newtheorem{example}[thm]{Example}
\theoremstyle{remark}
\newtheorem{rem}[thm]{Remark}
\theoremstyle{proof}
\author{Bobo Hua}
\address{Bobo Hua: School of Mathematical Sciences, LMNS,
	Fudan University, Shanghai 200433, China; Shanghai Center for
	Mathematical Sciences, Fudan University, Shanghai 200438,
	China.}
\email{bobohua@fudan.edu.cn}
\author{Florentin M\"unch}
\address{Florentin M\"unch: Max Planck Institute for Mathematics in the Sciences, Leipzig 04103, Germany}
\email{florentin.muench@mis.mpg.de}
\author{Haohang Zhang}
\address{Haohang Zhang: YMSC, Tsinghua University, Beijing 100084, China}
\email{zhanghh22@mails.tsinghua.edu.cn}
\title[Some variants of discrete positive mass theorems on graphs]{Some variants of discrete positive mass theorems on graphs}
\date{2024.1.30}
\begin{document}

\maketitle

\begin{abstract}
	Inspired by asymptotically flat manifolds, we introduce the concept of asymptotically flat graphs and define the discrete ADM mass on them. We formulate the discrete positive mass conjecture based on the scalar curvature in the sense of Ollivier curvature, and prove the positive mass theorem for asymptotically flat graphs that are combinatorially isomorphic to grid graphs. As a corollary, the discrete torus does not admit positive scalar curvature. We prove a weaker version of the positive mass conjecture: an asymptotically flat graph with non-negative Ricci curvature is isomorphic to the standard grid graph. Hence the combinatorial structure of an asymptotically flat graph is determined by the curvature condition, which is a discrete analog of the rigidity part for the positive mass theorem. The key tool for the proof is the discrete harmonic function of linear growth associated with the salami structure introduced in \cite{salami,salami2}.

	\noindent\textbf{Keywords:} The positive mass theorem, asymptotically flat graphs, discrete ADM mass, Ollivier curvatures, scalar curvatures.
\end{abstract}


\section{Introduction}\label{sec:intro}
	\subsection{The positive mass theorem on manifolds}
	The positive mass theorem is a significant result about the gravitational energy of an isolated system in both general relativity and Riemannian geometry \cite{ADM1, ADM2, ADM3}. In 1979, Schoen-Yau first proved the three-dimensional case using the minimal surface method \cite{SY1, SY3}. Witten provided a new proof for spin manifolds of arbitrary dimensions \cite{Witten}. The general case has been proven up to seven dimensions \cite{S2}; see \cite{Lohk2006, SY17} for related results.
	
	The positive mass theorem plays a crucial role in solving classical problems in mathematics. For instance, Schoen used the positive mass theorem to solve the Yamabe problem, which is an important breakthrough in the field of nonlinear partial differential equations \cite{S1}. Bray used the positive mass theorem to prove the Penrose inequality, which provides a stronger conclusion than the positive mass theorem \cite{B1, BL}.
	
	Nowadays, the positive mass theorem is still an active topic in geometry. Huang-Wu provided a proof for hypersurfaces under weaker asymptotic conditions in 2011 \cite{HW}. The case of K\"ahler manifolds was proven by Hein-LeBrun \cite{Kahler}. Recently, there are new proofs for the three-dimensional case using Ricci flows \cite{Ricci} and level sets of harmonic functions \cite{B2}.
	
	The positive mass theorem is formulated in Riemannian geometry as follows. First, we need to impose an asymptotic condition on the manifold.
	\begin{defi}\cite{S2}
		Let $(M^n,g)$ be a Riemannian manifold, $n\ge 3$. $(M,g)$ is said to be \textbf{asymptotically flat} if it satisfies the following two conditions.
		\begin{enumerate}[(i)]
			\item There exists a compact subset $K\subset M$ and a diffeomorphism $\Phi: M\setminus K\approx \R^n\setminus \cl{B}_1.$
			
			\item In the coordinate chart defined by $\Phi,$ the metric satisfies, as $|x|\to \infty$ with the Euclidean norm $|\cdot|$,
			\begin{align}
				g_{ij}(x)&=\delta_{ij}+O(|x|^{-p}),\qquad\qquad\qquad\qquad\nonumber\\
				|x||g_{ij,k}(x)|+|x|^2|g_{ij,kl}(x)|&=O(|x|^{-p}),\label{condition}\\
				|R(x)|&=O(|x|^{-q}),\nonumber
			\end{align}
			where $R$ is the scalar curvature, $p>\frac{n-2}{2},$ $q>n$ and 
			$$g_{ij,k}:=\partial_{k}g_{ij},\quad g_{ij,kl}:=\partial_{kl}g_{ij},\quad i,j,k,l=1,2,\cdots,n.$$
			
		\end{enumerate} 
		
	\end{defi}
	
	Then we can define the ADM mass for an asymptotically flat manifold $(M^n,g).$
	\begin{defi}\cite{S2}
		The \textbf{ADM mass} or the energy of an asymptotically flat manifold $(M^n,g)$ is defined as
		\begin{align}
			E(g)=\frac{1}{4(n-1)\omega_{n-1}}\lim_{\sigma\rightarrow \infty}\int_{S_\sigma}\sum_{i,j}(g_{ij,i}-g_{ii,j})\nu_jd\xi.\label{ADM}
		\end{align}
		where $S_{\sigma}:=\{|x|=\sigma\},$ $\omega_{n-1}$ is the surface area of the unit sphere $S^{n-1}$, $\nu=\sigma^{-1}x$ is the outer normal vector, and $d\xi$ is the area element of $S_{\sigma}$.
		
	\end{defi}
	
	The ADM mass is well-defined and finite. To verify this, one can derive the following formula by calculating scalar curvatures directly using connection coefficients.
	\begin{align}
		\int_{A_{\sigma,\tau}} R(g)dx=\int_{S_{\tau}-S_\sigma}\sum_{i,j}(g_{ij,i}-g_{ii,j})\nu_jd\xi+\int_{A_{\sigma,\tau}}O(|x|^{-2p-2})dx,\label{connection}
	\end{align}
	
	where $\sigma<\tau$, $A_{\sigma,\tau}:=\{\sigma\le |x|\le \tau\}$ and $\sigma$ is sufficiently large. The positive mass theorem \cite{S2} is stated as follows.
	\begin{thm}[The positive mass theorem]
		Suppose $(M^n,g)$ is an asymptotically flat manifold with $R(g)\ge 0$ and $n\le 7$. Then $E(g)\ge 0$ and $E(g)=0$ if and only if $(M,g)$ is isometric to $(\R^n,\delta),$ where $\delta$ is the Euclidean metric.
		
	\end{thm}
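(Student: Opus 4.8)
The plan is to argue by contradiction, following the minimal hypersurface induction of Schoen--Yau \cite{SY1, SY3} and Schoen \cite{S2}; the dimensional restriction $n\le 7$ is precisely what makes this scheme run, as it guarantees that area-minimizing hypersurfaces in $M^n$ are smooth (the singular set of an area minimizer has codimension at least $7$, and $\dim\Sigma=n-1\le 6$). So suppose $E(g)<0$. The first step is a normalization: by a conformal change of $g$ combined with a density/approximation argument, one may assume that $g$ is harmonically flat near infinity, i.e. $g=u^{4/(n-2)}\delta$ outside a large ball with $u$ harmonic and $u=1+\tfrac{m}{|x|^{n-2}}+O(|x|^{1-n})$, with $m$ a negative multiple of $E(g)$, while keeping $R(g)\ge 0$. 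Such a modification changes the mass by an arbitrarily small amount and preserves the sign conditions, so it suffices to reach a contradiction in this normalized setting.

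The second, geometrically central, step is to construct a complete, properly embedded, area-minimizing hypersurface $\Sigma^{n-1}\subset (M^n,g)$ that is asymptotic at infinity to a coordinate hyperplane $\R^{n-1}\subset\R^n$. This is where the negativity of the mass enters: because $m<0$, suitably translated or slightly tilted Euclidean hyperplanes serve as barriers whose mean curvature points the right way, so a minimization problem set up with these barriers admits a solution $\Sigma$ which can neither escape to infinity nor degenerate to the trivial flat solution. Using the precise asymptotic expansion \eqref{condition} of $g$ one then pins down the asymptotics of $\Sigma$.

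The third step is the induction on dimension. Since $\Sigma$ is area-minimizing it is stable, so the second variation inequality $\int_\Sigma|\nabla\varphi|^2\ge\int_\Sigma\bigl(|A|^2+\Ric_M(\nu,\nu)\bigr)\varphi^2$ holds for all $\varphi\in C_c^\infty(\Sigma)$. Combining this with the Gauss equation and the hypothesis $R_M\ge 0$, and conformally rescaling the induced metric on $\Sigma$ by (a power of) the first eigenfunction of the stability operator, one produces on $\Sigma$ a complete metric with nonnegative scalar curvature whose induced asymptotically flat end again carries nonpositive mass. Iterating this descent down to dimension $3$ (or $2$), the argument finally closes: a stable minimal surface asymptotic to a plane in a $3$-manifold with $R\ge 0$, together with Gauss--Bonnet and the logarithmic cutoff trick, forces flatness, contradicting $m<0$. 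Hence $E(g)\ge 0$. (For spin manifolds of arbitrary dimension one could instead invoke Witten's argument \cite{Witten}: solve $D\psi=0$ with $\psi$ asymptotic to a constant spinor, apply the Lichnerowicz formula $D^2=\nabla^*\nabla+\tfrac{R}{4}$, and read off $E(g)$ from the boundary term.)

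For the rigidity statement, suppose $E(g)=0$ but $(M,g)$ is not isometric to $(\R^n,\delta)$. The point is that $E=0$ is unstable under the appropriate variations: if $R(g)\not\equiv 0$ one decreases the mass below zero by a small conformal factor supported where $R>0$; if $R(g)\equiv 0$ but $g$ is not flat, one shows $g$ cannot be a local minimizer of the mass functional among scalar-nonnegative metrics and again constructs a nearby asymptotically flat metric with $R\ge 0$ and $E<0$ — contradicting the inequality just established. Therefore $(M,g)\cong(\R^n,\delta)$. I expect the main obstacle to be the second step, namely producing the area-minimizing hypersurface with the correct asymptotic-plane behaviour and verifying that it is connected, properly embedded, and genuinely nontrivial; this demands careful barrier constructions from \eqref{condition} and the sign of $E(g)$, plus control of the minimizing sequence so it neither collapses nor runs off to infinity. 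The regularity input (hence the bound $n\le 7$) and the conformal bookkeeping in the induction are technically heavy but conceptually standard once $\Sigma$ is in hand.
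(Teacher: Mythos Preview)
The paper does not prove this theorem. Theorem~1.3 is stated purely as background from the literature, with citation to \cite{S2} (and earlier to \cite{SY1,SY3,Witten}); the paper's own results are the discrete analogues Theorem~\ref{thm:main1}, Theorem~\ref{thm:tori}, and Theorem~\ref{thm:mainric}. So there is no ``paper's own proof'' to compare your proposal against.

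That said, your outline is a faithful sketch of the Schoen--Yau minimal hypersurface argument that the paper is citing: the harmonic-flattening normalization, the barrier construction of an asymptotically planar area-minimizing $\Sigma^{n-1}$ using $m<0$, the regularity input explaining the bound $n\le 7$, the stability/Gauss-equation conformal descent, and the variational instability argument for rigidity are all correctly identified. Your self-diagnosis that the existence and asymptotics of $\Sigma$ is the delicate step is also accurate. Nothing in your write-up is wrong as a plan; it is simply not something the present paper undertakes.
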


	A fundamentally important result is the rigidity part in the theorem, which concludes the stability of the Euclidean metric. In particular, this implies that the topology of the compact set $K$ has to be trivial by the zero-mass condition.

\subsection{The setting of weighted graphs}
We introduce the setting of weighted graphs. Let $(V,E)$ be a locally finite, simple, undirected graph. Two vertices $x,y$ are called neighbours, denoted by $x\sim y$, if there is an edge connecting $x$ and $y,$ i.e. $\{x,y\}\in E.$ A graph is called connected if for any $x,y\in V$ there is a path $\{z_i\}_{i=0}^n\subset V$ connecting $x$ and $y,$ i.e.
$$x = z_0 \sim \cdots \sim z_n = y.$$
Throughout this paper, we consider only connected graphs.  We define the combinatorial graph distance $d(x,y)$ to be 
$$d(x, y) :=
\inf\{n \mid x = z_0 \sim \cdots \sim z_n = y\}.$$
For any $R\in \Z_+,$ we denote by
$$B_R(x):=\{y\in V: d(y,x)\leq R\}$$
the ball of radius $R$ centered at $x.$  Two graphs $(V_i,E_i), i=1,2,$ are called combinatorially isomorphic if there is a bijection $\Phi:V_1\to V_2$ such that $\Phi(x)\sim \Phi(y)$ if and only if $x\sim y.$

For any $K_1,K_2\subset V,$ we define the set of edges between $K_1,K_2$ to be 
$$E(K_1,K_2):=\{\{x,y\}\in E: x\in K_1, y\in K_2\}.$$
For any subset $\Omega\subset V,$ we denote by $\partial \Omega:=E(\Omega,V\setminus \Omega)$ the edge boundary of $\Omega,$ and by
$$\delta\Omega:=\{y\in V\setminus\Omega: \exs x\in \Omega\ \mathrm{s.t.}\ y\sim x\}$$  the vertex boundary of $\Omega.$ We denote by $\overline{\Omega}:=\Omega\cup \delta\Omega$ the closure of $\Omega$.

Let $$w: E\to (0,\infty),\quad \{x,y\}\mapsto w(x,y)=w(y,x),$$ be a function that assigns positive weights to the edges of the graph. We call the triple $G=(V,E,w)$ a \emph{weighted graph}. Note that in general setting, weights are also assigned on vertices \cite{Kellerbook}, and we restrict to trivial vertex weights in this paper. Two graphs $G_i=(V_i,E_i,w_i)$, $i=1,2,$ are called \textit{weighted isomorphic} if there exists a combinatorially isomorphism $\Phi:V_1\rightarrow V_2$ preserving weights, i.e. $$w_2(\Phi(x),\Phi(y))=w_1(x,y),\quad \forall x\sim y.$$ 

For a weighted graph $G$ and any function $f:V\to \R,$ the Laplacian $\Delta$ is defined as
$$\Delta f (x):= \sum_{y\in V:y\sim x}w(x,y)\left(f(y)-f(x)\right), \quad\forall x\in V.$$
For $\Omega\subset V,$ a function $f$ on $V$ is called harmonic on $\Omega$ if $\Delta f=0$ on $\Omega.$  

		
		
Let $\Z^n$ be a graph with the set of vertices $\Z^n:=\{x\in \R^n: x_i\in \Z\}$ and the set of edges $\{\{x,y\}: x,y\in \Z^n, \|x-y\|_1=1\},$ which is commonly known as the \textit{grid graph}. We call $(\Z^n, w)$ a weighted grid graph, or grid graph, with edge weights $w,$ and call it a standard grid graph when $w\equiv 1.$

For the grid graph $\Z^n,$ $x\in \Z^n$ and $r\in\Z_+,$ the cube (the sphere, resp.) of side-length $2r$  centered at $x$ is defined as
$$Q_r(x):=\{y\in\Z^n\,|\, \|y-x\|_\infty\le r\},$$
$$
(S_r(x):=\{y\in \Z^n\,|\,\|y-x\|_\infty=r\}, \mathrm{resp}.),
$$ 
where $\|x\|_{\infty}:=\max_{1\le i\le n}|x_i|$ is the $l^\infty$ norm of $x.$ Moreover, we define
 $$E_r(x):=\partial Q_r(x),\quad \wt{E}_r(x):=E(\delta Q_r(x), S_{r+2}),$$ which are illustrated in Figure~\ref{Fig:boundary1}.
For the case $x=0,$ we simply write $Q_r, E_r$.
				
		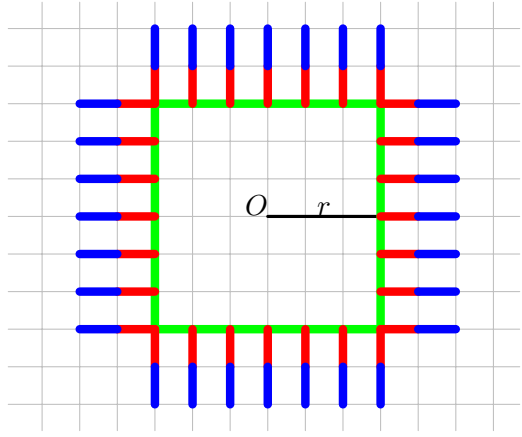
\begin{figure}[htbp]
			\centering
			\definecolor{qqffqq}{rgb}{0,1,0}
			\definecolor{qqqqff}{rgb}{0,0,1}
			\definecolor{ffqqqq}{rgb}{1,0,0}
			\definecolor{ududff}{rgb}{0.3,0.3,1}
			\definecolor{cqcqcq}{rgb}{0.75,0.75,0.75}
			\begin{tikzpicture}[line cap=round,line join=round,>=triangle 45,x=1cm,y=1cm,scale=0.5]
				\draw [color=cqcqcq,, xstep=1cm,ystep=1cm] (-1.9,-0.7) grid (11.7,10.7);
				\clip(-1.9,-0.7) rectangle (11.7,10.7);
				\draw [line width=1.2pt] (5,5)-- (8,5);
				\draw [line width=3.2pt,color=qqffqq] (2,8)-- (8,8);
				\draw [line width=3.2pt,color=qqffqq] (8,8)-- (8,2);
				\draw [line width=3.2pt,color=qqffqq] (8,2)-- (2,2);
				\draw [line width=3.2pt,color=qqffqq] (2,2)-- (2,8);
				\draw [line width=3.2pt,color=ffqqqq] (2,9)-- (2,8);
				\draw [line width=3.2pt,color=ffqqqq] (3,8)-- (3,9);
				\draw [line width=3.2pt,color=ffqqqq] (4,9)-- (4,8);
				\draw [line width=3.2pt,color=ffqqqq] (5,8)-- (5,9);
				\draw [line width=3.2pt,color=ffqqqq] (6,9)-- (6,8);
				\draw [line width=3.2pt,color=ffqqqq] (7,9)-- (7,8);
				\draw [line width=3.2pt,color=ffqqqq] (8,9)-- (8,8);
				\draw [line width=3.2pt,color=ffqqqq] (8,8)-- (9,8);
				\draw [line width=3.2pt,color=ffqqqq] (9,7)-- (8,7);
				\draw [line width=3.2pt,color=ffqqqq] (9,6)-- (8,6);
				\draw [line width=3.2pt,color=ffqqqq] (9,5)-- (8,5);
				\draw [line width=3.2pt,color=ffqqqq] (9,4)-- (8,4);
				\draw [line width=3.2pt,color=ffqqqq] (8,3)-- (9,3);
				\draw [line width=3.2pt,color=ffqqqq] (9,2)-- (8,2);
				\draw [line width=3.2pt,color=ffqqqq] (8,2)-- (8,1);
				\draw [line width=3.2pt,color=ffqqqq] (7,1)-- (7,2);
				\draw [line width=3.2pt,color=ffqqqq] (6,2)-- (6,1);
				\draw [line width=3.2pt,color=ffqqqq] (5,1)-- (5,2);
				\draw [line width=3.2pt,color=ffqqqq] (4,2)-- (4,1);
				\draw [line width=3.2pt,color=ffqqqq] (3,1)-- (3,2);
				\draw [line width=3.2pt,color=ffqqqq] (2,1)-- (2,2);
				\draw [line width=3.2pt,color=ffqqqq] (2,2)-- (1,2);
				\draw [line width=3.2pt,color=ffqqqq] (1,3)-- (2,3);
				\draw [line width=3.2pt,color=ffqqqq] (2,4)-- (1,4);
				\draw [line width=3.2pt,color=ffqqqq] (1,5)-- (2,5);
				\draw [line width=3.2pt,color=ffqqqq] (2,6)-- (1,6);
				\draw [line width=3.2pt,color=ffqqqq] (1,7)-- (2,7);
				\draw [line width=3.2pt,color=ffqqqq] (2,8)-- (1,8);
				\draw [line width=3.2pt,color=qqqqff] (2,9)-- (2,10);
				\draw [line width=3.2pt,color=qqqqff] (3,9)-- (3,10);
				\draw [line width=3.2pt,color=qqqqff] (4,10)-- (4,9);
				\draw [line width=3.2pt,color=qqqqff] (5,9)-- (5,10);
				\draw [line width=3.2pt,color=qqqqff] (6,10)-- (6,9);
				\draw [line width=3.2pt,color=qqqqff] (7,10)-- (7,9);
				\draw [line width=3.2pt,color=qqqqff] (8,10)-- (8,9);
				\draw [line width=3.2pt,color=qqqqff] (9,8)-- (10,8);
				\draw [line width=3.2pt,color=qqqqff] (10,7)-- (9,7);
				\draw [line width=3.2pt,color=qqqqff] (9,6)-- (10,6);
				\draw [line width=3.2pt,color=qqqqff] (10,5)-- (9,5);
				\draw [line width=3.2pt,color=qqqqff] (9,4)-- (10,4);
				\draw [line width=3.2pt,color=qqqqff] (10,3)-- (9,3);
				\draw [line width=3.2pt,color=qqqqff] (9,2)-- (10,2);
				\draw [line width=3.2pt,color=qqqqff] (8,1)-- (8,0);
				\draw [line width=3.2pt,color=qqqqff] (7,1)-- (7,0);
				\draw [line width=3.2pt,color=qqqqff] (6,1)-- (6,0);
				\draw [line width=3.2pt,color=qqqqff] (5,1)-- (5,0);
				\draw [line width=3.2pt,color=qqqqff] (4,1)-- (4,0);
				\draw [line width=3.2pt,color=qqqqff] (3,1)-- (3,0);
				\draw [line width=3.2pt,color=qqqqff] (2,1)-- (2,0);
				\draw [line width=3.2pt,color=qqqqff] (1,2)-- (0,2);
				\draw [line width=3.2pt,color=qqqqff] (0,3)-- (1,3);
				\draw [line width=3.2pt,color=qqqqff] (1,4)-- (0,4);
				\draw [line width=3.2pt,color=qqqqff] (0,5)-- (1,5);
				\draw [line width=3.2pt,color=qqqqff] (1,6)-- (0,6);
				\draw [line width=3.2pt,color=qqqqff] (0,7)-- (1,7);
				\draw [line width=3.2pt,color=qqqqff] (1,8)-- (0,8);
				\draw[color=black] (6.5,5.2) node {$r$};
				\draw[color=black] (4.7,5.3) node {$O$};
			\end{tikzpicture}
			\caption{Green edges indicate $S_r$ with $Q_r$ inside. Red edges indicate $E_r$ and blue edges indicate $\wt{E}_r.$}\label{Fig:boundary1}
		\end{figure}

Ollivier \cite{Ollivier} introduced a curvature notion on graphs via the optimal transport, later modified by Lin-Lu-Yau \cite{LLY,limit-free}. 
The Ollivier curvature has been extensively studied on graphs and there are many interesting applications; see e.g. \cite{Paeng12,CHO2013916,JostLiu14,BhattMuk15,Ni2015RicciCO,Sia2019OllivierRicciCM,Jost2019LiouvillePA,Eidi2019OllivierRC,Bai2020DiscoveringHS,Leal2020RicciCO,tee2021enhanced,Gosztolai2021UnfoldingTM,Mun22F}. Recall that the Ollivier curvature between two vertices $x,y$ (in particular for an edge $\{x,y\}\in E$) is given by, see \cite{limit-free},
		\begin{align}
			\kappa(x,y)=\inf\limits_{\begin{subarray}{c}
					f:B_1(x)\cup B_1(y)\rightarrow \R\\
					f\in \Lip(1)\\
					\nabla_{yx}f=1
			\end{subarray}}\nabla_{xy}\Delta f, \label{limit-freeR}
		\end{align}
		where 
		$$\nabla_{xy}f:=\frac{f(x)-f(y)}{d(x,y)},$$		
		$$\Lip(1):=\{f:V\rightarrow \R\,|\,|\nabla_{xy}f|\le 1,\foa x,y\in V\}.$$

In particular, it suffices to consider integer valued functions in (\ref{limit-freeR}) when the distance is taken to be the combinatorial distance \cite{limit-free}, i.e.
\begin{align}
	\kappa(x,y)=\inf\limits_{\begin{subarray}{c}
			f:B_1(x)\cup B_1(y)\rightarrow {\Z}\\
			f\in \Lip(1)\\
			\nabla_{yx}f=1
	\end{subarray}}\nabla_{xy}\Delta f,\label{limit-free}
\end{align}
		
Note that the Ollivier curvature is a discrete analog of the Ricci curvature in the continuous setting \cite{Ollivier}, and hence we call it the Ricci curvature on a graph.

We define the scalar curvature of the graph at a vertex $x$ as
$$
R(x)=\sum_{y\in V: y\sim x}\kappa(x,y).
$$
		
The advantage of the grid graph is that the Ricci curvature (and therefore its scalar curvature) has an explicit expression in term of edge weights; see Proposition~\ref{prop:ric}.	For a weighted grid graph $(\Z^n,w),$ let $e_i$ be the $i$-th unit coordinate vector, which is zero except the $i$-th element being one. Let $x_j^\pm:=x\pm e_j$ respectively. We define 

\begin{small}
\begin{align*}
	\Abs(x):=
	\sum_{\begin{subarray}{c}
			i,j=1\\
			i\neq j
	\end{subarray}}^n&\left(|w(x,x+e_i)-w(x_j^+,x^+_j+e_i)|+|w(x,x+e_i)-w(x_j^-,x_j^-+e_i)|\right.\\
	&+\left.|w(x,x-e_i)-w(x_j^+,x_j^+-e_i)|+|w(x,x-e_i)-w(x_j^-,x^-_j-e_i)|\right),
\end{align*}
\end{small}

which measures the distortion from the standard grid graph. The scalar curvature $R(x)$ can be expressed as linear terms in $w$ minus $\Abs(x)$, see Corollary \ref{cor:scal}.

\subsection{Discrete positive mass conjecture and theorem on graphs}

We introduce the definition of asymptotically flat graphs, which are discrete counterparts of asymptotically flat manifolds.
		\begin{defi}
			The graph $G=(V,E,w_G)$ is called \textbf{asymptotically flat} if it satisfies the following two conditions:
			\begin{enumerate}[(i)]
				\item There exists $p>n\geq 2,$ and a weighted grid graph $(\Z^n,{w})$ satisfying, as $|x|\to \infty,$
				\begin{align*}
					w(x,y)&=1+o(1),\quad \foa x\sim y,\\
					\Abs(x)&=O(|x|^{-p}),\quad \mathrm{and}\\
					|R(x)|&=O(|x|^{-p}).
			\end{align*}
		\item 
				There exists a finite subset $K\subset V,$ $r\in\Z_+$ such that the induced subgraph $(V\setminus K,w_G)$ is weighted isomorphic to $(\Z^n\setminus Q_r,{w}).$ Furthermore, denote the isomorphism by $\Phi,$ there is $E(K,\Phi^{-1}(\Z^n\setminus Q_{r+1}))=\emp.$
				
	\end{enumerate}
		\end{defi}

		
 Inspired by (\ref{ADM}) and (\ref{connection}), we define the discrete ADM mass of asymptotically flat graphs as follows.
		\begin{defi}[Discrete ADM mass]
			The ADM mass or the energy of an asymptotically flat graph $G$ is defined as
			\begin{align}
				E(G)=\frac{1}{2^nn}\lim_{r\rightarrow\infty} \left(\sum_{e\in E_r}w(e)-\sum_{\tau\in \wt{E}_r}w(\tau)\right),
			\end{align}
			
		\end{defi}		
		
We propose the following discrete positive mass conjecture. 
		
		\begin{conj}[Discrete positive mass conjecture]
			For an asymptotically flat graph $G$ with non-negative scalar curvature, we have $E(G)\ge 0.$ Furthermore, $E(G)=0$ if and only if $G$ is a standard grid graph.
			
		\end{conj}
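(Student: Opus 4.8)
The plan is to reduce the conjecture to two pieces: an exact discrete ``connection identity'' expressing the ADM mass as a bulk sum of scalar curvature plus a manifestly non-negative distortion, and a rigidity statement forcing the combinatorial type of $G$ to be a grid. For the first piece, Corollary~\ref{cor:scal} writes $R(x)=L(x)-\Abs(x)$, where $L(x)$ is a linear combination of nearby edge weights and $\Abs(x)\ge 0$. Summing over a large cube, the linear part $L$ behaves as a discrete divergence and telescopes to a boundary expression which --- by the very choice of the edge sets $E_r,\wt E_r$ in the definition of the ADM mass --- equals $\sum_{e\in E_r}w(e)-\sum_{\tau\in\wt E_r}w(\tau)$ up to a bounded correction supported near $K$ that stabilizes. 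The asymptotic flatness bounds $\Abs(x)=O(|x|^{-p})$, $|R(x)|=O(|x|^{-p})$ with $p>n$, together with $\#S_\rho=O(\rho^{n-1})$, make $\sum_x\Abs(x)$ and $\sum_xR(x)$ absolutely convergent, so the limit defining $E(G)$ exists and has the shape
$$E(G)=\frac{1}{2^nn}\Bigl(\sum_{x\in\Z^n}R(x)+\sum_{x\in\Z^n}\Abs(x)\Bigr).$$
Since $\Abs\ge 0$ pointwise, $R\ge 0$ immediately gives $E(G)\ge 0$, and $E(G)=0$ forces $R\equiv 0$ and $\Abs\equiv 0$; the vanishing of $\Abs$ together with $w\to 1$ at infinity forces $w\equiv 1$. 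This settles the conjecture whenever $G$ is combinatorially isomorphic to a grid graph.

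When the combinatorial type of $G$ is not known a priori, the strategy is to deduce it from curvature before applying the identity above. For each $i=1,\dots,n$ I would construct a harmonic function $h_i$ on $G$ of linear growth asymptotic to the coordinate function $x_i$ under the isomorphism $\Phi$: solve $\Delta_G h_{i,R}=0$ on $B_R$ with boundary data $x_i\circ\Phi$, use the Lipschitz/gradient estimates available under non-negative Ollivier curvature to extract a convergent subsequence as $R\to\infty$, and use summability of the $O(|x|^{-p})$ perturbation to control oscillation so that $h_i$ is genuinely non-constant. A non-constant linear-growth harmonic function on a graph of non-negative Ricci curvature forces, by the salami structure theory of \cite{salami,salami2}, a splitting of $G$ as a weighted Cartesian product with a $\Z$-factor along the level sets of $h_i$; iterating this $n$ times peels off $n$ independent $\Z$-factors, and the polynomial volume growth of degree $n$ inherited from the grid at infinity together with connectedness of $G\setminus K$ forces the residual factor to be a point, so $G\cong\Z^n$ combinatorially and the product structure plus $w\to 1$ gives $w\equiv 1$. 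Combined with the first part (or a direct count of $E_r$ versus $\wt E_r$ on the standard grid), this yields the conjecture, and it is precisely the route giving the Ricci-curvature version of the theorem.

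The main obstacle is the gap between non-negative \emph{scalar} curvature, which is all the conjecture assumes, and non-negative \emph{Ricci} curvature, which the second step needs in order to invoke the salami machinery: summing the identity $R(x)=\sum_{y\sim x}\kappa(x,y)$ over a ball only controls an average of edge curvatures, and individual $\kappa(x,y)$ may be negative, so the scalar bound cannot be upgraded to a Ricci bound. This is the discrete counterpart of the fact that the positive mass theorem under a scalar curvature hypothesis lies much deeper than Ricci rigidity. I expect that removing the combinatorial hypothesis under scalar curvature alone will need a genuinely new ingredient --- a discrete analog of the Schoen--Yau descent to an area-minimizing $\Z^{n-1}$ carrying non-negative induced scalar curvature, or a discrete Witten-type spinorial identity --- none of which follows from the tools developed here; accordingly I would treat the combinatorially-grid case with scalar curvature and the Ricci-curvature rigidity as the statements one can actually prove, leaving the full conjecture open.
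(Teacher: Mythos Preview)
Your assessment is correct: the statement is a \emph{conjecture} in the paper, not a theorem, and the paper proves exactly the two partial results you isolate --- the grid case under scalar curvature (Theorem~\ref{thm:main1}) and the general case under Ricci curvature (Theorem~\ref{thm:mainric}) --- leaving the full scalar-curvature conjecture open for the same reason you give.

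Your first piece matches the paper's argument essentially verbatim: the identity you describe is precisely \eqref{key}, and the rigidity $\Abs\equiv 0\Rightarrow w\equiv 1$ is exactly how the paper concludes.

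Your second piece is in the right spirit but diverges from the paper in a way worth flagging. The salami theory of \cite{salami,salami2} does \emph{not} deliver a Cartesian-product splitting $G\cong\Z\times G'$ that one can iterate; a salami is only quasi-isometric to a line, and its cross-sections may vary. So ``peeling off $\Z$-factors'' is not available as stated. What the paper does instead is: (i) first prove directly (Lemma~\ref{lem:trivial}) that $w\equiv 1$ outside a finite set, using concavity of weights along lines from the Ricci formula; (ii) build the $n$ harmonic functions $h_i$ not by Dirichlet exhaustion but by the \emph{extremal Lipschitz extension} on explicitly constructed slab-salamis $C_R^i$, which yields integer-valued $1$-Lipschitz harmonic functions with $h_i=\Phi_i$ outside $K$; (iii) assemble $\widehat h=(h_1,\dots,h_n):G\to\Z^n$ as a single coordinate map, and then run a delicate layer-by-layer combinatorial induction (exclude diagonal edges via a rotated salami, force the cross structure, rule out multiple vertices over one lattice point using a test function in the curvature infimum) to shrink the unknown region $K$ one shell at a time. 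The integer-valued $1$-Lipschitz property is what makes this combinatorics work, and it is not obvious you would get it from a subsequential Dirichlet limit. So your outline would need to be rerouted through a coordinate-map-plus-induction argument rather than an iterated splitting.

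Your diagnosis of the obstruction --- scalar curvature controls only $\sum_{y\sim x}\kappa(x,y)$, not individual $\kappa(x,y)$, so the salami machinery is unavailable --- is exactly the paper's position.
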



As we don't know the combinatorial structure in the finite subset $K,$ the scalar curvature is hard to compute in $K$. To approach the positive mass conjecture on graphs, we first consider the case that the combinatorial structure of $G$ is same as a grid graph.
In this case, we can prove the positive mass conjecture.

	\begin{thm}[Positive mass theorem on grid graphs]\label{thm:main1}
		For an asymptotically flat grid graph $G=(\Z^n,w)$ with $R\ge 0$, we have $E(G)\ge 0.$ Furthermore, $E(G)=0$ if and only if $G$ is a standard grid graph, i.e. $w\equiv 1.$
		
	\end{thm}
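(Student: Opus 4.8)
The plan is to express the truncated mass $\mathcal M_r:=\sum_{e\in E_r}w(e)-\sum_{\tau\in\wt E_r}w(\tau)$ as a sum over the vertices of $S_r$ of a curvature-type quantity plus controllable error terms, in analogy with the continuous identity \eqref{connection}. First I would use Corollary~\ref{cor:scal} to write, for each $x$, the scalar curvature $R(x)$ as a linear combination of nearby edge weights minus $\Abs(x)$; summing $R(x)$ over $x\in Q_r$ (or over $x\in S_r$) and telescoping the linear-in-$w$ part should produce exactly the boundary expression $\sum_{e\in E_r}w(e)-\sum_{\tau\in\wt E_r}w(\tau)$, up to the accumulated $\Abs$-terms. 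Concretely I expect an identity of the form
\begin{equation*}
\mathcal M_r=\sum_{x\in S_r}R(x)+\sum_{x}O(\Abs(x)),
\end{equation*}
where the second sum ranges over a bounded-width shell around $S_r$.

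The next step is to take $r\to\infty$. Since $G$ is asymptotically flat with $p>n$, we have $\Abs(x)=O(|x|^{-p})$, and the number of vertices in a shell of radius $r$ is $O(r^{n-1})$, so $\sum_{x:\,|x|\sim r}\Abs(x)=O(r^{n-1-p})\to 0$; thus the error terms vanish in the limit and $E(G)=\frac{1}{2^nn}\lim_r\sum_{x\in S_r}R(x)$. The hypothesis $R\ge 0$ then immediately gives $E(G)\ge 0$, provided one also checks the limit exists — which follows from the same summability, since $\sum_{x\in\Z^n}|R(x)|<\infty$ when $|R(x)|=O(|x|^{-p})$ with $p>n$, so $\mathcal M_r$ is Cauchy in $r$.

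For the rigidity statement, $E(G)=0$ together with $R\ge 0$ and the convergence of $\sum_x R(x)$ forces $R(x)=0$ for all but finitely many $x$; more precisely, if the telescoping identity is set up so that $\mathcal M_r$ is nondecreasing in $r$ up to $o(1)$ error (each new shell contributes $\sum R(x)\ge 0$), then $E(G)=0$ forces $R(x)=0$ for \emph{all} $x$ outside a fixed compact set, and then an induction inward using the explicit formula for $R$ in terms of $w$ (Corollary~\ref{cor:scal}) should propagate $w\equiv 1$ everywhere: knowing $w=1$ far out and $R(x)=0$ at each successive shell, the linear relations among the weights pin down the inner weights, with the $\Abs(x)$ term forcing equality of adjacent parallel edge weights. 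The converse (a standard grid has $E=0$) is a direct computation since $\Abs\equiv 0$ and each $S_r$ has $|E_r|=|\wt E_r|$ with all weights $1$.

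The main obstacle I anticipate is the bookkeeping in the first step: getting the telescoping of the linear-in-$w$ part of $\sum_{x\in Q_r}R(x)$ to match the geometric boundary sets $E_r$ and $\wt E_r$ exactly (with the right multiplicities and the normalizing constant $2^nn$), and simultaneously showing that the leftover terms really are $O(\Abs)$ on a bounded shell rather than spread over all of $Q_r$. A secondary difficulty is the inward induction in the rigidity part: the condition $R(x)=0$ is a single scalar equation per vertex while $w$ has several components per vertex, so one must use the non-negativity/structure of the $\Abs$ term (it is a sum of absolute values, so $R(x)=0$ with the linear part already forced to a known value should make each absolute value vanish) to extract enough equations — this is where the argument genuinely uses $R\ge 0$ pointwise and not just the mass being zero.
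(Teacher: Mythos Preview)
Your telescoping identity is set up incorrectly, and this breaks both halves of the argument. The exact identity (this is \eqref{key} in the paper) is
\[
\mathcal M_r \;=\; \sum_{x\in Q_r} R(x) \;+\; \sum_{x\in Q_r} \Abs(x),
\]
with \emph{both} sums taken over the full cube $Q_r$, not over the sphere $S_r$, and with no error term at all. Your displayed formula $\mathcal M_r=\sum_{x\in S_r}R(x)+O(\Abs\text{ on a shell})$ cannot be right: since $|R(x)|=O(|x|^{-p})$ with $p>n$, one has $\sum_{x\in S_r}R(x)=O(r^{n-1-p})\to 0$, so your conclusion $E(G)=\frac{1}{2^nn}\lim_r\sum_{x\in S_r}R(x)$ would force $E(G)=0$ for \emph{every} asymptotically flat grid graph, which is false. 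The point you flag as a worry---that the $\Abs$ contribution might be ``spread over all of $Q_r$''---is exactly what happens, and it is not a problem: $\Abs(x)\ge 0$ is not an error to be discarded but a second non-negative contribution to the mass. With the correct identity, asymptotic flatness makes both $\sum_x R(x)$ and $\sum_x\Abs(x)$ absolutely convergent, hence $2^nn\,E(G)=\sum_{x\in\Z^n}\bigl(R(x)+\Abs(x)\bigr)\ge 0$.

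The rigidity step is then much simpler than your inward induction. From $E(G)=0$, $R\ge 0$, and $\Abs\ge 0$, the monotone sequence $\sum_{x\in Q_r}(R(x)+\Abs(x))$ is non-negative, non-decreasing, and has limit $0$, so $R(x)=0$ \emph{and} $\Abs(x)=0$ for \emph{every} $x\in\Z^n$, not just outside a finite set. The condition $\Abs(x)=0$ says that any two parallel adjacent edges have equal weight; propagating this along lines and using $w(x,y)=1+o(1)$ at infinity gives $w\equiv 1$ directly. Your proposed scheme of extracting $w\equiv 1$ from $R(x)=0$ alone, layer by layer, runs into precisely the underdetermination you anticipated; it is the vanishing of $\Abs$ (a sum of absolute values) that supplies the missing equations, and you get it for free once the identity is written correctly.
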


	The proof of the theorem crucially uses the grid structure, for which the scalar curvature has an explicit formula related to the ADM mass; see \eqref{key}. The positive mass theorem on grid graphs motivates the following result on tori.
	\begin{thm}\label{thm:tori}
	The total curvature of a discrete weighted torus is non-positive, provided that the torus satisfies the distance condition defined in Section~\ref{sec:tor}. Furthermore, if the torus has non-negative scalar curvature, then the scalar curvature has to be zero everywhere.
		
	\end{thm}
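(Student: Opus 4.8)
The plan is to integrate the scalar curvature over the entire torus and exploit the absence of boundary, so that the ``connection part'' of the curvature --- the analogue of the right-hand side of \eqref{connection}, and of its discrete counterpart \eqref{key} used for Theorem~\ref{thm:main1} --- drops out completely, leaving only the manifestly non-positive term $-\sum_x\Abs(x)$. Concretely, a discrete weighted torus is a finite weighted graph $T=(\Z^n/\Lambda,w)$ for a full-rank sublattice $\Lambda\subset\Z^n$, so that $T$ has empty vertex boundary. The first step is to invoke the distance condition of Section~\ref{sec:tor}: it is tailored so that, for every $x\in T$, the weighted ball around $x$ of the fixed radius (depending only on $n$) needed to evaluate the Ollivier curvatures $\kappa(x,x\pm e_i)$ is weighted-isomorphic to the corresponding ball in a weighted grid graph $(\Z^n,w)$. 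Granting this, Proposition~\ref{prop:ric} and Corollary~\ref{cor:scal} transfer verbatim to $T$ and yield the pointwise splitting
\begin{align*}
	R(x)=L_w(x)-\Abs(x),\qquad x\in T,
\end{align*}
with $L_w(x)$ linear in the edge weights, $L_w\equiv 0$ when $w\equiv 1$, and $\Abs(x)\ge 0$.

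Next I would sum over all vertices of $T$. By the explicit form of $L_w$ from Corollary~\ref{cor:scal}, each $L_w(x)$ is a finite signed sum of edge weights over a bounded neighbourhood of $x$ having the shape of a discrete divergence: every weight occurring with a $+$ sign in $L_w(x)$ occurs with a $-$ sign in $L_w(x')$ for a suitable lattice translate $x'$ of $x$. Since $T$ is finite with no boundary, reindexing the summation cancels these contributions in pairs, so $\sum_{x\in T}L_w(x)=0$; this is the discrete analogue of the identity $\int_M\div X\,d\vol=0$ on a closed manifold. Hence
\begin{align*}
	\sum_{x\in T}R(x)=-\sum_{x\in T}\Abs(x)\le 0,
\end{align*}
which is the first assertion, with equality if and only if $\Abs\equiv 0$ on $T$. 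For the rigidity part, if $R(x)\ge 0$ for every $x$ then $0\le\sum_{x\in T}R(x)\le 0$, so the sum vanishes and, each summand being non-negative, $R\equiv 0$ on $T$.

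I expect the main obstacle to be the first step: pinning down exactly how large a ``local injectivity radius'' the distance condition must force. The delicate point is that the Ollivier curvature \eqref{limit-free} is an infimum over $\Lip(1)$ test functions whose Lipschitz constant is measured in the \emph{global} graph distance of $T$, so one must rule out the possibility that short cycles of $T$ shrink distances inside the relevant ball and thereby change the infimum; the distance condition is designed precisely to exclude this. Once the local scalar-curvature formula is available on $T$, the telescoping cancellation of $\sum_x L_w(x)$ is routine bookkeeping built directly on the explicit expression in Corollary~\ref{cor:scal}.
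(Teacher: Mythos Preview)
Your proposal is correct and follows essentially the same strategy as the paper: use the distance condition to import the explicit scalar-curvature formula $R(x)=L_w(x)-\Abs(x)$ to the torus, then observe that the linear part telescopes to zero on the closed graph, leaving $\sum_x R(x)=-\sum_x\Abs(x)\le 0$. The only cosmetic difference is that the paper organizes the telescoping along cycles in each edge direction at the level of Ricci curvatures (showing each directional cycle sum $S_i([x])\le 0$) before summing, whereas you telescope directly at the scalar level over all vertices; the cancellation mechanism is identical.
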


	Moreover, by imposing a stronger decay condition on an asymptotically flat grid graph, the mass has to be zero. The following corollary is a discrete analog of \cite[Theorem 3 (ii)]{mass} for manifolds, which indicates that the ADM mass of a graph is contained in the term $|x|^{-(n-2)}$.
	\begin{cor}\label{coro:strongdecay}
		Let $G$ be an asymptotically flat grid graph. Suppose $w(x,y)=1+O(|x|^{-p})$ and $|w(x,y)-w(y,z)|=O(|x|^{-p-1})$ for $x\sim y\sim z$ on the same line as $x\rightarrow\infty.$ If $p>n-2,$ then $E(G)=0$ and $w\equiv 1.$
	\end{cor}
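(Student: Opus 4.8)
The plan is to prove the two conclusions separately: $E(G)=0$ follows directly by reducing the ADM mass to a sum of one–dimensional second differences of $w$, while $w\equiv 1$ is the rigidity statement, which I would extract from the identity \eqref{key} used in the proof of Theorem~\ref{thm:main1}. For $E(G)=0$ the key point is the natural bijection behind the mass: each $y\in\delta Q_r$ lies on exactly one open face of $Q_{r+1}$, so there is a unique coordinate $i=i(y)$ with $|y_i|=r+1$ and $|y_j|\le r$ for $j\ne i$; the edge from $y$ to $y-\sgn(y_i)e_i$ lies in $E_r=\partial Q_r$, the edge from $y$ to $y+\sgn(y_i)e_i$ lies in $\wt E_r=E(\delta Q_r,S_{r+2})$, and both assignments $E_r\leftrightarrow\delta Q_r\leftrightarrow\wt E_r$ are direction–preserving bijections. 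Hence
\[
\sum_{e\in E_r}w(e)-\sum_{\tau\in\wt E_r}w(\tau)=\sum_{y\in\delta Q_r}\sgn\!\big(y_{i(y)}\big)\Big(w\big(y-e_{i(y)},\,y\big)-w\big(y,\,y+e_{i(y)}\big)\Big),
\]
a sum of signed differences of $w$ across the collinear triples $y-e_{i(y)},\,y,\,y+e_{i(y)}$. Since $|y|\asymp r$ on $\delta Q_r$, the hypothesis $|w(x,y)-w(y,z)|=O(|x|^{-p-1})$ makes each summand $O(r^{-p-1})$, and as $|\delta Q_r|=2n(2r+1)^{n-1}=O(r^{n-1})$ the whole difference is $O(r^{n-2-p})$, which tends to $0$ because $p>n-2$. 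Therefore $E(G)=0$. (This step uses only the collinear–derivative bound and $p>n-2$; asymptotic flatness enters only to make the series below absolutely convergent.)

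For the rigidity I would feed $E(G)=0$ into \eqref{key}. By Corollary~\ref{cor:scal}, on a grid graph $R(x)=\ell(w)(x)-\Abs(x)$, where $\ell$ is a local linear operator in the edge weights with $\ell(\mathbf 1)=0$ (indeed $\ell$ annihilates weights that are affine along coordinate lines); summing this divergence–form identity over the cubes $Q_r$ recovers precisely the boundary quantity appearing in the ADM mass, so \eqref{key} takes the form $2^nn\,E(G)=\sum_{x\in\Z^n}\big(R(x)+\Abs(x)\big)$, the series converging absolutely because $|R(x)|,\Abs(x)=O(|x|^{-p_0})$ with $p_0>n$ by asymptotic flatness. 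With $E(G)=0$ this gives $\sum_x\big(R(x)+\Abs(x)\big)=0$, and — running the rigidity half of the proof of Theorem~\ref{thm:main1} but using the strong decay $p>n-2$ and the collinear–derivative bound in place of the pointwise sign $R\ge 0$ — the aim is to upgrade this to the pointwise identities $\Abs\equiv 0$ and $R\equiv 0$. Granting these, $\Abs\equiv 0$ forces $w(x,x+e_i)$ to depend only on $x_i$, say $w(x,x+e_i)=\phi_i(x_i)$, with $\phi_i(t)\to 1$ as $t\to\pm\infty$ by asymptotic flatness condition (i); then $R\equiv 0$ collapses to the one–dimensional system $\sum_i\big(\phi_i(x_i+1)+\phi_i(x_i-1)-2\phi_i(x_i)\big)\equiv 0$, which forces each $\phi_i$ to be affine, hence $\equiv 1$, and so $w\equiv 1$.

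The main obstacle is exactly the extraction of $\Abs\equiv 0$ and $R\equiv 0$ from the single scalar identity $\sum_x\big(R(x)+\Abs(x)\big)=0$: under the hypotheses of Theorem~\ref{thm:main1} this is immediate, since $R\ge 0$ makes every summand nonnegative, whereas here one must use both the decay exponent $p>n-2$ and the collinear–derivative estimate to rule out the cancellations that could a priori occur in this sum, and it is precisely here that the discrete statement goes beyond the continuous analog \cite[Theorem 3(ii)]{mass}. Everything else — the bijection, the telescoping estimate giving $E(G)=0$, and the final one–dimensional Liouville step — is routine once \eqref{key} is in hand.
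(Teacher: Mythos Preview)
Your argument for $E(G)=0$ is correct and is exactly the paper's proof: pair each edge of $E_r$ with the unique collinear edge of $\wt E_r$ through the common vertex on $\delta Q_r$, use the hypothesis $|w(x,y)-w(y,z)|=O(|x|^{-p-1})$ on each pair, and sum over $|\delta Q_r|=2n(2r+1)^{n-1}$ pairs to get $O(r^{n-2-p})\to 0$. The paper writes this as a single displayed inequality.

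Where you diverge is in the rigidity step. The paper's proof of $w\equiv 1$ is one clause: ``Therefore $E(G)=0$, and then we have $w\equiv 1$.'' This is an invocation of the rigidity half of Theorem~\ref{thm:main1}, which requires $R\ge 0$. The corollary is placed and phrased as a consequence of Theorem~\ref{thm:main1} (``the positive mass theorem on grid graphs''), so the standing hypothesis $R\ge 0$ is in force even though the corollary's statement does not repeat it. Under $R\ge 0$, each partial sum $\sum_{x\in Q_r}(R(x)+\Abs(x))$ is nonnegative and nondecreasing with limit $2^n n\,E(G)=0$, hence $R\equiv 0$ and $\Abs\equiv 0$, and then $w\equiv 1$ follows from $\Abs\equiv 0$ together with $w=1+o(1)$. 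Your ``main obstacle'' simply does not arise.

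By contrast, you are attempting to prove $w\equiv 1$ \emph{without} assuming $R\ge 0$, and you correctly recognize that extracting $\Abs\equiv 0$ and $R\equiv 0$ from $\sum_x(R(x)+\Abs(x))=0$ alone is not justified: once $R$ can change sign, cancellation is possible and the decay hypotheses you have do not obviously prevent it. Your sketch (``running the rigidity half of the proof of Theorem~\ref{thm:main1} but using the strong decay \ldots\ in place of the pointwise sign $R\ge 0$'') is not a proof, and you say as much. So either read the corollary as the paper does---with $R\ge 0$ implicit---in which case the rigidity is a one-line citation of Theorem~\ref{thm:main1}; or, if you want the stronger statement without $R\ge 0$, be aware that this is not what the paper proves, and your current outline has a genuine gap at exactly the point you flag.
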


Now we consider general asymptotically flat graphs without assuming the global grid structure. We prove a weaker version of the positive mass theorem by replacing the scalar curvature with the Ricci curvature.

	\begin{thm}\label{thm:mainric}
		Assume $G=(V,E,w)$ is an asymptotically flat graph with non-negative Ricci curvature, then $G$ has to be the standard grid graph.
		
	\end{thm}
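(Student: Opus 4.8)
The plan is to combine the structural result of Hua-Münch on graphs with non-negative Ollivier curvature—that such graphs (here, with the additional constraint of being asymptotically flat, hence containing a grid-like end) are ``salami-like'' in the sense of \cite{salami,salami2}—with a rigidity argument forcing the finite exceptional part $K$ to collapse. Concretely, I would first observe that an asymptotically flat graph $G$ has, outside a finite set $K$, the structure of $\Z^n\setminus Q_r$ with weights $w(x,y)=1+o(1)$; in particular $G$ has linear volume growth of degree $n$, finitely many ends when $n=1$ (a half-line), and for $n\geq 2$ exactly one end. The non-negative Ricci curvature assumption is global, so in particular it holds on all of $V$, including near $\partial K$.

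The key tool, as the abstract signals, is the existence of a harmonic function of linear growth adapted to the salami structure. For $G$ asymptotically flat with $n\geq 2$, the grid end gives, for each coordinate direction $i=1,\dots,n$, a function behaving like $x_i$ at infinity; I would show that non-negative Ricci curvature forces each such function to extend to a globally defined harmonic function $h_i$ on $V$ of linear growth, and moreover that the gradient $|\nabla h_i|$ is constant along edges (an equality case in a Bochner-type / curvature-gradient estimate, which on graphs with $\kappa\geq 0$ says $|\nabla h|$ is subharmonic or satisfies a monotonicity, forcing rigidity when growth is exactly linear). The tuple $(h_1,\dots,h_n)$ then defines a graph homomorphism $\Phi\colon V\to\Z^n$; non-negative curvature plus the Lipschitz/gradient-constancy conditions force $\Phi$ to be a local isomorphism, and since $G$ has one end matching $\Z^n$'s one end (for $n\geq 2$), a covering-space / end-counting argument makes $\Phi$ a bijection onto $\Z^n$. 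Finally, gradient constancy $|\nabla h_i|\equiv 1$ together with $\Abs\equiv $ (something controlled) forces $w\equiv 1$: the equality case in the scalar/Ricci curvature formula of Corollary~\ref{cor:scal}, evaluated using that $\sum_i h_i$-type combinations are harmonic everywhere, pins every edge weight to $1$. The case $n=1$ is separate and easier: an asymptotically flat graph with $n=1$ is eventually a weighted half-line, non-negative Ricci curvature on a line/tree forces all weights equal, and asymptotic flatness ($w\to 1$) then gives $w\equiv 1$, so $G\cong \Z$; one should check whether $K$ can be nonempty here and rule it out via the curvature condition at the ``attaching'' vertices.

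I would organize the steps as: (1) reduce to showing $G$ admits $n$ independent linear-growth harmonic functions with unit constant gradient; (2) prove existence of these via the salami/Liouville-type results of \cite{salami,salami2} applied to the asymptotically flat (hence one-ended, linear-volume-growth, $\kappa\geq 0$) graph, using the grid end to supply the boundary behavior at infinity; (3) extract rigidity—gradient constancy and orthogonality of the $h_i$ at each vertex—from the equality case of the curvature estimates; (4) assemble $\Phi=(h_1,\dots,h_n)$ and show it is a weighted isomorphism onto the standard $\Z^n$ using end-counting to get bijectivity and Corollary~\ref{cor:scal} to get $w\equiv 1$.

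The main obstacle I anticipate is step (3)–(4): producing the harmonic functions is essentially citable, but on graphs the ``equality case'' of a gradient estimate under $\kappa\geq 0$ is delicate—there is no clean Bochner formula, and one must argue directly with the limit-free formulation \eqref{limit-free} that a linear-growth harmonic function whose level sets have the ``right'' combinatorial size forces the local neighborhood of every vertex to be a standard grid cross. Handling the finite set $K$—showing no exotic combinatorial configuration can attach to a grid end while keeping $\kappa\geq 0$ everywhere (in particular at vertices of $\delta K$, where the condition $E(K,\Phi^{-1}(\Z^n\setminus Q_{r+1}))=\varnothing$ must be exploited)—is the other place where care is needed; this is precisely the discrete analog of ``the topology of $K$ is trivial'' in the classical rigidity statement.
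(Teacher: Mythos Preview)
Your high-level strategy---build $n$ harmonic coordinate functions $h_1,\dots,h_n$ from the salami theory, assemble them into a map $\hat h\colon V\to\Z^n$, and deduce rigidity---matches the paper's. But two of your key mechanisms do not work as stated.

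First, the salami input. A salami in \cite{salami,salami2} must have \emph{two} ends of infinite volume, whereas for $n\geq 2$ the asymptotically flat graph $G$ itself has a single end; so the theory does not apply to $G$ directly. The paper instead constructs, for each direction $i$, a tubular induced subgraph $C_R^i=K\cup\{x:|\Phi_j(x)|\leq R\ \forall j\neq i\}$ with $R\approx 4(r+1)$ chosen so that the induced Ollivier curvature on the tube is still nonnegative (this requires checking the boundary edges of the tube by hand). \emph{This} tube is two-ended, and the extremal Lipschitz extension there produces $h_i$; one then verifies $h_i=\Phi_i$ outside $K$ by a shortest-path argument. Your plan to apply the salami results directly to the one-ended $G$ is a genuine gap.

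Second, bijectivity of $\hat h$ via ``covering-space / end-counting'' cannot work: for $n\geq 2$ both $G$ and $\Z^n$ have one end, so end-counting gives no information. The paper resolves this in two explicit steps. To forbid \emph{diagonal} edges ($x\sim y$ with $\|\hat h(x)-\hat h(y)\|_1>1$) it builds a second family of rotated salamis along $e_i+e_j$ and shows the resulting harmonic function coincides with $h_i+h_j$ on $K$; its $1$-Lipschitz property then rules out a jump of $2$. To forbid \emph{multiple preimages} over a corner $a=(-r,\dots,-r)\in S_r$, it uses $\Delta h_i=0$ at the known outer neighbors $u_i=\Phi^{-1}(a-e_i)$ to get $\sum_t w(u_i,v_t)=1$, and then exhibits an explicit integer $1$-Lipschitz test function $f$ for which $\nabla_{u_jv_s}\Delta f=2\sum_{i\neq j}(w(v_s,u_i)-1)<0$ whenever there are $k\geq 2$ preimages---contradicting $\kappa\geq 0$. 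There is no Bochner or gradient-constancy argument; the rigidity comes from these hand-built test functions. All of this is organized as a layer-by-layer induction (peel $S_r$, reduce to $Q_{r-1}$), after a preliminary lemma forcing $w\equiv 1$ outside a finite set; treating $K$ in one shot as you propose would make the test-function step intractable because the outer neighbors would not yet have known unit weights. (Also: the paper's definition requires $n\geq 2$, so the $n=1$ case is not needed.)
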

	\begin{rem}
		\begin{enumerate}[(i)]
			\item For a Riemannian manifold with nonnegative Ricci curvature, a continuous analog of the above result follows from the Bishop-Gromov volume comparison and its rigidity. However, the polynomial volume growth and its rigidity are challenging open problems for the Ollivier curvature, which is of infinite-dimensional nature.
			\item This is a combinatorial rigidity result for asymptotically flat graphs. By the definition, the asymptotically flatness restricts only the combinatorial structure outside a finite part $K,$ and it's challenging to deduce the structure inside  $K.$ Our theorem implies that the combinatorial structure of an asymptotically flat graph is  determined by the non-negativity of the Ricci curvature. This reveals a rather special role of the grid graph in discrete spaces.
		\end{enumerate}
	\end{rem}

	To prove the theorem, we adopt the techniques introduced in \cite{salami,salami2}. In particular, we cut the asymptotically flat graph to obtain $n$ salamis, i.e. two-end graphs with non-negative Ricci curvature, see Section~\ref{sec:pmtg} for details. For each salami, it associates with an integer-valued harmonic function of linear growth. Then we assign coordinates to vertices in $K$ using these harmonic functions. The non-negativity of the Ricci curvature determines the combinatorial structure in $K$ by induction, layer-by-layer from the outside of $K$ to the inside of $K$, which also implies that the weights are trivial.


The paper is organized as follows:
in next section, we give the explicit curvature formulas for weighted grid graphs, and prove the positive mass theorem on them, Theorem~\ref{thm:main1}.
In Section~\ref{sec:pmtg}, we prove the positive mass theorem for asymptotically flat graphs with nonnegative Ricci curvatures, Theorem~\ref{thm:mainric}. Section~\ref{sec:tor} is devoted to the results on discrete tori. In Appendix, we include some counterexamples for the rigidity of the positive mass theorem for weighted graphs with varying vertex weights.
		
	\section{The positive mass theorem on grid graphs}\label{sec:grid}
	\subsection{Explicit formulas for the scalar curvature}
		For the Ricci curvature on grid graphs, we have the following formula.
		\begin{prop}\label{prop:ric}
			Let $\Z^n=(\Z^n,w)$ be a grid graph, $x\in \Z^n$. Assume that $y=x+e_n,$ then
			\begin{align*}
				\kappa(x,y)=&2w(x,y)-w(x,x-e_n)-w(y,y+e_n)\\
				&-\sum_{i=1}^{n-1}(|w(x,x+e_i)-w(y,y+e_i)|+|w(x,x-e_i)-w(y,y-e_i)|).
			\end{align*}
		
		\end{prop}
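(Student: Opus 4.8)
The plan is to compute the infimum in the limit-free formula \eqref{limit-free} directly, by constructing an explicit near-optimal competitor $f$ and then proving a matching lower bound. Since $y = x+e_n$, the optimal transport moves mass from the ball $B_1(x)$ towards $B_1(y)$, so the natural candidate is the signed-distance-type function built from the $n$-th coordinate: set $f(z) = -z_n$ after translating so that $x$ sits at the origin, extended to all of $B_1(x)\cup B_1(y)$. This $f$ lies in $\Lip(1)$ on the grid and satisfies $\nabla_{yx} f = f(y)-f(x) = -1\cdot(-1)=1$, so it is admissible. First I would evaluate $\nabla_{xy}\Delta f = \Delta f(x) - \Delta f(y)$ for this choice: because $f$ depends only on the $n$-th coordinate, $\Delta f(x) = w(x,x-e_n)\cdot(1) + w(x,x+e_n)\cdot(-1) + 0$ from the transversal directions, and similarly $\Delta f(y) = w(y,y-e_n)\cdot(1)+w(y,y+e_n)\cdot(-1)$; subtracting and using $w(x,x+e_n)=w(x,y)=w(y,y-e_n)$ gives exactly $2w(x,y) - w(x,x-e_n) - w(y,y+e_n)$. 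This is the first line of the claimed formula, so the linear part comes for free from the ``flat'' competitor.

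The point of the proof is that the true optimal $f$ must be cleverly perturbed in the transversal directions $e_1,\dots,e_{n-1}$ to gain (or is forced to lose) curvature, and this is where the sum of absolute values $-\sum_{i=1}^{n-1}(|w(x,x+e_i)-w(y,y+e_i)| + |w(x,x-e_i)-w(y,y-e_i)|)$ enters. The key observation is that at each transversal neighbour pair, say $x+e_i$ and $y+e_i$, one may independently decide whether to shift $f$ by $\pm 1$ on $x+e_i$ (keeping the $\Lip(1)$ and $\nabla_{yx}f=1$ constraints intact, since $x+e_i$ and $y+e_i$ are not on the geodesic between $x$ and $y$ and their mutual $\ell^1$-distance is $2$). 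A shift at $x+e_i$ contributes $\mp w(x,x+e_i)$ to $\Delta f(x)$ and a shift at $y+e_i$ contributes $\pm w(y,y+e_i)$ to $\Delta f(y)$; optimizing the sign to minimize $\Delta f(x)-\Delta f(y)$ over these $2(n-1)$ independent choices produces precisely $-|w(x,x+e_i)-w(y,y+e_i)|$ from the pair $\{x+e_i, y+e_i\}$ and $-|w(x,x-e_i)-w(y,y-e_i)|$ from the pair $\{x-e_i, y-e_i\}$. I would check the admissibility of these perturbed competitors carefully: the $\Lip(1)$ condition only needs verification on edges incident to the shifted vertices, and since each shift is by a single unit and the shifted vertices are pairwise non-adjacent for distinct $i$ (and within the $\pm e_i$ pair they are at distance $2$), nothing breaks. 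Taking the infimum over all sign patterns gives the upper bound $\kappa(x,y) \le$ RHS.

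For the matching lower bound I would argue that any admissible integer-valued $f \in \Lip(1)$ with $\nabla_{yx}f = 1$ must, by the Lipschitz constraint and the normalization, satisfy $f(x) - f(y) = 1$ and $|f(z) - f(x)| \le 1$, $|f(z')-f(y)|\le 1$ for neighbours; writing $f$ on the relevant $2n$-vertex neighbourhood and separating the $e_n$-contribution from the transversal contributions, the $e_n$-part is rigid and yields the linear terms exactly, while each transversal pair contributes at least $-|w(x,x\pm e_i)-w(y,y\pm e_i)|$ to $\nabla_{xy}\Delta f$ because the best one can do by choosing $f(x\pm e_i), f(y\pm e_i) \in \{f(x)-1,f(x),f(x)+1\}$ respectively is to align the signs optimally — a one-line case analysis on the three possible values. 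Summing over $i$ gives $\nabla_{xy}\Delta f \ge$ RHS for every admissible $f$, hence $\kappa(x,y)\ge$ RHS, completing the proof. The main obstacle I anticipate is not conceptual but bookkeeping: keeping the signs and the roles of $x+e_i$ versus $x-e_i$ straight, and making sure that the integer-valued reduction \eqref{limit-free} legitimately lets us restrict to shifts in $\{-1,0,1\}$ at the transversal vertices — i.e. that no admissible $f$ taking larger transversal values could do better, which follows because increasing $|f(x\pm e_i)-f(x)|$ beyond $1$ is forbidden by $\Lip(1)$ in the first place, so the constraint set is genuinely finite and the case analysis is exhaustive.
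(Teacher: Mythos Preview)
Your approach is the same as the paper's --- directly evaluating the limit-free formula \eqref{limit-free} by computing $\nabla_{xy}\Delta f$ over admissible $f$ --- but there is a genuine gap in the execution. You assert that $x+e_i$ and $y+e_i$ have ``mutual $\ell^1$-distance $2$'' and that therefore shifts at these vertices can be treated as independent choices when checking the $\Lip(1)$ constraint. This is false: since $y=x+e_n$, we have $(y+e_i)-(x+e_i)=e_n$, so $x+e_i\sim y+e_i$ in the grid and $d(x+e_i,y+e_i)=1$. Hence there is a binding Lipschitz constraint $|f(x+e_i)-f(y+e_i)|\le 1$ that couples the two transversal values. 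If the values were truly independent in $\{-1,0,1\}$, the minimum contribution from the pair would be $-w(x,x+e_i)-w(y,y+e_i)$, not $-|w(x,x+e_i)-w(y,y+e_i)|$; you happen to write down the correct answer but your justification does not produce it.

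The paper's proof fixes exactly this point: after normalizing $f(x)=0$, $f(y)=1$, it observes the constraint $|f(x+e_i)-f(y+e_i)|\le 1$ and shows that the infimum forces $f(y\pm e_i)=f(x\pm e_i)+1$. Then the transversal contribution to $\nabla_{xy}\Delta f$ collapses to $f(x+e_i)\bigl(w(x,x+e_i)-w(y,y+e_i)\bigr)$ with $f(x+e_i)\in\{-1,0,1\}$, and optimizing the single remaining parameter gives $-|w(x,x+e_i)-w(y,y+e_i)|$. This simultaneously handles the upper and lower bounds. Your lower-bound sketch (``a one-line case analysis on the three possible values'') likewise needs this coupling to be correct; without it the case analysis ranges over nine pairs rather than three, and the incorrect minimum reappears. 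A minor secondary issue: your base competitor $f(z)=-z_n$ has $\nabla_{yx}f=-1$, not $1$; the computation you actually perform is consistent with $f(z)=z_n$, so this is just a sign slip, but it should be cleaned up.
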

	
		The proof for the case of $n=2$ is given in \cite[Lemma 6.2]{salami}. The general case holds as follows.
			
		\begin{proof}
			By the definition (\ref{limit-free}) of the Ollivier curvature, we only need to consider $f\in \Lip(1)$ with $f:B_1(x)\cup B_1(y)\rightarrow \{-1,0,1,2\},$ $f(x)=0$ and $f(y)=1.$ By definitions,
			$$
			\begin{aligned}
				\Delta f(x)=&\sum_{i=1}^{n-1} \left(f(x+e_i)w(x,x+e_i)+f(x-e_i)w(x,x-e_i)\right)\\
				&+f(x-e_n)w(x,x-e_n)+w(x,y),
			\end{aligned}
			$$ 
			$$
			\begin{aligned}
				\Delta f(y)=&\sum_{i=1}^{n-1} \left((f(y+e_i)-1)w(y,y+e_i)+(f(y-e_i)-1)w(y,y-e_i)\right)\\
				&+(f(y+e_n)-1)w(y,y+e_n)-w(x,y).
			\end{aligned}
			$$ 
			Note that we need the constraint that $|f(x+e_i)-f(y+e_i)|\le 1$ to guarantee the Lipschitz condition. To attain the infimum in (\ref{limit-free}), we shall require 
			$$f(x-e_n)=-1,\quad f(y+e_n)=2,$$ 
			$$f(y+e_i)=f(x+e_i)+1,\quad f(y-e_i)=f(x-e_i)+1.$$
			Then,
			\begin{align*}
			\nabla_{xy}\Delta f=&2w(x,y)-w(x,x-e_n)-w(y,y+e_n)\\
			&+\sum_{i=1}^{n-1}\left(f(x+e_i)(w(x,x+e_i)-w(y,y+e_i))\right.\\
			&\qquad \left.+f(x-e_i)(w(x,x-e_i)-w(y,y-e_i))\right).
			\end{align*}
			
			As $f(x\pm e_i)\in \{-1,0,1\}$ by the Lipschitz condition, we derive the conclusion by assigning 
			$$f(x+e_i)=-\sgn\left(w(x,x+e_i)-w(y,y+e_i)\right),$$
			$$f(x-e_i)=-\sgn\left(w(x,x-e_i)-w(y,y-e_i)\right)$$
			for $i=1,\cdots,n-1.$
		\end{proof}
		
		Summing up the Ricci curvatures on edges incident to $x,$ we conclude the following corollary.
		
		\begin{cor}\label{cor:scal}
			Let $\Z^n=(\Z^n,w)$ be a grid graph, $x\in \Z^n$. We have the following formula for scalar curvature,
			\begin{small}
				\begin{align*}
					R(x)=&\sum_{i=1}^n(w(x,x_i^+)-w(x_i^+,x+e_i)+w(x,x_i^-)-w(x_i^-,x_i^--e_i))\\
					&-\sum_{\begin{subarray}{c}
							i,j=1\\
							i\neq j
					\end{subarray}}^n\left(|w(x,x+e_i)-w(x_j^+,x^+_j+e_i)|+|w(x,x-e_i)-w(x_j^+,x_j^+-e_i)|\right.\\
					&\qquad+\left.|w(x,x+e_i)-w(x_j^-,x_j^-+e_i)|+|w(x,x-e_i)-w(x_j^-,x^-_j-e_i)|\right),
				\end{align*}
			\end{small}
			where $x_j^+:=x+e_j,$ $x_j^-=x-e_j.$
			
		\end{cor}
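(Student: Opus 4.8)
The plan is to deduce the scalar curvature formula directly from Proposition~\ref{prop:ric} by summing over the $2n$ edges incident to $x$. There are $2n$ neighbours of $x$, namely $x_i^\pm = x \pm e_i$ for $i=1,\dots,n$, so $R(x) = \sum_{i=1}^n \bigl(\kappa(x, x_i^+) + \kappa(x, x_i^-)\bigr)$. The proposition gives $\kappa$ for an edge in the $e_n$-direction, but by the obvious symmetry of the grid graph (permuting coordinate axes and reflecting) the same formula holds for an edge in any direction $e_j$, with the two ``endpoint'' terms being the weights of the collinear edges extending the edge past each endpoint, and the sum of absolute-value terms running over the $n-1$ transverse directions $i\neq j$.

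The first step is to write out $\kappa(x, x_j^+)$ explicitly. Applying Proposition~\ref{prop:ric} with the roles $x \leftrightarrow x$, $y \leftrightarrow x_j^+ = x+e_j$: the leading term is $2w(x,x+e_j)$; the two endpoint terms are $-w(x,x-e_j)$ (the edge collinear with $\{x,x+e_j\}$ on the $x$-side) and $-w(x+e_j, x+2e_j) = -w(x_j^+, x_j^+ + e_j)$ (on the $y$-side); and the transverse sum is $-\sum_{i\neq j}\bigl(|w(x,x+e_i) - w(x_j^+, x_j^+ + e_i)| + |w(x,x-e_i) - w(x_j^+, x_j^+ - e_i)|\bigr)$. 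Similarly $\kappa(x, x_j^-)$ has leading term $2w(x,x-e_j)$, endpoint terms $-w(x, x+e_j)$ and $-w(x_j^-, x_j^- - e_j)$, and the analogous transverse sum with $x_j^+$ replaced by $x_j^-$.

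The second step is to sum these $2n$ expressions and collect the linear-in-$w$ part. For a fixed $j$, the contribution of $\kappa(x,x_j^+) + \kappa(x,x_j^-)$ to the linear terms is $2w(x,x_j^+) - w(x,x_j^-) - w(x_j^+, x+e_j) + 2w(x,x_j^-) - w(x,x_j^+) - w(x_j^-, x_j^- - e_j) = w(x,x_j^+) + w(x,x_j^-) - w(x_j^+, x+e_j) - w(x_j^-, x_j^- - e_j)$, and summing over $j=1,\dots,n$ gives exactly the first line of the claimed formula. For the absolute-value part, summing over $j$ the transverse sums from both $\kappa(x,x_j^+)$ and $\kappa(x,x_j^-)$ produces, for each ordered pair $(i,j)$ with $i\neq j$, the four terms $|w(x,x+e_i) - w(x_j^+, x_j^+ + e_i)|$, $|w(x,x-e_i) - w(x_j^+, x_j^+ - e_i)|$, $|w(x,x+e_i) - w(x_j^-, x_j^- + e_i)|$, $|w(x,x-e_i) - w(x_j^-, x_j^- - e_i)|$, matching the second and third lines exactly. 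This completes the derivation.

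There is no serious obstacle here; the only thing requiring care is bookkeeping, namely making sure that the endpoint weight terms (which involve edges collinear with the edge under consideration) are not double-counted or confused with the transverse absolute-value terms, and that one correctly invokes the axis-permutation symmetry of Proposition~\ref{prop:ric} rather than re-deriving $\kappa$ for each direction. One small consistency check worth recording: in the standard grid graph $w\equiv 1$ all terms vanish, giving $R\equiv 0$, as expected.
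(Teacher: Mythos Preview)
Your proposal is correct and follows exactly the approach the paper indicates: the paper's entire proof is the single sentence ``Summing up the Ricci curvatures on edges incident to $x$, we conclude the following corollary,'' and you have carried out precisely that summation, correctly invoking the coordinate-permutation and reflection symmetry to apply Proposition~\ref{prop:ric} in each direction and collecting the linear and absolute-value terms.
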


		Recalling the definitions in Section \ref{sec:intro}, the scalar curvature of $x$ can be written as
		$$
		R(x)=\sum_{e\in E_0(x)}w(e)-\sum_{\tau\in \wt{E}_0(x)}w(\tau)-\Abs(x),
		$$
		where $\Abs(x)\ge 0$ is the summation of absolute terms in Corollary \ref{cor:scal}. It measures the difference between weights of parallel edges surrounding $x.$ Figure \ref{R(x)} is an illustration for this formula.
				
		\begin{figure}[htbp]
			\centering
			\definecolor{qqffqq}{rgb}{0,1,0}
			\definecolor{qqqqff}{rgb}{0,0,1}
			\definecolor{ffqqqq}{rgb}{1,0,0}
			\definecolor{ududff}{rgb}{0.3,0.3,1}
			\definecolor{cqcqcq}{rgb}{0.75,0.75,0.75}
			\begin{tikzpicture}[line cap=round,line join=round,>=triangle 45,x=1cm,y=1cm,scale=2]
				\draw [color=cqcqcq,, xstep=0.5cm,ystep=0.5cm] (0.75,0.9) grid (3.25,3.1);
				\clip(0.5,0.9) rectangle (3.5,3.1);
				\draw [line width=3pt,color=ffqqqq] (1.5,2)-- (2,2);
				\draw [line width=3pt,color=ffqqqq] (2,2)-- (2,2.5);
				\draw [line width=3pt,color=qqqqff] (2,2.5)-- (2,3);
				\draw [line width=3pt,color=ffqqqq] (2,2)-- (2.5,2);
				\draw [line width=3pt,color=qqqqff] (2.5,2)-- (3,2);
				\draw [line width=3pt,color=qqqqff] (1.5,2)-- (1,2);
				\draw [line width=3pt,color=ffqqqq] (2,2)-- (2,1.5);
				\draw [line width=3pt,color=qqqqff] (2,1.5)-- (2,1);
				\draw [line width=3pt,color=qqffqq] (1.5,2.5)-- (2.5,2.5);
				\draw [line width=3pt,color=qqffqq] (2.5,2.5)-- (2.5,1.5);
				\draw [line width=3pt,color=qqffqq] (2.5,1.5)-- (1.5,1.5);
				\draw [line width=3pt,color=qqffqq] (1.5,1.5)-- (1.5,2.5);
				\begin{scriptsize}
					\draw [fill=ududff] (1.5,2) circle (1pt);
					\draw [fill=ududff] (2,2) circle (1pt);
					\draw[color=black] (2.1,2.1) node {\normalsize $x$};
					\draw [fill=ududff] (2,2.5) circle (1pt);
					\draw [fill=ududff] (2,3) circle (1pt);
					\draw [fill=ududff] (2.5,2) circle (1pt);
					\draw [fill=ududff] (3,2) circle (1pt);
					\draw [fill=ududff] (1,2) circle (1pt);
					\draw [fill=ududff] (2,1.5) circle (1pt);
					\draw [fill=ududff] (2,1) circle (1pt);
					\draw [fill=ududff] (1.5,2.5) circle (1pt);
					\draw [fill=ududff] (2.5,2.5) circle (1pt);
					\draw [fill=ududff] (2.5,1.5) circle (1pt);
					\draw [fill=ududff] (1.5,1.5) circle (1pt);
				\end{scriptsize}
			\end{tikzpicture}
			\caption{Illustration for the scalar curvature formula $R(x)$ when $n=2$. Red edges indicate the plus terms and blue edges indicate the minus terms. Green and red edges are involved in the absolute terms.}
			\label{R(x)}
		\end{figure}
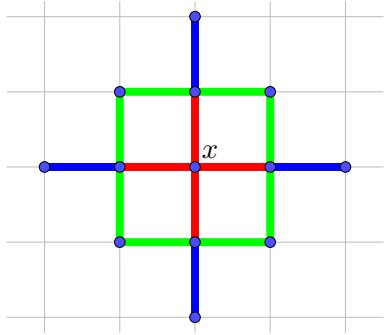
		
		More generally, we have the following expression for the summation of scalar curvatures over a cube.
		\begin{align}
		\sum_{x\in Q_r}R(x)=\sum_{e\in E_r}w(e)-\sum_{\tau \in \wt{E}_r}w(f)-\sum_{x\in Q_r}\Abs(x).\label{key}
		\end{align}		
		
		\subsection{Proof of Theorem~\ref{thm:main1}}
		In this subsection, we prove the positive mass theorem on grid graphs, Theorem~\ref{thm:main1}.
		We first verify that the ADM mass is well-defined and finite. Since the graph is asymptotically flat, both $R(x)$ and $\Abs(x)$ are summable. Then
			\begin{align*}
			\sum_{e\in E_r}w(e)-\sum_{\tau\in \wt{E}_r}w(\tau)=\sum_{x\in Q_r}\Abs(x)+R(x)\ge 0
			\end{align*}
			converges to a non-negative value when $r\rightarrow \infty.$ This proves the first part of the theorem, $E(G)\ge 0$.
			
			For the rigidity part of the theorem, assume $E(G)=0.$ Note that $$\sum_{x\in Q_r} \Abs(x)+R(x)$$ is non-negative, and monotonely increasing with respect to $r,$ whose limit is zero as $r\to\infty.$ This gives $\Abs(x)=R(x)=0$ for all vertices $x\in G.$ As $w(x,y)=1+o(1),$ we conclude that $w\equiv 1$ by the definition of $\Abs(x).$ This finishes the proof of the theorem.
			\qed
	
		We remark that the proof depends on the specific combinatorial structure of the grid graph. In continuous cases, if the topological structure of an asymptotically flat manifold is simple, one may also prove the theorem directly. For instance, \cite{graph} proves the case when the manifold is the graph of a function.
		
		\subsection{Some results and examples}
		In this subsection, we provide several corollaries of the main theorem, which indicate the properness of our definitions for the discrete ADM mass.
		The following corollary is immediate from Theorem~\ref{thm:main1}. It will be extended to general graphs with non-negative Ricci curvature in Theorem~\ref{thm:mainric}.
		\begin{cor}
			If the grid graph $G$ has trivial weights $1$ outside a finite set $K$ with $R\ge 0,$ then $G$ has to be a standard grid graph.\label{cor}
		\end{cor}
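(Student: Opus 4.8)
The plan is to show that such a graph is an asymptotically flat grid graph of vanishing discrete ADM mass, and then to invoke the rigidity part of Theorem~\ref{thm:main1}.

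First I would check asymptotic flatness. Since $G=(\Z^n,w)$ has $w\equiv 1$ on every edge not incident to the finite set $K$, the scalar curvature formula of Corollary~\ref{cor:scal} together with the definition of $\Abs$ shows that $R(x)=0$ and $\Abs(x)=0$ whenever $K$ meets no vertex within combinatorial distance $2$ of $x$; in particular this holds for all $x$ outside a sufficiently large cube. Hence $w(x,y)=1+o(1)$ as $|x|\to\infty$, while $\Abs(x)$ and $|R(x)|$ are eventually identically zero and so are $O(|x|^{-p})$ for every $p>n\ge 2$. Thus $G$ is an asymptotically flat grid graph which, by the hypothesis $R\ge 0$, satisfies the assumptions of Theorem~\ref{thm:main1}.

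Next I would compute $E(G)$. Fix $r_0$ with $K\subset Q_{r_0}$. For $r>r_0$, every edge in $E_r=\partial Q_r$ and every edge in $\wt{E}_r$ has both endpoints of $l^\infty$-norm at least $r>r_0$, hence lies outside $K$ and carries weight $1$; therefore $\sum_{e\in E_r}w(e)-\sum_{\tau\in\wt{E}_r}w(\tau)=|E_r|-|\wt{E}_r|$. As $E_r$ and $\wt{E}_r$ depend only on the combinatorial structure of $\Z^n$ and not on $w$, I may evaluate this difference on the standard grid graph, where $R\equiv 0$ and $\Abs\equiv 0$; then \eqref{key} becomes $0=|E_r|-|\wt{E}_r|$. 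Hence $\sum_{e\in E_r}w(e)-\sum_{\tau\in\wt{E}_r}w(\tau)=0$ for all $r>r_0$, and so $E(G)=0$.

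Finally, since $G$ is an asymptotically flat grid graph with $R\ge 0$ and $E(G)=0$, the rigidity conclusion of Theorem~\ref{thm:main1} yields $w\equiv 1$, i.e. $G$ is a standard grid graph. I do not anticipate a genuine obstacle: the whole argument is bookkeeping around Theorem~\ref{thm:main1}, and the only point needing a word of justification is the combinatorial identity $|E_r|=|\wt{E}_r|$, which falls out of \eqref{key} applied to the trivial weighting. Alternatively one can avoid Theorem~\ref{thm:main1} altogether: the same computation gives $\sum_{x\in Q_r}(R(x)+\Abs(x))=|E_r|-|\wt{E}_r|=0$ for all $r>r_0$, whence $\Abs\equiv 0$ since every summand is non-negative, and then $w\equiv 1$ follows from $w=1+o(1)$ exactly as in the proof of Theorem~\ref{thm:main1}.
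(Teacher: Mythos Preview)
Your proposal is correct and follows the same route as the paper, which simply records the corollary as ``immediate from Theorem~\ref{thm:main1}'' without further argument. You have just spelled out the bookkeeping---verifying asymptotic flatness, computing $E(G)=0$ via $|E_r|=|\wt{E}_r|$, and invoking rigidity---so there is nothing to add.
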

		
		In the following, we prove Corollary~\ref{coro:strongdecay}.	
		\begin{proof}[Proof of Corollary~\ref{coro:strongdecay}]
			By the definition of the energy
			$$
			\left|\sum_{e\in E_r}w(e)-\sum_{\tau\in \wt{E}_r}w(\tau)\right|\le 2n(2r+1)^{n-1}Cr^{-p-1}\rightarrow 0.
			$$
			Therefore $E(G)=0,$ and then we have $w\equiv 1.$
		\end{proof}
		
	Note that Corollary \ref{coro:strongdecay} is similar to the result in continuous case \cite[Theorem 3 (ii)]{mass}, where the condition about $|w(x,y)-w(y,z)|$ is to mimic the restriction on $g_{ij,k}.$ The corollary implies that the energy is contained in the term $|x|^{-(n-2)}$. This can be seen more clearly in next two results.
		
		\begin{cor}
			When $n=2,$ there doesn't exist an asymptotically flat graph $G$ with $E(G)>0.$ The only asymptotically flat graph with non-negative scalar curvature in $2$-dimension is the standard grid graph.
		\end{cor}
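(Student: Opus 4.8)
The plan is to prove the stronger statement that $E(G)=0$ for every asymptotically flat graph with $n=2$; the first assertion is then immediate and the second follows from the rigidity in Theorem~\ref{thm:main1}. The engine is a decay estimate: asymptotic flatness forces the quantitative bound $w(x,y)=1+O(|x|^{1-p})$. Indeed, for $i\neq j$ the term $|w(x,x+e_{i})-w(x+e_{j},x+e_{j}+e_{i})|$ occurs among the summands of $\Abs(x)$, hence is $O(|x|^{-p})$; telescoping along the ray $x,\,x+e_{j},\,x+2e_{j},\dots$, along which $w(x+ke_{j},x+ke_{j}+e_{i})\to 1$ as $k\to\infty$ by $w=1+o(1)$, gives
\[
|w(x,x+e_{i})-1|\ \le\ \sum_{k\ge 0}\Abs(x+ke_{j})\ =\ O(|x|^{1-p}),
\]
the series converging since $p>1$. (Note that $\Abs$ controls only the variation of an edge weight \emph{transverse} to its direction, so the telescoping must be carried out along a ray transverse to the edge; such a direction exists whenever $n\ge 2$.)

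Next I would estimate the defining limit of the mass. A direct count gives $\#E_{r}=\#\wt{E}_{r}=2n(2r+1)^{n-1}$: each of the $2n$ faces of the cube $Q_{r}$, respectively each of the $2n$ faces of the shell $\delta Q_{r}$, contributes $(2r+1)^{n-1}$ edges pointing straight out. For $n=2$ this is $8r+4=O(r)$, and every edge occurring in $E_{r}\cup\wt{E}_{r}$ joins vertices of Euclidean norm $\ge r$. Since $\#E_{r}=\#\wt{E}_{r}$ we may replace each weight $w$ by $w-1$ and estimate
\[
\Bigl|\sum_{e\in E_{r}}w(e)-\sum_{\tau\in\wt{E}_{r}}w(\tau)\Bigr|
\ =\ \Bigl|\sum_{e\in E_{r}}\bigl(w(e)-1\bigr)-\sum_{\tau\in\wt{E}_{r}}\bigl(w(\tau)-1\bigr)\Bigr|
\ \le\ \bigl(\#E_{r}+\#\wt{E}_{r}\bigr)\cdot O(r^{1-p})\ =\ O(r^{\,2-p}),
\]
which tends to $0$ since $p>n=2$. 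Hence $E(G)=\frac{1}{8}\lim_{r\to\infty}\bigl(\sum_{e\in E_{r}}w(e)-\sum_{\tau\in\wt{E}_{r}}w(\tau)\bigr)=0$; in particular no asymptotically flat graph with $n=2$ has positive mass.

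Finally, suppose in addition that $R\ge 0$ everywhere. For a grid graph, \eqref{key} gives $8E(G)=\lim_{r\to\infty}\sum_{x\in Q_{r}}\bigl(R(x)+\Abs(x)\bigr)$, an increasing limit of non-negative partial sums that we have just shown equals $0$; hence $R(x)=\Abs(x)=0$ for every $x$. Then $\Abs\equiv 0$ makes each weight $w(x,x+e_{i})$ invariant under the translations $x\mapsto x\pm e_{j}$ with $j\neq i$, hence equal to a function $h_{i}(x_{i})$ of the $i$-th coordinate alone; substituting into $R\equiv 0$ gives $\sum_{i}\bigl(h_{i}(x_{i})+h_{i}(x_{i}-1)-h_{i}(x_{i}+1)-h_{i}(x_{i}-2)\bigr)=0$ for all $x$, and separating the coordinates and using $h_{i}\to 1$ forces each $h_{i}\equiv 1$, so $w\equiv 1$; equivalently, once $E(G)=0$ is known one may simply invoke the rigidity in Theorem~\ref{thm:main1}. (For a general asymptotically flat graph the conclusion follows by combining $E(G)=0$ with the layer-by-layer argument in the proof of Theorem~\ref{thm:mainric}.) The step demanding the most care is the first one: extracting the quantitative decay $w(x,y)=1+O(|x|^{1-p})$ from the hypothesis $\Abs(x)=O(|x|^{-p})$, and in particular carrying out the telescoping in a transverse direction — after that, everything is dominated by $\#E_{r}\cdot O(r^{1-p})=O(r^{2-p})\to 0$.
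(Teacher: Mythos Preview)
Your proof is correct and in fact establishes the stronger statement $E(G)=0$ (not merely $E(G)\le 0$) for every asymptotically flat graph with $n=2$. The route is genuinely different from the paper's. You telescope the bound on $\Abs$ along a transverse ray \emph{all the way to infinity}, extracting the pointwise decay $|w(e)-1|=O(r^{1-p})$ for each $e\in E_r\cup\wt E_r$; then the crude estimate $(\#E_r+\#\wt E_r)\cdot O(r^{1-p})=O(r^{2-p})\to 0$ finishes at once. The paper instead telescopes only \emph{finitely} many steps to compare $\sum_{\wt E_r}w$ with $\sum_{E_{r+1}}w$, rewrites the mass defect as $4(2r+1)(\bar w(r)-\bar w(r+1))+o(r^{-1})$ with $\bar w(r)$ the average weight over $E_r$, and then argues by contradiction: $E(G)>0$ would force $\bar w(r)-\bar w(r+1)\ge c/r$ eventually, which is incompatible with $\bar w(r)\to 1$ since the harmonic series diverges. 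Your argument uses the full quantitative hypothesis $p>2$, while the paper's needs only the qualitative $\Abs(x)=o(|x|^{-2})$ together with $w=1+o(1)$; on the other hand your estimate extends verbatim to all $n\ge 2$, giving $|\sum_{E_r}w-\sum_{\wt E_r}w|=O(r^{\,n-p})\to 0$, so that under the paper's definition of asymptotic flatness one always has $E(G)=0$ --- a consequence the paper does not record.

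One small correction: the parenthetical in your final paragraph, invoking Theorem~\ref{thm:mainric} to treat a non-grid $K$ under $R\ge 0$, does not apply, since that theorem assumes non-negative \emph{Ricci} curvature rather than scalar. The second sentence of the corollary should be read (as its placement in the paper suggests) for grid graphs, where your appeal to the rigidity in Theorem~\ref{thm:main1} is exactly right; the detour through $R\equiv 0$ and the functions $h_i$ is correct but unnecessary, since $\Abs\equiv 0$ together with $w=1+o(1)$ already forces $w\equiv 1$.
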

	
		\begin{proof}
			Denote the average of weights in $E_r$ by
			$$
			\cl w(r)=\frac{1}{\left|E_r\right|}\sum_{e\in E_r} w(e)=1+o(1).
			$$
			
			We first estimate $\sum_{e\in \wt E_{r}} w(e)$ by $\cl w(r+1).$ Assume $(x,x+e_i)\in \wt E_r.$ We take $j\neq i,$ and denote $x+le_j$ by $x_l.$ As $\Abs(x)=o(|x|^{-2}),$ for $k\in \Z_+,$
			\begin{small}
				$$
				\begin{aligned}
					|w(x,x+e_i)-w(x_k,x_k+e_i)|&\le \sum_{l=1}^k |w(x_{l-1},x_{l-1}+e_i)-w(x_l,x_l+e_i)|\le ko(|x|^{-2}).
				\end{aligned}
				$$
			\end{small}

			The differences between $E_{r+1}$ and $\wt E_r$ are only in the corner. For $n=2,$ by the above estimate, we have
			$$
			\begin{aligned}
				\left|\frac{2r+3}{2r+1}\sum_{e\in \wt E_r} w(e)-\sum_{e\in E_{r+1}} w(e)\right|&\le \left|\frac{2}{2r+1}\sum_{e\in \wt{E}_r}w(e)-\sum_{e\in E_{r+1}\setminus \wt{E}_r} w(e)\right|\\
				&\le \frac{4n}{2r+1}\frac{(1+2r+1)(2r+1)}{2}o(r^{-2})\\
				&=o(r^{-1}),
			\end{aligned}
			$$
			$$
			\begin{aligned}
				\sum_{e\in\wt E_r} w(e)&=\frac{2r+1}{2r+3}\cdot 2n(2r+3)\cl w(r+1)+o(r^{-1})\\
				&=4(2r+1)\cl w(r+1)+o(r^{-1}).
			\end{aligned}
			$$
			
			Now we have
			$$
			\begin{aligned}
				\sum_{e\in E_r}w(e)-\sum_{\tau \in \wt{E}_r}w(\tau)=4(2r+1)(\cl w(r)-\cl w(r+1))+o(r^{-1}). 
			\end{aligned}
			$$
			
			Assume $E(G)>0,$ then 
			$$\lim_{r\rightarrow \infty }r(\cl w(r)-\cl w(r+1))=C>0,\quad \cl w(r+1)-\cl w(r)\le -\frac{C}{2r}$$ 
			for $r$ sufficiently large. It contradicts $\cl w(r)=1+o(1),$ as $\frac{1}{r}$ is not summable.
		\end{proof}
		
		\begin{example}
			In continuous cases, the model space for $E(g)=m$ is 
			$$(M^n,g)=\left(\R^n\setminus {B}_{|m|^{\frac{1}{n-2}}}, \left(1+\frac{m}{|x|^{n-2}}\right)^{\frac{4}{n-2}}\delta\right)$$
			with $R\equiv 0,$ which is called the \textit{Schwarzschild manifold}. We try to give some model graph for $E(G)=m.$ 
			
			Consider the grid graph. We only need to define weights for edges in $E_r.$ We can copy the weights to other edges by restricting $\Abs(x)=0$ everywhere. Similar to continuous cases, we define $$w(e)=\left(1+\frac{m}{(r+1)^{n-2}}\right)^{\frac{1}{n-2}},\quad e\in E_r$$
			for $n\ge 3.$ Then 
			$$
			\sum_{e\in E_r} w(e)-\sum_{\tau\in \wt{E}_r} w(\tau)\sim 2n(2r+1)^{n-1}\frac{1}{n-2}\left(\frac{m}{r^{n-2}}-\frac{m}{(r+1)^{n-2}}\right)\rightarrow 2^nnm,
			$$
			so $E(G)=m.$ Unfortunately, scalar curvatures in this graph are not non-negative everywhere. For $x=(x_1,\cdots,x_n)$ with all components sufficiently large,
			\begin{align*}
			R(x)=&\sum_{i=1}^n \left(\left(1+\frac{m}{|x_i|^{n-2}}\right)^{\frac{1}{n-2}}+\left(1+\frac{m}{(|x_i|+1)^{n-2}}\right)^{\frac{1}{n-2}}\right.\\
			&\left.-\left(1+\frac{m}{(|x_i|-1)^{n-2}}\right)^{\frac{1}{n-2}}-\left(1+\frac{m}{(|x_i|+2)^{n-2}}\right)^{\frac{1}{n-2}}\right).
			\end{align*}
			As $\left(1+\frac{m}{(r+1)^{n-2}}\right)^{\frac{1}{n-2}}$ is convex for $r$ large enough, it's easy to see that $R(x)<0$. 
			
			For $n=2,$ we have already shown that $E(G)=0$ for asymptotically flat grid graphs. But if we formally take $w(e)=1-m\log r$, then 
			$$
			\sum_{e\in E_r} w(e)-\sum_{f\in \wt{E}_r} w(f)=4(2r+1)m\log\left(1+\frac{1}{r}\right)\rightarrow 8m,
			$$
			so $E(G)=m.$ However, it violates the conditions $w\ge 0$ and $w=1+o(1)$ when $m\neq 0.$

		\end{example}
	
	\section{The positive mass theorem on general graphs}\label{sec:pmtg}
		In this section, we prove the positive mass theorem on general asymptotically flat graphs under Ricci curvature condition, Theorem~\ref{thm:mainric}.

		First we introduce the results on salamis, which will play important roles in the proof. 
		
		\subsection{Salami}   
			\begin{defi}\textnormal{\cite{salami,salami2}}
				A salami is a connected and locally finite graph $G=(V,E,w)$ equipped with vertex weights $m:V\to (0,\infty),$ which has non-negative Ollivier curvature, and has at least two ends of infinite $m$-volumes. \label{def:salami}
			\end{defi}
			
			For a salami, one can always find a \textit{salami partition} $P=(X,Y,K)$, such that
			\begin{enumerate}[(i)]
				\item $X\sqcup Y\sqcup K=V,$ where $\sqcup$ denotes the disjoint union;
				\item $K$ is finite,
				\item $E(X,Y)=\emp,$
				\item $m(X),m(Y)=\infty.$
			\end{enumerate}
			In addition, we call a salami partition connected if $K$ is connected.
			
			By \cite[Theorem 1.1, Theorem 1.3]{salami}, any salami has exactly two ends, and it's quasi-isometric to the line provided that the weights are non-degenerate. In this paper, we always assume $m\equiv 1.$
				
			We mainly refer to the \textit{extremal Lipschitz extension} method used in that paper. For any $\Omega\subset V,$ we denote by  
			$$\Lip(1,K):=\{f:V\rightarrow \R\:|\:f(y)-f(x)\le d(x,y),\foa x,y\in K\}$$
			the set of $1$-Lipschitz functions on $\Omega.$
			
			\begin{defi}
				Let $P=(X,Y,K)$ be a salami partition. For $f\in \Lip(1,K),$ we set 
				$$
				S(P)f(v):=Sf(v):=\begin{cases}
					f(v),&v\in K,\\
					\sup_{w\in K}f(w)-d(v,w),&v\in X,\\
					\inf_{w\in K}f(w)+d(v,w),&v\in Y.
				\end{cases}
				$$
				$$
				\FF:=\FF(P):=S(P)(\Lip(1,K)), \quad \HH:=\HH(P):=\{f\in \FF(P):\Delta f=0\}.
				$$
				
			\end{defi}
			
			Note that $S(P)f\in \Lip(1,V).$ We have the following lemmas in \cite[Lemma 3.8, 3.11-12]{salami}.
			
			\begin{lemma}
				Let $G$ be a salami with connected salami partition $P = (X,Y,K)$. Suppose
				$\Delta f = 0$ on $K$ for some $f\in \FF$. Then $\Delta f = 0$ everywhere. In particular, $\HH(P)\neq \emp.$\label{laplace}
				
			\end{lemma}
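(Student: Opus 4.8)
The plan is to exploit a maximum-principle argument combined with the two-ended structure of a salami. The key observation is that a function $f\in\FF$ that is harmonic on $K$ cannot have interior extrema on $X$ or $Y$ unless it is locally constant along the relevant shortest-path structure, and then to propagate harmonicity outward from $K$ using the explicit $\sup$/$\inf$ formulas defining $S(P)$.

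First I would show $\Delta f\le 0$ on $X$ and $\Delta f\ge 0$ on $Y$ (superharmonicity on $X$, subharmonicity on $Y$). This is where the formula $f(v)=\sup_{w\in K}f(w)-d(v,w)$ for $v\in X$ is used: for any neighbour $u\sim v$ with $u\in X$ one has $d(u,w)\le d(v,w)+1$, hence $f(u)\ge \sup_w(f(w)-d(v,w))-1 = f(v)-1$, and more importantly, picking $w_0\in K$ attaining the supremum at $v$, any shortest path from $v$ to $w_0$ has its first step to some neighbour $v'$ with $f(v')=f(v)+1$; combining with $f(u)\ge f(v)-1$ for all other neighbours and the Lipschitz bound $f(u)\le f(v)+1$, a weighted average argument shows $\Delta f(v)=\sum_{u\sim v}w(v,u)(f(u)-f(v))$. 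Here one must be slightly careful since not every neighbour steps ``toward'' $w_0$; the clean statement is that $f$ restricted to $X$ is itself an extremal Lipschitz extension and such extensions are always superharmonic off $K$. Symmetrically $f$ is subharmonic on $Y$.

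Next, using $\Delta f=0$ on $K$ together with $\Delta f\le 0$ on $X$, I would run a global argument: consider the function $f$ on the salami, which is $1$-Lipschitz hence of at most linear growth, and use that the salami has exactly two ends of infinite volume (by \cite[Theorem 1.1]{salami}) with $m\equiv 1$. A nonconstant bounded-energy-type or Harnack-type comparison, or more elementarily an exhaustion argument summing $\Delta f$ over large balls, forces $\Delta f\equiv 0$: since $\sum_{v}\Delta f(v)$ over any finite set telescopes to a boundary flux term, and $\Delta f$ has a fixed sign on each of $X$ and $Y$ while vanishing on the finite set $K$, the only way the fluxes can balance at infinity through a two-ended graph is if $\Delta f=0$ identically. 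The connectedness of $K$ in the salami partition is what guarantees that $X$ and $Y$ are genuinely the ``two sides'' and that no flux leaks through $K$ incorrectly.

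The main obstacle I expect is making the flux/exhaustion argument rigorous on an infinite graph where $f$ is merely Lipschitz (so $\Delta f$ is summable is not automatic): one needs to control the boundary terms $\sum_{\partial B_r}$, which requires the quasi-isometry-to-a-line structure from \cite[Theorem 1.3]{salami} to bound sphere sizes, or alternatively a direct argument that a sign-definite superharmonic function on a two-ended graph which is harmonic on a finite set must be harmonic. I would handle this by invoking the structural results already cited: the salami is quasi-isometric to $\Z$, so spheres have bounded size, fluxes are controlled, and the telescoping forces $\Delta f\equiv0$. The final assertion $\HH(P)\neq\emp$ then follows by applying this to any extremal extension $S(P)h$ of a suitably chosen $h\in\Lip(1,K)$ — for instance one whose Laplacian on the finite set $K$ can be corrected to zero, whose existence is a finite-dimensional linear-algebra statement — giving a nonempty $\HH(P)$.
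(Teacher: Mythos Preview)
The paper does not prove this lemma; it is quoted from \cite[Lemmas~3.8, 3.11--12]{salami}. That said, your outline has a genuine gap in Step~1: you never invoke the non-negative Ollivier curvature, which is part of the definition of a salami and is the essential ingredient here. Your assertion that extremal Lipschitz extensions are ``always superharmonic off $K$'' is false without a curvature hypothesis --- the $\sup$/$\inf$ formula alone puts no sign on $\Delta f$ (a vertex in $X$ can have one neighbour on a geodesic toward $K$ and many neighbours with $f$-value $f(v)-1$, with arbitrary edge weights). The correct mechanism is that for any $f\in\Lip(1)$ with $f(y)-f(x)=1$ for $x\sim y$, the definition \eqref{limit-freeR} gives $\Delta f(x)-\Delta f(y)\ge\kappa(x,y)\ge 0$; thus $\Delta f$ is monotone non-increasing in the direction of increasing $f$. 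Since every $v\in X$ lies on a path to some $w_0\in K$ along which $f$ increases by $1$ at each step (this part does follow from the $\sup$ formula), one obtains $\Delta f(v)\ge\Delta f(w_0)=0$ on $X$, and dually $\Delta f\le 0$ on $Y$; note your signs are reversed. The flux/exhaustion route you then propose is also suspect: invoking the quasi-isometry to $\Z$ from \cite[Theorem~1.3]{salami} to bound sphere sizes is likely circular, as that structural theorem is downstream of the harmonic-function machinery that includes this very lemma.

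Your final step also fails as written: the map $h\mapsto \Delta(S(P)h)|_K$ is \emph{not} linear in $h$, because $S(P)$ is built from pointwise suprema and infima. So ``finite-dimensional linear algebra'' cannot produce an $h$ with $\Delta(S(P)h)|_K=0$. The non-emptiness of $\HH(P)$ in \cite{salami} requires a genuinely nonlinear argument on the compact convex set $\Lip(1,K)$ modulo constants, and you would need to supply something of that kind.
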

		
		The above lemma yields that for a salami there exists a Lipschitz harmonic function. For the case of non-degenerate weights, the salami is quasi-isometric to the line, and the harmonic function serves as a ``coordinate function'' of the graph along the line direction.
		
			\begin{lemma}
				Let $P_i=(X_i,Y_i,K_i)$ for $i=1,2$ be connected salami partitions with $X_i\setminus X_j$ and $Y_i\setminus Y_j$ finite for $i,j=1,2.$ Then, $\HH(P_1)=\HH(P_2)=h+\R$ for some $h\in \HH(P_i).$ \label{harmonic}
			\end{lemma}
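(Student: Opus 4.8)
The plan is to prove the statement in two parts: first that $\HH(P)=h+\R$ for any single $h\in\HH(P)$, so that each $\HH(P_i)$ is an affine line; and then that $\HH(P_1)=\HH(P_2)$. Throughout, the only inputs I would use beyond Lemma~\ref{laplace} — which furnishes $\HH(P)\neq\emp$ and says every member of $\HH(P)$ is globally harmonic and $1$-Lipschitz on $V$ — are the explicit $\sup/\inf$ shape of $S(P)$ and the coarse geometry of salamis from \cite{salami,salami2}. I write $d(v,K):=\min_{w\in K}d(v,w)$.

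\emph{Part 1: dimension.} Fix $h\in\HH(P)$ and let $h'\in\HH(P)$ be arbitrary. Since $K$ is finite, $t_0:=\max_{w\in K}\big(h'(w)-h(w)\big)$ is finite and is attained at some $w_0\in K$. I claim $h+t_0\ge h'$ on all of $V$. On $K$ this is the choice of $t_0$; for $v\in X$, using $h=S(P)(h|_K)$ and $h'=S(P)(h'|_K)$,
\[
h(v)+t_0=\sup_{w\in K}\big(h(w)+t_0-d(v,w)\big)\ge \sup_{w\in K}\big(h'(w)-d(v,w)\big)=h'(v),
\]
and for $v\in Y$ the same computation with $\inf$ in place of $\sup$. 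Hence $u:=h+t_0-h'$ is a nonnegative harmonic function on the connected graph $G$ with $u(w_0)=0$; the strong minimum principle (from $u\ge 0$ and $\Delta u(w_0)=0$ one gets $u(y)=0$ for every $y\sim w_0$, then propagate along paths) forces $u\equiv 0$, i.e.\ $h'=h+t_0$. Since $\HH(P)$ is stable under adding constants (the operator $S(P)$ is translation-equivariant), this gives $\HH(P)=h+\R$.

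\emph{Part 2: independence.} First I record the asymptotics forced by $S(P)$: for any $f\in\HH(P)\subset\FF(P)$, directly from $f(v)=\sup_{w\in K}(f(w)-d(v,w))$ on $X$ and $f(v)=\inf_{w\in K}(f(w)+d(v,w))$ on $Y$,
\[
\min_K f\le f(v)+d(v,K)\le \max_K f\ \ (v\in X),\qquad \min_K f\le f(v)-d(v,K)\le \max_K f\ \ (v\in Y).
\]
Now take $h\in\HH(P_1)$ and, via Lemma~\ref{laplace}, some $g\in\HH(P_2)$; both are globally harmonic and $1$-Lipschitz. Since $K_1,K_2$ are finite, $\big|d(\cdot,K_1)-d(\cdot,K_2)\big|$ is bounded on $V$, so on $X_1\cap X_2$ the identity
\[
h(v)-g(v)=\big(h(v)+d(v,K_1)\big)-\big(g(v)+d(v,K_2)\big)+\big(d(v,K_2)-d(v,K_1)\big)
\]
together with the bounds above shows $h-g$ is bounded there, and the symmetric identity does the same on $Y_1\cap Y_2$. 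Since $V\setminus\big((X_1\cap X_2)\cup(Y_1\cap Y_2)\big)\subset K_1\cup(X_1\setminus X_2)\cup(Y_1\setminus Y_2)$ is finite by hypothesis, $h-g$ is bounded on all of $V$. A bounded harmonic function on a salami is constant: I would invoke this as the Liouville property coming from the quasi-isometry of a salami to $\Z$ in \cite[Theorem~1.3]{salami} (hence recurrence), or, using that the canonical salami harmonic functions may be taken integer-valued (cf.\ \eqref{limit-free} and \cite{salami,salami2}), note via Part~1 that $h-g$ is an integer-valued function plus a constant, hence takes finitely many values, so $\sup_V(h-g)$ is attained and the strong maximum principle as in Part~1 applies. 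Either way $h=g+c$ for some $c\in\R$, hence $h\in\HH(P_2)$; by symmetry $\HH(P_1)=\HH(P_2)$, and by Part~1 this common space is $h+\R$.

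\emph{Main obstacle.} Apart from the single inference ``$h-g$ bounded and harmonic $\Rightarrow$ $h-g$ constant'', everything above is a formal manipulation of the extremal Lipschitz extension and of the maximum principle on a connected graph. That one inference is precisely where the coarse structure of salamis — two ends, quasi-isometry to the line and hence recurrence, or equivalently the integer-valued refinement of $\HH(P)$ — from \cite{salami,salami2} is indispensable, and I expect it to be the step demanding the most care.
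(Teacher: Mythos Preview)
The paper does not prove this lemma; it is quoted verbatim from \cite[Lemmas~3.11--3.12]{salami} and used as a black box, so there is no ``paper's own proof'' to compare against. Your argument is a correct reconstruction: Part~1 is a clean strong-minimum-principle argument using the explicit $\sup/\inf$ form of $S(P)$, and Part~2 correctly reduces $\HH(P_1)=\HH(P_2)$ to the Liouville property for bounded harmonic functions on a salami, which is exactly the nontrivial input from \cite{salami,salami2} that you flag. One small caveat: your alternative route in Part~2 (``$h-g$ is integer-valued plus a constant, hence attains its supremum'') presupposes that each $\HH(P_i)$ contains an integer-valued representative; this is true but requires a separate construction (e.g.\ starting from an integer-valued $1$-Lipschitz function on $K_i$ that is harmonic there), so the Liouville route is the self-contained one.
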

			
		This gives the uniqueness of obtained Lipschitz harmonic function.
		
		\subsection{Proof of Theorem~\ref{thm:mainric}}
		We first prove a crucial lemma, which establishes that the edge weights become trivial outside a finite set in an asymptotically flat graph, under the condition of non-negative Ricci curvatures. 
		
		\begin{lemma}
			Let $G = (V, E, w)$ be an asymptotically flat graph with non-negative Ricci curvatures. Then, there exists a finite set $W \subseteq V$ such that for any edge $e \in E$ with both endpoints outside of $W$, the weight $w(e)=1$. \label{lem:trivial}
			
		\end{lemma}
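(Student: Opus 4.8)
The plan is to exploit the asymptotically flat structure of $G$ together with the curvature formula on grid graphs to first reduce to the grid setting outside $K$, and then propagate triviality of the weights inward using a salami argument. By the definition of asymptotic flatness, there is a finite set $K\subset V$ such that $(V\setminus K, w)$ is weighted isomorphic (via $\Phi$) to $(\Z^n\setminus Q_r, w)$ for some weighted grid graph $(\Z^n,w)$ with $w(x,y)=1+o(1)$, $\Abs(x)=O(|x|^{-p})$ and $|R(x)|=O(|x|^{-p})$ with $p>n$. First I would observe that an asymptotically flat graph with $\kappa\geq 0$ on all edges certainly has non-negative scalar curvature outside $K$, so by the explicit formula \eqref{key} applied on cubes $Q_R$ that contain $Q_{r+1}$ in their interior, the quantity $\sum_{x\in Q_R}(\Abs(x)+R(x))$ is non-negative and monotone in $R$; combined with the summability of $\Abs$ and $|R|$ coming from the decay $O(|x|^{-p})$ with $p>n$, the energy limit exists and is finite. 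The key point, exactly as in the proof of Theorem~\ref{thm:main1}, is that on the grid part each $\Abs(x)+R(x)\geq 0$ — but here we have more: since $\kappa(x,y)\geq 0$ on every edge, each individual Ricci term is non-negative, and Proposition~\ref{prop:ric} expresses $\kappa(x,y)$ as a linear combination of weight differences minus a sum of absolute values of weight differences in the transverse directions.

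The heart of the matter is then a \emph{layer-by-layer} argument. I would show that $\kappa\geq 0$ at a vertex $x$ far out in the grid, combined with the asymptotic flatness $w=1+o(1)$, forces the weights of all edges near $x$ to equal $1$. Concretely: fix a coordinate direction $e_n$ and the edge $\{x,x+e_n\}$; Proposition~\ref{prop:ric} gives
\begin{align*}
0\le \kappa(x,x+e_n) = 2w(x,x+e_n)-w(x,x-e_n)-w(x+e_n,x+2e_n) - \Abs\text{-type terms} \le 0
\end{align*}
in the limit $|x|\to\infty$, where the transverse terms $\sum_{i<n}(|w(x,x+e_i)-w(x+e_n,x+e_n+e_i)| + \cdots)$ are all $\geq 0$. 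Summing this telescoping-type inequality along a grid line (as in the $n=2$ corollary already proved in the excerpt) and using $w\to 1$ pins down the weights on an entire ray to be $1$, hence on a whole half-space of the grid. The decay hypotheses ensure the error terms are summable so the telescoped sums converge; this is the analogue of the argument used to prove the corollary for $n=2$, now run in all $n$ coordinate directions. At this stage one concludes that $w\equiv 1$ on $\Z^n\setminus Q_{r'}$ for some possibly larger radius $r'$, i.e. the edge weights are already trivial on all but a finite piece of $V\setminus K$.

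It remains to push triviality of the weights \emph{into} the boundary collar between $K$ and the trivial region, and this is where the salami machinery enters — though for proving Lemma~\ref{lem:trivial} itself (which only asserts triviality outside \emph{some} finite $W$, not yet inside $K$) one may be able to avoid the full salami construction and argue more directly. The cleanest route: take $W$ to be a cube $Q_{r'}$ in the grid coordinates large enough that every edge with both endpoints outside $Q_{r'}$ lies in the region already shown to have weight $1$; the condition $E(K,\Phi^{-1}(\Z^n\setminus Q_{r+1}))=\emptyset$ from the definition of asymptotic flatness guarantees that $K$ does not touch the deep grid region, so edges with both endpoints outside $W\supseteq \Phi^{-1}(Q_{r'})$ are genuinely grid edges and we have controlled them. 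I expect the main obstacle to be making the ``telescoping along a line'' estimate fully rigorous in dimension $n\geq 3$: one must show that the non-negativity of $\kappa$ at a single vertex, which only constrains the \emph{combination} of a longitudinal second-difference and transverse first-differences of $w$, can be iterated along a line so that the transverse contributions (which are $o(|x|^{-2})$ but a priori of both signs before taking absolute values) do not accumulate; this requires carefully using the decay rate $p>n$ to dominate the number of terms $\sim |x|^{n-1}$ in each sphere, exactly as in the proof of Corollary~\ref{coro:strongdecay}. Once that estimate is in hand, the conclusion follows by choosing $W$ as above.
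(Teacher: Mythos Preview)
Your proposal is in the right spirit but misses the key simplification and therefore invents difficulties that are not there. The paper's proof is much more elementary: from Proposition~\ref{prop:ric}, for an edge $\{x+ke_j,x+(k+1)e_j\}$ along a line entirely outside $W:=K\sqcup\Phi^{-1}(S_{r+1})$ one has
\[
2w_k - w_{k-1} - w_{k+1} \;\ge\; \kappa(x+ke_j,x+(k+1)e_j) \;\ge\; 0,
\]
because the transverse terms in Proposition~\ref{prop:ric} enter with absolute values and a minus sign, so dropping them only weakens the inequality. Thus $w_k$ is a \emph{discrete concave} positive sequence on all of $\Z$, hence constant; the asymptotic condition $w=1+o(1)$ identifies the constant as $1$. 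No summability, no decay rate $p>n$, no accumulation estimate is required at this step. Your worry that ``the transverse contributions \dots\ a priori of both signs before taking absolute values do not accumulate'' is misplaced: they are absolute values, always non-negative, and can simply be discarded.

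There is a second case the paper handles and you do not: edges in the $e_i$ direction at points $x$ where only $|\Phi_i(x)|>r+1$ (so the line in direction $e_i$ may hit $W$). Here the paper uses the curvature of a \emph{transverse} edge $\{x,x+e_j\}$ with $j\neq i$: since the weights along the $e_j$-line are already known to be $1$ by the first case, the curvature formula reduces to a sum of absolute differences of parallel $e_i$-edges, forcing $w(x,x\pm e_i)=w(x+e_j,x+e_j\pm e_i)$; translating repeatedly in $e_j$ reaches a point covered by the first case. Your proposal does not isolate this step and instead enlarges $W$ to an unspecified $Q_{r'}$, which is unnecessary. The salami machinery is not used at all in the proof of this lemma; it enters only later in the proof of Theorem~\ref{thm:mainric}.
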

		
		\begin{proof}
			As $G$ is an asymptotically flat graph, there is a finite set $K$ such that we have the isomorphism $\Phi:G\setminus K\cong \Z^n\setminus Q_{r}$ and $E(K,\Phi^{-1}(\Z^n\setminus Q_{r+1}))=\emp.$ We claim that the weights of edges are equal to $1$ outside $W:=K\sqcup \Phi^{-1}(S_{r+1}).$ For $x\notin W,$ we have $\Phi(x)=(\Phi_1(x),\cdots,\Phi_n(x))\in \Z^n\setminus Q_{r+1}.$ If $|\Phi_i(x)|>r+1,$ then for all $j\neq i,$ the straight line $\{x+ke_j\mid k\in \Z\}$ has no intersection with $W.$ Let $w_k:=w(x+ke_j,x+(k+1)e_j).$ We can calculate the Ollivier curvature of the edge $(x+ke_j,x+(k+1)e_j)$ by Proposition \ref{prop:ric}. Then we have 
			$$2w_k-w_{k+1}-w_{k-1}\ge \kappa(x+ke_j,x+(k+1)e_j)\ge 0.$$
			
			Hence $w_k$ is a discrete concave function with respect to $k.$ As $w_k>0,$ we have $w_k\equiv c,$ and then $c=1$ by the asymptotically flat condition. In this way we figure out the weights of all lines outside $W$.
			
			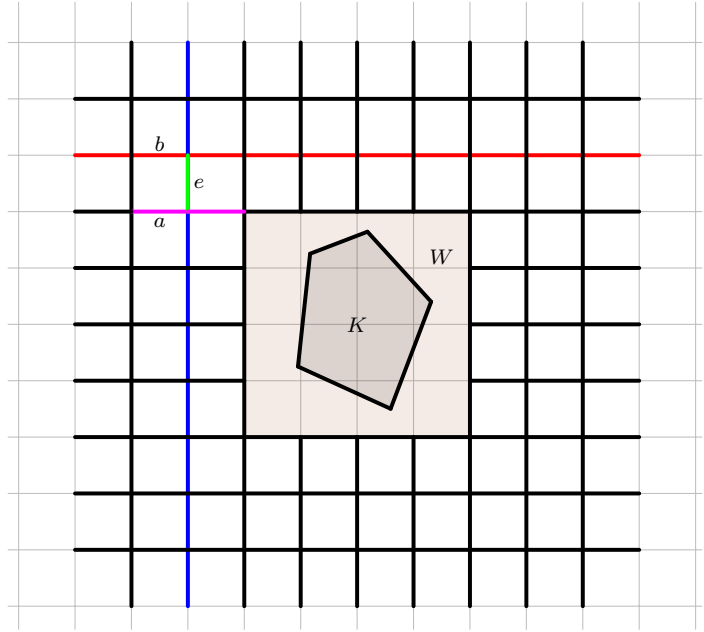
\begin{figure}[htbp]
				\centering
				\definecolor{ffxfqq}{rgb}{1,0,1}
				\definecolor{zzttqq}{rgb}{0.6,0.2,0}
				\definecolor{qqffqq}{rgb}{0,1,0}
				\definecolor{qqqqff}{rgb}{0,0,1}
				\definecolor{ffqqqq}{rgb}{1,0,0}
				\definecolor{ududff}{rgb}{0.3,0.3,1}
				\definecolor{cqcqcq}{rgb}{0.75,0.75,0.75}
				\begin{tikzpicture}[line cap=round,line join=round,>=triangle 45,x=1cm,y=1cm,scale=0.75]
					\draw [color=cqcqcq,, xstep=1cm,ystep=1cm] (-1.1869510185584664,0.590195972481711) grid (11.11190552922686,11.714240927819638);
					\clip(-1.1869510185584664,-0.190195972481711) rectangle (11.11190552922686,11.714240927819638);
					\fill[line width=2pt,color=zzttqq,fill=zzttqq,fill opacity=0.1] (3,8) -- (3,4) -- (7,4) -- (7,8) -- cycle;
					\fill[line width=2pt,fill=black,fill opacity=0.1] (4.166955122113556,7.253055021607624) -- (3.951428712456659,5.251738360507876) -- (5.59353469079492,4.502527507891046) -- (6.311956056317909,6.401212545344654) -- (5.183008196210355,7.64305519146296) -- cycle;
					\begin{scriptsize}
						\draw[color=black] (6.5,7.2) node {$W$};
						\draw[color=black] (5,6) node {$K$};
						\draw[color=black] (2.2,8.5) node {$e$};
						\draw[color=black] (1.5,7.8) node {$a$};
						\draw[color=black] (1.5,9.2) node {$b$};
					\end{scriptsize}
					\draw [line width=1.5pt] (3,8)-- (3,4);
					\draw [line width=1.5pt] (3,4)-- (7,4);
					\draw [line width=1.5pt] (7,4)-- (7,8);
					\draw [line width=1.5pt] (7,8)-- (3,8);
					\draw [line width=1.5pt] (4.166955122113556,7.253055021607624)-- (3.951428712456659,5.251738360507876);
					\draw [line width=1.5pt] (3.951428712456659,5.251738360507876)-- (5.59353469079492,4.502527507891046);
					\draw [line width=1.5pt] (5.59353469079492,4.502527507891046)-- (6.311956056317909,6.401212545344654);
					\draw [line width=1.5pt] (6.311956056317909,6.401212545344654)-- (5.183008196210355,7.64305519146296);
					\draw [line width=1.5pt] (5.183008196210355,7.64305519146296)-- (4.166955122113556,7.253055021607624);
					\draw [line width=1.5pt,color=ffqqqq] (0,9)-- (10,9);
					\draw [line width=1.5pt,color=qqqqff] (2,11)-- (2,1);
					\draw [line width=1.5pt] (0,3)-- (10,3);
					\draw [line width=1.5pt] (8,1)-- (8,11);
					\draw [line width=1.5pt] (1,11)-- (1,1);
					\draw [line width=1.5pt] (0,2)-- (10,2);
					\draw [line width=1.5pt] (9,1)-- (9,11);
					\draw [line width=1.5pt] (10,10)-- (0,10);
					\draw [line width=1.5pt] (3,11)-- (3,8);
					\draw [line width=1.5pt] (4,11)-- (4,8);
					\draw [line width=1.5pt] (5,11)-- (5,8);
					\draw [line width=1.5pt] (6,11)-- (6,8);
					\draw [line width=1.5pt] (7,11)-- (7,8);
					\draw [line width=1.5pt] (10,8)-- (7,8);
					\draw [line width=1.5pt] (10,7)-- (7,7);
					\draw [line width=1.5pt] (10,6)-- (7,6);
					\draw [line width=1.5pt] (10,5)-- (7,5);
					\draw [line width=1.5pt] (10,4)-- (7,4);
					\draw [line width=1.5pt] (7,1)-- (7,4);
					\draw [line width=1.5pt] (6,1)-- (6,4);
					\draw [line width=1.5pt] (5,1)-- (5,4);
					\draw [line width=1.5pt] (4,1)-- (4,4);
					\draw [line width=1.5pt] (3,1)-- (3,4);
					\draw [line width=1.5pt] (0,4)-- (3,4);
					\draw [line width=1.5pt] (0,5)-- (3,5);
					\draw [line width=1.5pt] (0,6)-- (3,6);
					\draw [line width=1.5pt] (0,7)-- (3,7);
					\draw [line width=1.5pt,color=ffxfqq] (1,8)-- (2,8);
					\draw [line width=1.5pt,color=qqffqq] (2,8)-- (2,9);
					\draw [line width=1.5pt,color=ffxfqq] (3,8)-- (2,8);
					\draw [line width=1.5pt] (1,8)-- (0,8);
				\end{tikzpicture}
				\caption{We first show that the weights for edges on lines outside $W$ are $1,$ such as the red and blue lines. Then every edge on them forces parallel edges to have the same weights. For example, the non-negative Ricci curvature of the green edge $e$ forces the pink edge $a$ to have the same weights with edge $b$ on the line above.}\label{fig:salami}
			\end{figure}
			
			We still need to prove $w(x,x\pm e_i)=1$ when $|\Phi_i(x)|> r+1$ and $|\Phi_j(x)|\le r+1,$ $\foa j\neq i.$ Take any $j\neq i,$ then by the former case, 
			$$w(x-e_j,x)=w(x,x+e_j)=w(x+e_j,x+2e_j)=1.$$
			By Proposition~\ref{prop:ric} again, letting $y:=x+e_j,$ we have
			$$
			\kappa(x,y)=-\sum_{k\neq j}(|w(x,x+e_k)-w(y,y+e_k)|-|w(x,x-e_k)-w(y,y-e_k)|)\ge 0,
			$$
			which yields that $w(x,x\pm e_i)=w(y,y\pm e_i)$ respectively. By induction, we have 
			$$w(x,x\pm e_i)=w(x+le_j,x+le_j\pm e_i),\quad \foa l\in \Z.$$
			
			As $|\Phi_j(x+3(r+1)e_j)|>r+1,$ we have $w(x+3(r+1)e_j,x+3(r+1)e_j\pm e_i)=1$ by the former case. This gives us $w(x,x\pm e_i)=1.$
			
 			Now for any $x\notin W,$ there exists $i$ such that $|\Phi_i(x)|>r+1.$ By the first case $w(x,x\pm e_j)=1,$ $\foa j\neq i,$ and by the second case $w(x,x\pm e_i)=1.$ The lemma has been proven.
		\end{proof}
		
%
	
		\begin{proof}[Proof of Theorem~\ref{thm:mainric}]	
			Let $G$ be an asymptotically flat graph $G = (V, E, w)$ with non-negative Ricci curvature. We say $G$ satisfies the property $\mathscr{P}_r$ for $r \in \N$, if the followings hold:
			\begin{enumerate}[(i)]
				\item There exists a finite set $K \subset V$, such that there is a weighted isomorphism $\Phi:G\setminus K\cong \Z^n\setminus Q_r,$ and $E(K,\Phi^{-1}(\Z^n\setminus Q_{r+1}))=\emp;$ 
				\item For every edge $e$ with both endpoints in $V\setminus K,$ the weight $w(e)=1.$
			\end{enumerate}
			
			We further define $\mathscr{P}_{-1}$ to be $(G,w)\cong (\Z^n,1),$ i.e. $G$ is the standard grid graph, which is precisely the conclusion of the theorem. By the definition of the asymptotically flat graph and Lemma~\ref{lem:trivial}, there exists an $r_0\ge 0$ such that $G$ satisfies $A_{r_0}.$ We proceed by claiming that if $G$ satisfies $\mathscr{P}_r,$ then it satisfies $\mathscr{P}_{r-1}$. By induction, we prove the theorem.			
			
			Suppose that $G$ satisfies $A_r$ for $r\in \N,$ we will prove the claim in following steps.

			\textit{Step 1. Construct salamis} 
			
			For $1\le i\le n,$ $s>r,$ consider the induced subgraph on
			$$C_{s}^i:=\{x\in V\setminus K\mid |\Phi_j(x)|\le s,\foa j\neq i,1\le j\le n\}\cup K,$$
			equipped with induced weights from $G$. Letting $R = 4(r+1)$, we assert that $C_R^i$ has non-negative Ricci curvature. In fact, for edges within the induced subgraph on $C_{R-1}^i,$ the local structures are identical to that of $G$, and thus they retain non-negative curvature. Consider an edge $\{x, y\}$ not included in $C_{R-1}^i$. Without loss of generality, assume $\Phi_j(x) = R$ for some $j\neq i.$ If $y = x - e_j$, then $\kappa(x,y) = 1$, following from a similar argument presented in Proposition~\ref{prop:ric}. The computation is straightforward due to the constant weights outside $K$. In the case where $y = x \pm e_k$ with $k \neq j$, then $\kappa(x,y) = 0$, provided that $|\Phi_k(y)| < R$ or $k = i$. Finally, if $y=x\pm e_k$ with $k\neq i$ and $|\Phi_k(y)|=R,$ this scenario is similar to the first case, yielding that $\kappa(x,y) = 1$. Consequently, all edges in $C_R^i$ have non-negative curvatures, and then $C_R^i$ is a salami with two ends of infinite volume in directions of $\pm e_i.$

			Without loss of generality, we consider $C_R:=C_{R}^{1}$. Let 
			$$L_t = C_{R} \cap \{x\in V\setminus K\mid \Phi_1(x)=t\},\quad |t|\in [R,+\infty)\cap \Z.$$
			Consider the connected salami partition $P_t:=(X_t, Y_t, L_t),$ where
			\begin{align*}
				X_t = \{v \in C_R \mid \Phi_1(v) < t\}, \quad Y_t=C_R\setminus (X_t\sqcup L_t), \quad\text{for } &t \le -R, \\
				Y_t = \{v \in C_R \mid \Phi_1(v) > t\},\quad X_t=C_R\setminus (Y_t\sqcup L_t), \quad\text{for } &t\ge R.
			\end{align*}
			
			\textit{Step 2. Make extremal Lipschitz extension}
			
			For $|t|\in [r+2,\infty)\cap \Z,$ define $h_t:=S(P_t)(t\cdot 1_{L_t}),$ where $1_{L_t}$ is the indicator function on $L_t.$ Note that $h_t$ is integer-valued. Obviously $t\cdot 1_{L_t}\in \Lip(1,L_t),$ and $\Delta h_t=0$ within $L_t.$ By Lemma~\ref{laplace}, we conclude that $\Delta h_t=0$ throughout $C_R$, which implies that $h_t \in \HH(P_t).$ Moreover, according to Lemma~\ref{harmonic}, for any two indices $t_1, t_2$ satisfying $|t_i|\ge R,$ $\HH(P_{t_1})=\HH(P_{t_2})=h+\R$ for some $h\in \HH(P_{t_i}),$ $i=1,2.$ Consequently, $h_{t_1}-h_{t_2}=\const,$ which further indicates that $h_{t_1}=\const$ on $L_{t_2}$ too. 
			
			Consider $t=\pm R,$ and take $h_1^\pm:=h_{\pm R}.$ We claim that $h_1^+ = h_{1}^-$. Let $x = \Phi^{-1}(R, \cdots, R)\in L_{R}$, we will show that the distance $d(x, L_{-R}) = 2R$. In fact, choose a shortest path $\gamma$ from $x$ to $L_{-R}$ with the length denoted by $l(\gamma).$ If $\gamma \cap K=\emp,$ then $\Phi(\gamma)\subset \Z^n\setminus Q_r,$ one can easily show that $l(\gamma)=l(\Phi(\gamma))\ge 2R$ because it lies on the grid structure, and the equality holds when $\gamma$ is the segment connecting $\Phi^{-1}(-R,R,\cdots,R)$ directly to $x$.
			
			If $\gamma\cap K\neq \emp,$ then,
			$$
			\begin{aligned}
				l(\gamma)&\ge d(x,K)+d(K,L_{-R})\\
				&\ge n\cdot 3(r+1)+1+3(r+1)+1\\
				&> 2R.
			\end{aligned}
			$$			
			
			Therefore, $d(x,L_{-R})=2R.$ By definition, 
			$$h_1^-(x)=\inf_{w\in L_{-R}} (h_1^-(w)+d(w,x))=-R+2R=R=h_1^+(x),$$
			which implies that $h_{1}^- = h_1^{+}$. From now on, we write $h_1 := h_1^\pm$ for simplicity. 
			
			Next we aim to determine the value of $h_1$ on the set $C_R\setminus K.$
			
			\begin{enumerate} [$\bullet$]
				\item 	For $x$ with $|\Phi_1(x)|\ge r+1,$ we have $x\in L_t$ with $t=\Phi_1(x).$ Without loss of generality, we assume $t>0.$ By the definition of $h_1^+$ and $d(x,L_R)=|R-t|,$ we obtain $h_1(x) = t=\Phi_1(x).$
				\item 	For $x$ with $|\Phi_1(x)|\le r,$ there exists some $j=2,\cdots,n$ such that $|\Phi_j(x)|\ge r+1.$ Therefore, the path $\{v_k\}_{k=-R}^R$ with $v_k=\Phi^{-1}(k,\Phi_2(x),\cdots,\Phi_n(x))$ is included in $C_R\setminus K$. Along this path, we observe that
				$$
				R=h_1(v_R)\le h_1(v_{R-1})+1\le \cdots\le h_1(v_{-R})+2R=R
				$$
				by the Lipschitz condition. Thus, all inequalities are equalities, and we have $h_1(v_{k})=k$ for each $k.$ In particular, $h_1(x)=\Phi_1(x).$
			\end{enumerate}
			
			This shows that $h_1(x) = \Phi_1(x)$ for any $x \in C_R \setminus K$. As a result, $h_1$ naturally extends on $G$ such that $h_1(x)=\Phi_1(x)$ for all $x\in V\setminus K.$ Moreover, note that $h_1$ is an integer-valued harmonic function satisfying $h_1(\delta K)\subset [-r-1,r+1].$ By the maximum principle on $K$, we have $h_1(K)\subset [-r,r].$ 
			
			\begin{figure}[htbp]
				\centering
				\definecolor{ccqqqq}{rgb}{0.8,0,0}
				\definecolor{ududff}{rgb}{0,0.1,1}
				\definecolor{zzttqq}{rgb}{0.6,0.2,0}
				\definecolor{xdxdff}{rgb}{0,0.1,1}
				\definecolor{cqcqcq}{rgb}{0.7529411764705882,0.7529411764705882,0.7529411764705882}
				\begin{tikzpicture}[line cap=round,line join=round,>=triangle 45,x=1cm,y=1cm]
					\draw [color=cqcqcq,, xstep=0.5cm,ystep=0.5cm] (-0.25,2.25) grid (9.25,8.75);
					\clip(-0.25,2.25) rectangle (9.25,8.75);
					\fill[line width=2pt,color=zzttqq,fill=zzttqq,fill opacity=0.10000000149011612] (4.1,5.1) -- (4.1,5.9) -- (4.9,5.9) -- (4.9,5.1) -- cycle;
					\draw [line width=2pt] (1.5,7)-- (7.5,7);
					\draw [line width=2pt] (7.5,6)-- (1.5,6);
					\draw [line width=2pt] (1.5,5)-- (7.5,5);
					\draw [line width=2pt] (7.5,4)-- (1.5,4);
					\draw [line width=2pt] (1.5,3)-- (7.5,3);
					\draw [line width=2pt] (3,3)-- (3,8);
					\draw [line width=2pt] (4,8)-- (4,3);
					\draw [line width=2pt] (5,3)-- (5,8);
					\draw [line width=2pt] (6,8)-- (6,3);
					\draw [line width=2pt] (2,8)-- (2,8);
					\draw [line width=2pt] (2,8)-- (2,7);
					\draw [line width=2pt] (2,7)-- (2,6);
					\draw [line width=2pt] (2,6)-- (2,5);
					\draw [line width=2pt] (2,5)-- (2,4);
					\draw [line width=2pt] (2,4)-- (2,3);
					\draw [line width=2pt] (2,3)-- (2,3);
					\draw [line width=2pt] (7,8)-- (7,8);
					\draw [line width=2pt] (7,8)-- (7,7);
					\draw [line width=2pt] (7,7)-- (7,6);
					\draw [line width=2pt] (7,6)-- (7,5);
					\draw [line width=2pt] (7,5)-- (7,4);
					\draw [line width=2pt] (7,4)-- (7,3);
					\draw [line width=2pt] (7,3)-- (7,3);
					\draw [line width=2pt,color=ccqqqq] (7,8)-- (2,8);
					\draw [line width=2pt] (2,8)-- (1.5,8);
					\draw [line width=2pt] (7,8)-- (7.5,8);
					\draw [line width=2pt] (2.5,8)-- (2.5,8);
					\draw [line width=2pt] (2.5,8)-- (2.5,7);
					\draw [line width=2pt] (2.5,7)-- (2.5,6);
					\draw [line width=2pt] (2.5,6)-- (2.5,5);
					\draw [line width=2pt] (2.5,5)-- (2.5,4);
					\draw [line width=2pt] (2.5,4)-- (2.5,3);
					\draw [line width=2pt] (2.5,3)-- (2.5,3);
					\draw [line width=2pt] (6.5,8)-- (6.5,8);
					\draw [line width=2pt] (6.5,8)-- (6.5,7);
					\draw [line width=2pt] (6.5,7)-- (6.5,6);
					\draw [line width=2pt] (6.5,6)-- (6.5,5);
					\draw [line width=2pt] (6.5,5)-- (6.5,4);
					\draw [line width=2pt] (6.5,4)-- (6.5,3);
					\draw [line width=2pt] (6.5,3)-- (6.5,3);
					\begin{scriptsize}
						\draw [fill=xdxdff] (5,5) circle (2pt);
						\draw [fill=xdxdff] (4,5) circle (2pt);
						\draw [fill=xdxdff] (4,6) circle (2pt);
						\draw [fill=xdxdff] (5,6) circle (2pt);
						\draw [fill=ududff] (2,8) circle (2pt);
						\draw[color=black] (1.6,8.184195749313927) node {$-R$};
						\draw [fill=ududff] (2,7) circle (2pt);
						\draw[color=black] (1.6,7.1848245094937875) node {$-R$};
						\draw [fill=xdxdff] (2,6) circle (2pt);
						\draw[color=black] (1.6,6.179676557304745) node {$-R$};
						\draw [fill=xdxdff] (2,5) circle (2pt);
						\draw[color=black] (1.6,5.1803053174846045) node {$-R$};
						\draw [fill=xdxdff] (2,4) circle (2pt);
						\draw[color=black] (1.6,4.180934077664465) node {$-R$};
						\draw [fill=xdxdff] (2,3) circle (2pt);
						\draw[color=black] (1.6,3.1815628378443246) node {$-R$};
						\draw [fill=ududff] (7,8) circle (2pt);
						\draw[color=black] (2,2.6) node {$L_{-R}$};
						\draw[color=black] (7,2.6) node {$L_{R}$};
						\draw[color=black] (7.2,8.184195749313927) node {$R$};
						\draw [fill=xdxdff] (7,7) circle (2pt);
						\draw[color=black] (7.2,7.1848245094937875) node {$R$};
						\draw [fill=ududff] (7,6) circle (2pt);
						\draw[color=black] (7.2,6.179676557304745) node {$R$};
						\draw [fill=xdxdff] (7,5) circle (2pt);
						\draw[color=black] (7.2,5.1803053174846045) node {$R$};
						\draw [fill=xdxdff] (7,4) circle (2pt);
						\draw[color=black] (7.2,4.180934077664465) node {$R$};
						\draw [fill=xdxdff] (7,3) circle (2pt);
						\draw[color=black] (7.2,3.1815628378443246) node {$R$};
						\draw[color=zzttqq] (4.5,5.5) node {$K$};
						\draw [fill=xdxdff] (2.5,8) circle (2pt);
						\draw [fill=xdxdff] (2.5,7) circle (2pt);
						\draw [fill=xdxdff] (2.5,6) circle (2pt);
						\draw [fill=ududff] (2.5,5) circle (2pt);
						\draw [fill=xdxdff] (2.5,4) circle (2pt);
						\draw [fill=xdxdff] (2.5,3) circle (2pt);
						\draw [fill=xdxdff] (6.5,8) circle (2pt);
						\draw [fill=xdxdff] (6.5,7) circle (2pt);
						\draw [fill=ududff] (6.5,6) circle (2pt);
						\draw [fill=xdxdff] (6.5,5) circle (2pt);
						\draw [fill=xdxdff] (6.5,4) circle (2pt);
						\draw [fill=xdxdff] (6.5,3) circle (2pt);
						\draw [fill=xdxdff] (4,8) circle (2pt);
						\draw [fill=xdxdff] (4,7) circle (2pt);
						\draw [fill=xdxdff] (4,3) circle (2pt);
						\draw [fill=xdxdff] (5,3) circle (2pt);
						\draw [fill=xdxdff] (5,4) circle (2pt);
						\draw [fill=xdxdff] (5,7) circle (2pt);
						\draw [fill=ududff] (4,4) circle (2pt);
						\draw [fill=xdxdff] (5,8) circle (2pt);
						\draw[color=black] (2.4,8.4) node {$-R+1$};
						\draw[color=black] (3.8,8.4) node {$-r-1$};
						\draw[color=black] (5.2,8.4) node {$r+1$};
						\draw[color=black] (6.6,8.4) node {$R-1$};
						\draw[color=black] (3.1009091291173285,8.180934077664465) node {$\cdots$};
						\draw[color=black] (5.9,8.180934077664465) node {$\cdots$};
						\draw[color=black] (4.597077632663085,8.180934077664465) node {$\cdots$};
					\end{scriptsize}
				\end{tikzpicture}
				\caption{Illustration of $C_R$ and $h_1$ for $n=2$. The red line shows a shortest path connecting $L_{-R}$ and $L_R$ regardless of the structure of $K$.}\label{fig:2r}
			\end{figure}
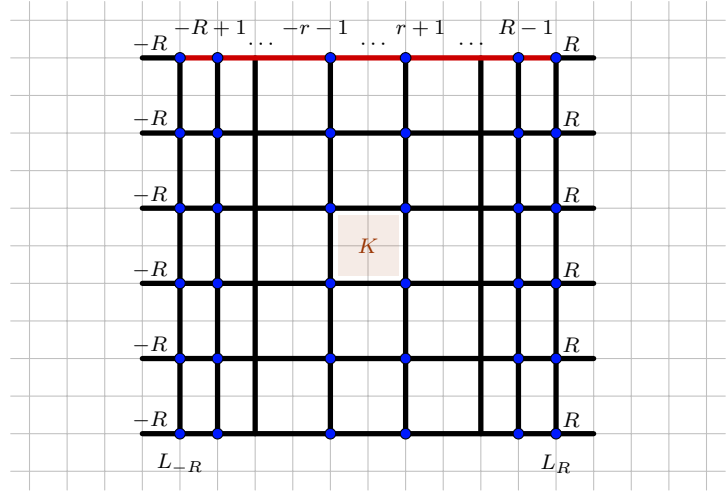
			
			\textit{Step~3. Assign the coordinate in $K$}
			
			Similarly as in Step~2, we can define $h_i(x)$ for all $i=1,\cdots,n,$ and $h_i(x)=\Phi_i(x)$ for all $x\in V\setminus K$ with $|h_i(x)|\le r$ for $x\in K.$ Define the map
			$$
			\wh h=(h_1,\cdots,h_n):G\rightarrow \Z^n,
			$$
			which extends the coordinate function $\Phi$ from $V\setminus K\cong \Z^n\setminus Q_r$ to the entire graph $G.$ Consequently, each vertex $x\in K$ is mapped to a point $\wh h(x)\in \Z^n,$ suggesting a possible grid structure within $K$. However, it remains unclear whether $\widehat{h}$ is bijective on $K$ and how each vertex $x$ connects with other vertices.

			For any $x, y \in \cl K=K\cup \delta K$ with $x\sim y,$ by $h_i\in \Lip(1,V),$ $\foa 1\le i\le n$, we have
			$$
			\|\wh{h}(x) - \wh{h}(y)\|_\infty \leq 1.
			$$
			Our goal is to prove that $x\sim y$ implies $\|\wh{h}(x)-\wh h(y)\|_1\le 1.$ We call an edge $\{x,y\}\in E$ with $x,y\in \cl K$ a \textit{diagonal edge} if $\|\wh h(x)-\wh h(y)\|_1>1.$ This means that we aim to eliminate all diagonal edges, as illustrated in Figure~\ref{K}.

			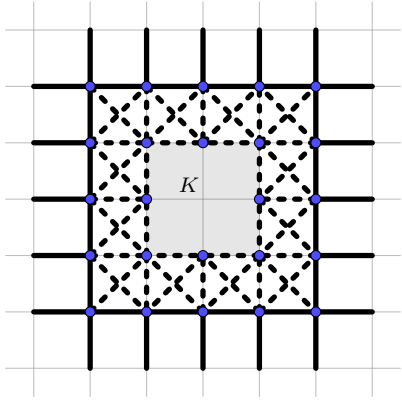
\begin{figure}[htbp]
				\centering
				\definecolor{xdxdff}{rgb}{0.3,0.3,1}
				\definecolor{ududff}{rgb}{0.3,0.3,1}
				\definecolor{cqcqcq}{rgb}{0.75,0.75,0.75}
				\begin{tikzpicture}[line cap=round,line join=round,>=triangle 45,x=1cm,y=1cm,scale=0.75]
					\draw [color=cqcqcq,, xstep=1cm,ystep=1cm] (1.5,1.5) grid (8.5,8.5);
					\clip(1.5,1.5) rectangle (8.5,8.5);
					\fill[line width=2pt,fill=black,fill opacity=0.1] (4,6) -- (6,6) -- (6,4) -- (4,4) -- cycle;
					\draw [line width=2pt] (3,3)-- (3,7);
					\draw [line width=2pt] (3,7)-- (7,7);
					\draw [line width=2pt] (7,7)-- (7,3);
					\draw [line width=2pt] (7,3)-- (3,3);
					\draw [line width=2pt,dash pattern=on 2pt off 4pt] (4,6)-- (6,6);
					\draw [line width=2pt,dash pattern=on 2pt off 4pt] (6,6)-- (6,4);
					\draw [line width=2pt,dash pattern=on 2pt off 4pt] (6,4)-- (4,4);
					\draw [line width=2pt,dash pattern=on 2pt off 4pt] (4,4)-- (4,6);
					\draw [line width=2pt,dash pattern=on 2pt off 4pt] (3,6)-- (4,6);
					\draw [line width=2pt,dash pattern=on 2pt off 4pt] (3,6)-- (4,5);
					\draw [line width=2pt,dash pattern=on 2pt off 4pt] (3,5)-- (4,5);
					\draw [line width=2pt,dash pattern=on 2pt off 4pt] (3,5)-- (4,4);
					\draw [line width=2pt,dash pattern=on 2pt off 4pt] (3,4)-- (4,4);
					\draw [line width=2pt,dash pattern=on 2pt off 4pt] (4,4)-- (4,3);
					\draw [line width=2pt,dash pattern=on 2pt off 4pt] (5,3)-- (5,4);
					\draw [line width=2pt,dash pattern=on 2pt off 4pt] (6,3)-- (6,4);
					\draw [line width=2pt,dash pattern=on 2pt off 4pt] (7,4)-- (6,4);
					\draw [line width=2pt,dash pattern=on 2pt off 4pt] (7,5)-- (6,5);
					\draw [line width=2pt,dash pattern=on 2pt off 4pt] (7,6)-- (6,6);
					\draw [line width=2pt,dash pattern=on 2pt off 4pt] (6,6)-- (6,7);
					\draw [line width=2pt,dash pattern=on 2pt off 4pt] (5,7)-- (5,6);
					\draw [line width=2pt,dash pattern=on 2pt off 4pt] (4,7)-- (4,6);
					\draw [line width=2pt,dash pattern=on 2pt off 4pt] (4,7)-- (5,6);
					\draw [line width=2pt,dash pattern=on 2pt off 4pt] (4,6)-- (5,7);
					\draw [line width=2pt,dash pattern=on 2pt off 4pt] (5,7)-- (6,6);
					\draw [line width=2pt,dash pattern=on 2pt off 4pt] (5,6)-- (6,7);
					\draw [line width=2pt,dash pattern=on 2pt off 4pt] (6,6)-- (7,5);
					\draw [line width=2pt,dash pattern=on 2pt off 4pt] (7,6)-- (6,5);
					\draw [line width=2pt,dash pattern=on 2pt off 4pt] (6,5)-- (7,4);
					\draw [line width=2pt,dash pattern=on 2pt off 4pt] (7,5)-- (6,4);
					\draw [line width=2pt,dash pattern=on 2pt off 4pt] (6,4)-- (5,3);
					\draw [line width=2pt,dash pattern=on 2pt off 4pt] (6,3)-- (5,4);
					\draw [line width=2pt,dash pattern=on 2pt off 4pt] (5,4)-- (4,3);
					\draw [line width=2pt,dash pattern=on 2pt off 4pt] (5,3)-- (4,4);
					\draw [line width=2pt,dash pattern=on 2pt off 4pt] (3,4)-- (4,5);
					\draw [line width=2pt,dash pattern=on 2pt off 4pt] (3,5)-- (4,6);
					\draw [line width=2pt,dash pattern=on 2pt off 4pt] (3,6)-- (4,7);
					\draw [line width=2pt,dash pattern=on 2pt off 4pt] (3,7)-- (4,6);
					\draw [line width=2pt,dash pattern=on 2pt off 4pt] (3,4)-- (4,3);
					\draw [line width=2pt,dash pattern=on 2pt off 4pt] (3,3)-- (4,4);
					\draw [line width=2pt,dash pattern=on 2pt off 4pt] (6,4)-- (7,3);
					\draw [line width=2pt,dash pattern=on 2pt off 4pt] (6,3)-- (7,4);
					\draw [line width=2pt,dash pattern=on 2pt off 4pt] (7,6)-- (6,7);
					\draw [line width=2pt,dash pattern=on 2pt off 4pt] (6,6)-- (7,7);
					\draw [line width=2pt] (3,7)-- (2,7);
					\draw [line width=2pt] (2,6)-- (3,6);
					\draw [line width=2pt] (3,5)-- (2,5);
					\draw [line width=2pt] (2,4)-- (3,4);
					\draw [line width=2pt] (3,3)-- (2,3);
					\draw [line width=2pt] (3,3)-- (3,2);
					\draw [line width=2pt] (4,3)-- (4,2);
					\draw [line width=2pt] (5,3)-- (5,2);
					\draw [line width=2pt] (6,3)-- (6,2);
					\draw [line width=2pt] (7,3)-- (7,2);
					\draw [line width=2pt] (7,3)-- (8,3);
					\draw [line width=2pt] (8,4)-- (7,4);
					\draw [line width=2pt] (7,5)-- (8,5);
					\draw [line width=2pt] (8,6)-- (7,6);
					\draw [line width=2pt] (7,7)-- (8,7);
					\draw [line width=2pt] (7,7)-- (7,8);
					\draw [line width=2pt] (6,8)-- (6,7);
					\draw [line width=2pt] (5,7)-- (5,8);
					\draw [line width=2pt] (4,8)-- (4,7);
					\draw [line width=2pt] (3,7)-- (3,8);
					\begin{scriptsize}
						\draw [fill=ududff] (3,3) circle (2.5pt);
						\draw [fill=ududff] (3,7) circle (2.5pt);
						\draw [fill=ududff] (7,7) circle (2.5pt);
						\draw [fill=ududff] (7,3) circle (2.5pt);
						\draw [fill=ududff] (3,6) circle (2.5pt);
						\draw [fill=xdxdff] (3,5) circle (2.5pt);
						\draw [fill=xdxdff] (3,4) circle (2.5pt);
						\draw [fill=ududff] (7,6) circle (2.5pt);
						\draw [fill=xdxdff] (7,5) circle (2.5pt);
						\draw [fill=xdxdff] (7,4) circle (2.5pt);
						\draw [fill=ududff] (4,6) circle (2.5pt);
						\draw [fill=ududff] (6,6) circle (2.5pt);
						\draw [fill=ududff] (6,4) circle (2.5pt);
						\draw [fill=ududff] (4,4) circle (2.5pt);
						\draw [fill=ududff] (4,5) circle (2.5pt);
						\draw [fill=ududff] (4,3) circle (2.5pt);
						\draw [fill=xdxdff] (5,3) circle (2.5pt);
						\draw [fill=xdxdff] (5,4) circle (2.5pt);
						\draw [fill=xdxdff] (6,3) circle (2.5pt);
						\draw [fill=xdxdff] (6,5) circle (2.5pt);
						\draw [fill=xdxdff] (6,7) circle (2.5pt);
						\draw [fill=xdxdff] (5,7) circle (2.5pt);
						\draw [fill=xdxdff] (5,6) circle (2.5pt);
						\draw [fill=xdxdff] (4,7) circle (2.5pt);
						\draw[color=black] (4.75,5.25) node {$K$};
					\end{scriptsize}
				\end{tikzpicture}
				\caption{The local structure of $K$ for $n=2$. The dashed lines represent edges to be determined. We will eliminate diagonal edges and keep only the horizontal and vertical edges.}\label{K}
			\end{figure}
			
			\textit{Step~4. Exclude diagonal edges}
			
			Assume that there exists indices $1\le i<j\le n,$ such that $|h_i(x)-h_i(y)|=|h_j(x)-h_j(y)|=1$ and $x\sim y.$ Without loss of generality, we assume
			$$
			h_i(y)=h_i(x)+1,\quad h_j(y)=h_j(x)+1.
			$$
			
			Let $R'=8r+2.$ Consider the induced subgraph on
			$$
			C_{R'}^{ij}=K\cup \{v\in V\setminus K\mid |\Phi_i(v)-\Phi_j(v)|\le R',|\Phi_k(v)|\le R', \foa k\neq i,j\}.
			$$
			It forms a salami again in a similar way to Step~1. Within the induced subgraph on $C_{R'-1}^{ij}$, the curvatures of edges remain $0$. For the remaining edges, those exhibiting a zigzag structure as depicted at the top and bottom of Figure~\ref{diagnoal-salami} also maintain the curvature of $0$, according to similar calculations presented in Proposition~\ref{prop:ric}. Besides, the other edges correspond to the case discussed in Step~1. The curvature of these edges is $1$ if their directions are along $\pm e_k$ with an endpoint $x$ satisfying $|\Phi_k(x)|=R'$ for some $k\neq i,j$. Otherwise, the curvature is $0$.			
			
			Adopting the arguments from Step~1 and Step~2 with the salami partition $P_t'=(X_t',Y_t',L_t'),$ where 
			$$L_t'=C_{R'}^{ij}\cap \{v\in V\setminus K\mid \Phi_i(v)+\Phi_j(v)\in \{t,t\pm 1\}\}$$
			 and $X_t',Y_t'$ are defined similarly. Take $\wt h^\pm:=S(P_{\pm (2r+3)}')(\Phi_i+\Phi_j|_{L'_{\pm (2r+3)}})$ to be the extremal Lipschitz extension on $L'_{\pm (2r+3)}.$ Noting that $\Delta \wt h^\pm =0$, we have $\wt h^+-\wt h^-=\const.$ We aim to further show that $\wt h^+=\wt h^-.$ 
			
			Consider a vertex $v\in L'_{2r+3}$ with $\Phi_i(v)=-3r,$ $\Phi_j(v)=5r+2,$ and $\Phi_k(v)=R'$ for all $k\neq i,j.$ Take the shortest path $\gamma$ from $v$ to $L'_{-2r-3}$. If $\gamma\cap K=\emp,$ then the shortest length $l(\gamma)$ from $v$ to $L'_{-2r-3}$ is $4r+4$. If $\gamma\cap K\neq \emp,$ we have 
			$$
			\begin{aligned}
				l(\gamma)&\ge d(v,K)+d(K,L'_{-2r-3})\\
				&\ge 2r+4r+2+(n-2)(R'-r)+2 \ge 4r+4,
			\end{aligned}
			$$
			indicating that $d(v,L'_{-2r-3})=4r+4$. Therefore, 
			$$\wt h^-(v)=\inf_{w\in L'_{-2r-3}} \wt h^-(w)+d(w,v)=-2r-2+d(v,L'_{-2r-3})=2r+2=\wt h^+(v),$$
			implying that $\wt h^-=\wt h^+,$ and we denote this harmonic function by $h'.$ 
						
			It is straightforward to verify that $\wt h^+=\Phi_i+\Phi_j$ (resp. $\wt h^-=\Phi_i+\Phi_j$) on $L_t'$ for $t\ge 2r+2$ (resp. $t\le -2r-2$). For any other vertex $v$ outside $K,$ there exists a polyline from $L_{-2r-3}'$ to $L_{2r+3}'$ passing through $v$, which forces $h'(v)=\Phi_i(v)+\Phi_j(v)$ to ensure the $1$-Lipschitz condition as in Step~2. This implies that $h'(v)=(\Phi_i+\Phi_j)(v)$ for all $v\in C^{ij}_{R'}\setminus K.$ Furthermore, $h'$ naturally extends to $V$, such that $h'=\Phi_i+\Phi_j$ on $V\setminus K.$			
			
			Now we have
			$$h'|_{V\setminus K}=(\Phi_i+\Phi_j)|_{V\setminus K}=(h_i+h_j)|_{V\setminus K},$$
			where $\{h_i\}_{i=1}^n$ are harmonic functions obtained in Step~3. Since both $h'$ and $h_i+h_j$ are harmonic functions, $h'$ is equal to $h_i+h_j$ in $K$ as well, by virtue of the maximum principle. Now, considering the vertices $x,y$ mentioned earlier,
			$$
			h'(y)-h'(x)=h_i(y)+h_j(y)-h_i(x)-h_j(x)=2,
			$$
			which contradicts the $1$-Lipschitz condition for $h'.$ Thus every diagonal edge is excluded.
			
			\begin{figure}[htbp]
					\centering
					\definecolor{xdxdff}{rgb}{0.5,0.5,1}
					\definecolor{ududff}{rgb}{0.3,0.3,1}
					\definecolor{cqcqcq}{rgb}{0.75,0.75,0.75}
					\begin{tikzpicture}[line cap=round,line join=round,>=triangle 45,x=1cm,y=1cm,scale=1]
					\clip(7.2,0) rectangle (12.8,4.0);
					\draw [line width=2pt,domain=6.5:14.0] plot(\x,{(-3.5--0.5*\x)/0.5});
					\draw [line width=2pt,domain=6.5:14.0] plot(\x,{(--6.5-0.5*\x)/0.5});
					\draw [line width=2pt,domain=6.5:14.0] plot(\x,{(--4-0.5*\x)/-0.5});
					\draw [line width=2pt,domain=6.5:14.0] plot(\x,{(-6--0.5*\x)/-0.5});
					\draw [line width=2pt,domain=6.5:14.0] plot(\x,{(--5.5-0.5*\x)/0.5});
					\draw [line width=2pt,domain=6.5:14.0] plot(\x,{(--7-0.5*\x)/0.5});
					\draw [line width=2pt,domain=6.5:14.0] plot(\x,{(--13.5-1.5*\x)/-1.5});
					\draw [line width=2pt,domain=6.5:14.0] plot(\x,{(-15--1.5*\x)/1.5});
					\draw [line width=2pt,domain=6.5:14.0] plot(\x,{(--15-1.5*\x)/1.5});
					\draw [line width=2pt,domain=6.5:14.0] plot(\x,{(--22.5-1.5*\x)/1.5});
					\draw [line width=2pt,domain=6.5:14.0] plot(\x,{(--15-2.5*\x)/-2.5});
					\draw [line width=2pt,domain=6.5:14.0] plot(\x,{(-27.5--2.5*\x)/2.5});
					\draw [line width=2pt,domain=6.5:14.0] plot(\x,{(--24-1.5*\x)/1.5});
					\draw [line width=2pt,domain=6.5:14.0] plot(\x,{(--24-2*\x)/-2});
					\draw [line width=2pt,domain=6.5:14.0] plot(\x,{(-13.5--1.5*\x)/-1.5});
					\draw [line width=2pt,domain=6.5:14.0] plot(\x,{(-10--2*\x)/2});
					\draw [line width=2pt,domain=6.5:14.0] plot(\x,{(--12-1.5*\x)/1.5});
					\draw [line width=2pt,domain=6.5:14.0] plot(\x,{(--6-1.5*\x)/-1.5});
					\draw [line width=2pt,dash pattern=on 2pt off 4pt] (10,2)-- (11,2);
					\fill[line width=2pt,fill=black,fill opacity=0.1] (7.8,4.1) -- (9.2,4.1) -- (9.2,-0.1) -- (7.8,-0.1) -- cycle;
					\draw[color=black] (8.5,2) node {$L_t'$};
					\end{tikzpicture}
					\caption{The rotated salami $C_{R'}^{ij}$ when $n=2$. The curvatures of edges with zigzag structure remains $0.$ Any diagonal edge will violate the $1$-Lipschitz condition of the coordinate function.}\label{diagnoal-salami}
				\end{figure}
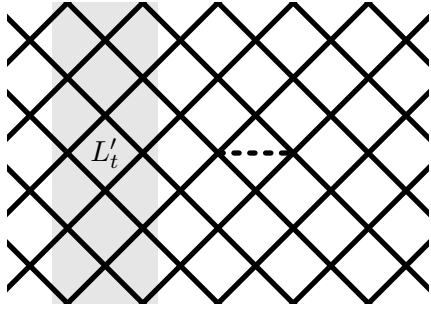
			
			\textit{Step~5. Settle the cross structure}
			
			In order to prove the grid structure within $K$, we first establish the following property: for all $x\in \cl K$ and $1\le i\le n,$ there exist vertices $v_i^{\pm}$ such that $v\sim v_i^\pm$ and
			\begin{align}
				h_i(v)\pm 1=h_i(v_i^{\pm}),\quad h_j(v)=h_j(v_i^{\pm}),\quad \foa j\neq i. \label{eq:h}
			\end{align}
			
			By definitions, we have 
			$$h_i(v)=\inf_{w\in L_{-R}}h_i(w)+d(v,w)=-R+d(v,L_{-R}).$$
			Consider the path $\gamma$ that realizes the shortest distance from $v$ to $L_{-R}.$ Suppose $u\in \gamma$ is adjacent to $v.$ We have $d(u,L_{-R})=d(v,L_{-R})-1,$ which implies that $h_i(u)=h_i(v)-1.$ From the conclusion in Step~4, we know that $h_j(u)=h_j(v)$ for all $j\neq i.$ Therefore, $u$ is precisely the desired vertex $v_i^-.$ Similarly, we can determine the vertex $v_i^+$ by identifying the shortest path towards $L_R.$
			
			Next, we aim to show that for all $a\in S_r\subset \Z^n,$ $\wh h^{-1}(a)\neq \emp.$ Moreover, for any $a\in S_r$ and $b\in S_{r+1}$ satisfying $\|a-b\|_1=1,$ we have $x\sim \Phi^{-1}(b)$ for all $x\in \wh h^{-1}(a).$ Without loss of generality, let $a = (a_1, a_2, \dots, a_n)$ with $a_1 = -r$. For $x = \wh h^{-1}(a-e_1)$, by the property \eqref{eq:h}, there exists $x_1^+\sim x_1$ such that 
			$$h_1(x_1^+)=h_1(x)+1=a_1=-r, \quad h_j(x_1^+)=h_j(x)=a_j,\quad j=2,\cdots,n.$$
			Thus, $x_1^+\in \wh h^{-1}(a),$ and $\wh h^{-1}(a)\neq \emp.$ Furthermore, without loss of generality, suppose $b=a-e_k\in S_{r+1}$ for some $k=1,\cdots,n.$ For all $x\in \wh h^{-1}(a),$ there exists $x_k^-\sim x$ such that
			$$
			h_k(x_k^-)=h_k(x)-1=a_k-1=b_k,\quad h_j(x_k^-)=h_j(x)=a_j=b_j,\quad \foa j\neq k.
			$$ 
			Consequently, $\wh h(x_k^-)=b.$ As $\wh h$ is injective on $\Phi^{-1}(S_{r+1}),$ $x_k^-$ is exactly $\wh h^{-1}(b)=\Phi^{-1}(b).$ This proves the results.	
			
			\textit{Step~6. Solve the multi-vertices problem}
			
			The last problem to address regarding the coordinate function $\widehat{h}$ is to prove that $|\widehat{h}^{-1}(p)| = 1$ for all $p \in S_r$. We start the arguments from the corner $a = (-r, \ldots, -r)$. Let $f$ be a $1$-Lipschitz function, yet to be determined, which will be employed as a test function for the curvature condition. Suppose there are $k$ vertices $v_1, \ldots, v_k$ that share the same coordinate, i.e., $\widehat{h}(v_1) = \cdots = \widehat{h}(v_k) = a$. Although it is possible for $v_s$ and $v_t$ with $s \neq t$ to be adjacent, this will not cause the problem as we will assign same values of $f$ to both vertices. Each vertex $v_t$ is adjacent to vertices $u_i$ where $\Phi(u_i) = a - e_i$ for $i = 1, \ldots, n$. Define the function $f$ such that
			$$
			f(v_1) = \cdots = f(v_k) = 1, \quad f(u_1) = \cdots = f(u_n) = 0.
			$$
			
			\begin{figure}[htbp]
				\centering
				\definecolor{ffqqqq}{rgb}{1,0,0}
				\definecolor{xdxdff}{rgb}{0.49019607843137253,0.49019607843137253,1}
				\definecolor{sqsqsq}{rgb}{0.12549019607843137,0.12549019607843137,0.12549019607843137}
				\definecolor{ffqqff}{rgb}{1,0,1}
				\definecolor{qqqqff}{rgb}{0,0,1}
				\definecolor{ududff}{rgb}{0.30196078431372547,0.30196078431372547,1}
				\begin{tikzpicture}[line cap=round,line join=round,>=triangle 45,x=1cm,y=1cm,scale=1.5]
					\clip(4.7292962818737142,-0.899762361803137) rectangle (10.672411472594113,2.007446219151739);
					\draw [line width=1pt,dash pattern=on 2pt off 4pt,domain=2.1292962818737142:13.672411472594113] plot(\x,{(-7.556210410820723--1.3000520726406095*\x)/1.8617958205718237});
					\draw [line width=1pt,dash pattern=on 2pt off 4pt,domain=2.1292962818737142:13.672411472594113] plot(\x,{(--6.51627-0*\x)/6.51627});
					\draw [line width=1pt,dash pattern=on 2pt off 4pt,domain=2.1292962818737142:13.672411472594113] plot(\x,{(-9.209460250624243--1.3000520726406093*\x)/1.8617958205718166});
					\draw [line width=1pt,dash pattern=on 2pt off 4pt,domain=2.1292962818737142:13.672411472594113] plot(\x,{(-10.827953093833065--1.3000520726406095*\x)/1.8617958205718228});
					\draw [line width=1pt,dash pattern=on 2pt off 4pt,domain=2.1292962818737142:13.672411472594113] plot(\x,{(--2.4935726493194976-0*\x)/6.51627});
					\draw [line width=1pt,dash pattern=on 2pt off 4pt,domain=2.1292962818737142:13.672411472594113] plot(\x,{(-1.401420023561622-0*\x)/6.51627});
					\draw [line width=1pt,dash pattern=on 2pt off 4pt,domain=2.1292962818737142:13.672411472594113] plot(\x,{(--10.219705656182052-0*\x)/6.51627});
					\draw [line width=1pt,dash pattern=on 2pt off 4pt,domain=2.1292962818737142:13.672411472594113] plot(\x,{(-6.03223582615654--1.3000520726406093*\x)/1.8617958205718166});
					
					\draw [line width=2pt,color=ffqqff] (6.360254018967033,0.3826687122110499)-- (7.606886597334413,0.7190344976776603);
					\draw [line width=2pt,color=qqqqff] (6.360254018967033,0.3826687122110499)-- (7.65588114398433,0.023311935248843865);
					\draw [line width=2pt,color=qqqqff] (7.606886597334413,0.7190344976776603)-- (6.75,-0.22);
					\draw [line width=1pt,dash pattern=on 2pt off 4pt,color=sqsqsq] (6.75,-0.22)-- (7.65588114398433,0.023311935248843865);
					\draw [line width=2pt,color=qqqqff] (7.631933726723283,0.38266871221105)-- (6.360254018967033,0.3826687122110499);
					\draw [line width=2pt,color=qqqqff] (7.606886597334413,0.7190344976776603)-- (8.861146991572284,0.7288334070076437);
					\draw [line width=2pt,color=qqqqff] (7.606886597334413,0.7190344976776603)-- (8.5,1.2);
					\draw [line width=2pt,color=qqqqff] (6.360254018967033,0.3826687122110499)-- (5.188012832066916,0.38266871221105);
					\draw [line width=2pt,color=ffqqqq] (12.96,5)-- (5.504243942093843,-0.21506475691793353);
					\draw [line width=2pt,color=ffqqqq] (5.504243942093843,-0.21506475691793353)-- (11.232483049428257,-0.22);
					\draw [line width=2pt,color=qqqqff] (6.360254018967033,0.3826687122110499)-- (7.24433,1);
					\draw [line width=2pt,color=qqqqff] (6.360254018967033,0.3826687122110499)-- (5.504243942093843,-0.21506475691793353);
					\draw [line width=2pt,color=qqqqff,-{Stealth[scale=0.8]}] (6.4604142057263605,2.0614850758853827)-- (6.773979304285827,2.326055627794933);
					
					\begin{scriptsize}
						\draw [fill=ududff] (14.417126925108885,1.5683367411390337) circle (1pt);
						\draw [fill=qqqqff] (6.360254018967033,0.3826687122110499) circle (1pt);
						\draw[color=black] (6.28403383778666,0.5455426777027873) node {$u_1$};
						\draw [fill=ududff] (7.606886597334413,0.7190344976776603) circle (1pt);
						\draw[color=black] (7.557892050684496,0.8493088669322716) node {$v_1$};
						\draw [fill=ududff] (7.65588114398433,0.023311935248843865) circle (1pt);
						\draw[color=black] (7.65588114398433,0.1143906671835194) node {$v_3$};
						\draw [fill=qqqqff] (6.75,-0.22) circle (1pt);
						\draw[color=black] (6.7,-0.05) node {$u_2$};
						\draw [fill=qqqqff] (7.631933726723283,0.38266871221105) circle (1pt);
						\draw[color=black] (7.616685506664396,0.4867492217228871) node {$v_2$};
						\draw [fill=ududff] (8.861146991572284,0.7288334070076437) circle (1pt);
						\draw[color=black] (8.93953826621215,0.8095099576022882) node {$2$};
						\draw [fill=ududff] (8.5,1.2) circle (1pt);
						\draw[color=black] (8.5,1.35) node {$0$};
						\draw [fill=qqqqff] (7.24433,1) circle (1pt);
						\draw[color=black] (7.224729133465063,1.182471784151706) node {$-1$};
						\draw [fill=qqqqff] (5.504243942093843,-0.21506475691793353) circle (1pt);
						\draw[color=black] (5.431528726078107,-0.07720593656375235) node {$-1$};
						\draw [fill=qqqqff] (5.188012832066916,0.38266871221105) circle (1pt);
						\draw[color=black] (5.14736035550859,0.5553415870327707) node {$-1$};
						\draw [fill=xdxdff] (4.472946661526562,-0.9351975192196172) circle (1pt);
						\draw [fill=xdxdff] (11.232483049428257,1) circle (1pt);
					\end{scriptsize}
				\end{tikzpicture}
				\caption{Illustration for three vertices $v_1,v_2,v_3$ sharing the same coordinates for $n=2$. Red edges indicate $\Phi^{-1}(S_{r+1}).$ We focus on the curvature of the pink edge $\{u_1,v_1\}$. The weights of blue edges are involved in the calculation. The numbers are the values of $f.$}
			\end{figure}
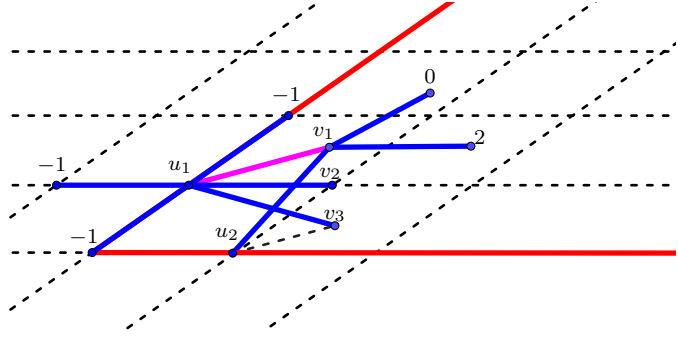
					
			Because $\Delta h_i(u_i)=0,$ we have
			$$
			\sum_{t=1}^k w(u_i, v_t) = w(u_i, u_i - e_i) = 1.
			$$
			If $k > 1$, then $w(u_i, v_t) < 1$, since the weights of edges are strictly positive.
			
			There might be multiple vertices adjacent to $v_t$ within $K$ that share the same coordinates. Henceforth, we fix some $s = 1, \cdots, k$ and $j = 1, \cdots, n$. Let $f$ take the value $2$ on vertices in $\wh h^{-1}(a+e_i)$ for $i = j$, and $0$ for $i \neq j$. Given that $\Delta h_j(v_s) = 0$, we obtain
			$$
			\sum_{z\in \wh h^{-1}(v_s + e_i)} w(v_s, z) = w(v_s, u_i), \quad i = 1, \ldots, n.
			$$
			
			For the function $f$, we assign the value $-1$ to the other neighbors of $u_j$ that have not yet been assigned values. Now with $f(u_j) = 0$ and $f(v_s) = 1$, we can estimate an upper bound of $\kappa(u_j, v_s)$ using $f$,
			$$
			\begin{aligned}
				\Delta f(v_s)&=\sum_{i=1}^n\sum_{z\in \wh h^{-1}(a+e_i)}w(v_s,z)(f(z)-f(v_s))\\
							&\quad +\sum_{i=1}^nw(v_s,u_i)(f(u_i)-f(v_s))\\
							&=w(v_s,u_j)-\sum_{i\neq j}w(v_s,u_i)-\sum_{i=1}^n w(v_s,u_i)\\
							&=-2\sum_{i\neq j}w(v_s,u_i),
			\end{aligned}
			$$
			$$
			\begin{aligned}
				\Delta f(u_j)&=-\sum_{i=1}^n
				w(u_j,u_j-e_i)-\sum_{i\neq j}
				w(u_j,u_j+e_i)\\
				&\quad +\sum_{t=1}^kw(u_j,v_t)(f(v_t)-f(u_j))\\
				&=-n-(n-1)+1\\
				&=-2(n-1),
			\end{aligned}
			$$
			$$\kappa(u_j,v_s)=\inf\limits_{\begin{subarray}{c}
			g:B_1(u_j)\cup B_1(v_s)\rightarrow\Z\\
			g\in \Lip(1)\\
			\nabla_{v_su_j}g=1
			\end{subarray}}\nabla_{u_jv_s}\Delta g\le \nabla_{u_jv_s}\Delta f=\sum_{i\neq j}2(w(v_s,u_i)-1)<0.$$
			This violates the non-negativity of the Ricci curvature. Therefore, the index $k$ must be $1$, which indicates that there is a unique vertex $v\in \wh h^{-1}(a).$ Furthermore, we can infer that $w(u_i, v) = 1$ by $\Delta h_i(u_i) = 0$, for each $i = 1, \ldots, n$.
			
			In conclusion, if $|\wh h^{-1}(a-e_i)|=1$ for all $i=1,\cdots,n$, then $|\wh h^{-1}(a)|=1$ as well. Consequently, we can resolve the structure of $\wh h^{-1}(a+e_k)$ for any $k=1,\cdots,n$ in the same manner, and so forth for all $b=a+\sum_{k=1}^n c_ke_k$ with $c_k\ge 0$ under the restriction that $b\in S_r$ by induction in the lexicographical order of $(h_1, \cdots, h_n)$. 
			
			By symmetry, similar method applies for $a' = (\pm r, \cdots, \pm r)$ as well, which establishes that $|\widehat{h}^{-1}(p)| = 1$ for all $p \in S_r$.
			
			\textit{Step~7. The property $\mathscr{P}_{r-1}$ holds}
			
			Denote $\widehat{h}^{-1}(Q_{r-1})$ by $K'$, we have demonstrated in the preceding steps that $\widehat{h}$ is a weighted isomorphism on $V \setminus K'$. Consequently, the weighted isomorphism $\Phi: V \setminus K \cong \mathbb{Z}^n \setminus Q_r$ can be extended to $\widehat{h}: V \setminus K' \cong \mathbb{Z}^n \setminus Q_{r-1}$. By Step~3, we have ensured that $E(K', \widehat{h}^{-1}(\mathbb{Z}^n \setminus Q_r)) = \emp$. Therefore, the property $\mathscr{P}_{r-1}$ is satisfied. We complete the proof of the theorem.
			
		\end{proof}

		\begin{rem}		
			As demonstrated in the proof of Lemma~\ref{lem:trivial}, the edge weights are $1$ due to the asymptotic assumption that $w(x, y) = 1 + o(1)$ for all adjacent $x \sim y$ as $x \to \infty$. If one drops this assumption, i.e. if we only assume non-negative Ricci curvature and the existence of an isomorphism to a grid graph outside a finite set, similar results can still be obtained by adapting the arguments accordingly. Following analogous proof of Lemma~\ref{lem:trivial}, one can show that the weights of edges in the same direction are constant outside a finite set $W,$ although these constants may vary for different directions. Furthermore, by the proof of Theorem~\ref{thm:mainric}, it can be proved that the graph is also isomorphic to a grid graph with constant weights for edges in the same direction.
		
		\end{rem}
				
		\section{The scalar curvature on discrete tori}\label{sec:tor}
		In this section, we establish the non-existence of positive scalar curvature on discrete tori, as stated in Theorem~\ref{thm:tori}. In the continuous setting, we have the following theorem about curvatures on tori:
		\begin{thm}
			The only possible metric with non-negative scalar curvature on a $n$-torus is the flat metric. 
			
		\end{thm}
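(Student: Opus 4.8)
The plan is to reduce this classical theorem to the single assertion that \emph{$T^n$ admits no metric of positive scalar curvature}, and then to establish that assertion. For the reduction: if $g$ has $R(g)\ge 0$ and $R(g)\not\equiv 0$, the conformal Laplacian $L_g=-\tfrac{4(n-1)}{n-2}\Delta_g+R(g)$ on the closed manifold $T^n$ has positive first eigenvalue $\lambda_1$ (its Rayleigh quotient is bounded below by a positive constant since $R(g)\ge 0$ and $R(g)\not\equiv 0$), and a positive first eigenfunction $\phi$ yields the conformal metric $\phi^{4/(n-2)}g$ with scalar curvature $\lambda_1\phi^{-4/(n-2)}>0$ — impossible if $T^n$ has no such metric; hence $R(g)\equiv 0$. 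Then, by the theorem of Bourguignon (and Kazdan--Warner), a scalar-flat metric on a manifold carrying no positive-scalar-curvature metric is Ricci flat, so $\Ric(g)\equiv 0$; and finally the universal cover $(\R^n,\tilde g)$ is complete with $\Ric\ge 0$ and admits a cocompact isometric action of $\Z^n$, hence contains a line, so the Cheeger--Gromoll splitting theorem forces $(\R^n,\tilde g)$ to be Euclidean, i.e.\ $g$ is flat.

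To prove that $T^n$ admits no metric of positive scalar curvature I would use the enlargeability argument of Gromov--Lawson, which works in every dimension ($T^n$ is parallelizable, hence spin). Assuming $R\ge\delta>0$, for small $\eps>0$ one takes a finite cover $\pi\colon\wh T\to T^n$ together with an $\eps$-contracting degree-nonzero map $f\colon\wh T\to S^n$ (unroll the torus by a large factor and collapse), and pulls back a bundle on $S^n$ with nonzero top Chern character to obtain $E\to\wh T$ with $\|F_E\|$ arbitrarily small but $\langle\hat A(\wh T)\,\mathrm{ch}(E),[\wh T]\rangle\ne 0$ (for odd $n$ one first crosses with $S^1$, or uses the $\mathbb{Z}/2$-index variant). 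The twisted Lichnerowicz formula $D_E^2=\nabla^*\nabla+\tfrac R4+\mathfrak R^E$ with $|\mathfrak R^E|\le c_n\|F_E\|$ then kills $\ker D_E$ once $\eps$ is small, forcing $\operatorname{ind}D_E=0$, while the Atiyah--Singer index theorem gives $\operatorname{ind}D_E=\langle\hat A(\wh T)\,\mathrm{ch}(E),[\wh T]\rangle\ne 0$ — a contradiction. Alternatively, one could run the Schoen--Yau dimensional descent (minimize area in a nontrivial class of $H_{n-1}(T^n;\Z)$, transport $R>0$ to the, for $n\le 7$, smooth stable minimal hypersurface via the stability inequality and a conformal change, and iterate down to a closed orientable surface of positive genus with $R>0$, contradicting Gauss--Bonnet); or reduce to the positive mass theorem stated above, lifting the metric to $\R^n$ and deforming it inside a large ball so that it becomes exactly Euclidean outside, producing an asymptotically flat metric of vanishing ADM mass that is not flat.

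I expect the main obstacle to be exactly the deformation needed for the positive-mass reduction: turning a $\Z^n$-periodic metric of positive scalar curvature into one that is Euclidean outside a compact set while keeping $R\ge 0$ throughout. This gluing is the technical core of the equivalence between the torus theorem and the positive mass theorem, it is where one genuinely uses that positive scalar curvature provides ``room'' to interpolate, and through the positive mass theorem it is limited to $n\le 7$. The Schoen--Yau route faces instead the regularity theory of area-minimizing hypersurfaces (again $n\le 7$), whereas the Dirac/enlargeability route avoids both difficulties and handles all $n$; in every case the passage from $R\ge 0$ to flatness still requires the conformal normalization together with the Bourguignon and Cheeger--Gromoll results quoted above.
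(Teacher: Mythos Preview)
Your outline is a faithful sketch of the classical proofs, but there is nothing to compare it against: the paper does not prove this continuous theorem at all. It merely quotes the result, attributes it to Schoen--Yau (\cite{SY-torus}, $n\le 7$) and Gromov--Lawson (\cite{Gromov1980SpinAS}, all $n$), and then moves on to prove a \emph{discrete} analogue (Theorem~\ref{thm:tori}) for weighted tori $T_A=A\Z^n/q\Z^n$ by a completely elementary argument: sum the explicit Ollivier-curvature formula around each coordinate cycle, observe that the non-absolute terms telescope and the absolute terms are $\le 0$, so the total scalar curvature is $\le 0$.

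So your proposal is correct in content but aimed at the wrong target. The conformal/Bourguignon/Cheeger--Gromoll reduction and the enlargeability or minimal-surface arguments you describe are precisely the continuous proofs the paper is citing, not supplying; none of that machinery appears in the paper. If the intent was to supply a proof of the discrete statement actually proved here, the approach is far simpler than anything you wrote: just use Corollary~\ref{cor:scal} and sum over closed coordinate loops.
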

		
		The theorem is firstly proved by Scheon-Yau for $n\le 7$ in \cite{SY-torus}, and proved for general cases by Gromov-Lawson in \cite{Gromov1980SpinAS} using the other approach. We will prove a discrete analog of the above result on a discrete torus using similar methods for the positive mass theorem on grid graphs.
		
		\subsection{Discrete tori}
		
		We define discrete tori following \cite{tori}. Consider an $n\times n$ integer matrix $A=[\alpha_1,\cdots,\alpha_n]$ with $\det A\neq 0,$ where $\alpha_i$ is a column vector, $i=1,\cdots,n.$ Take $k\in \Z_+$ and $q=k|\det A|.$ The $n$-dimensional torus $T_A$ with $k^n|\det A|$ vertices is defined to be
		$$
		T_A:=A\Z^n/q\Z^n.
		$$
		
		\begin{example}
			For $n=2,$ let $\alpha_1=(2,-1)^T$, $\alpha_2=(1,3)^T$, $k=1$ and $q=7$. The resulting torus $T_A$ is shown in Figure \ref{torus}.\label{e-torus}

			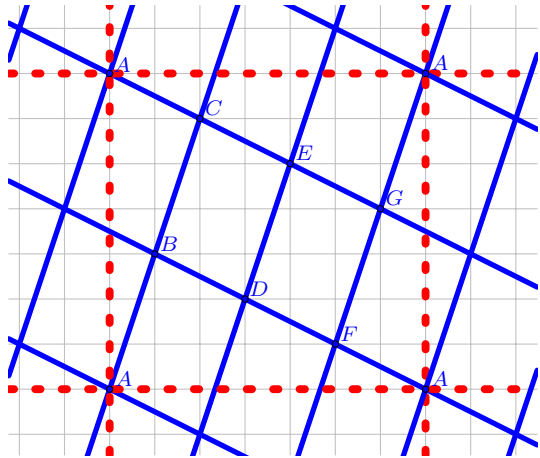
\begin{figure}[htbp]
				\centering
				\definecolor{ffqqqq}{rgb}{1,0,0}
				\definecolor{qqqqff}{rgb}{0,0,1}
				\definecolor{cqcqcq}{rgb}{0.7529411764705882,0.7529411764705882,0.7529411764705882}
				\begin{tikzpicture}[line cap=round,line join=round,>=triangle 45,x=1cm,y=1cm,scale=0.6]
					\draw [color=cqcqcq,, xstep=1cm,ystep=1cm] (-2.2342648359592334,-1.4602253362572672) grid (9.473317215232584,8.509835703803578);
					\clip(-2.2342648359592334,-1.4602253362572672) rectangle (9.473317215232584,8.509835703803578);
					\draw [line width=2pt,color=qqqqff,domain=-2.2342648359592334:9.473317215232584] plot(\x,{(-0--3*\x)/1});
					\draw [line width=2pt,color=qqqqff,domain=-2.2342648359592334:9.473317215232584] plot(\x,{(-0-1*\x)/2});
					\draw [line width=2pt,color=qqqqff,domain=-2.2342648359592334:9.473317215232584] plot(\x,{(--7-1*\x)/2});
					\draw [line width=2pt,color=qqqqff,domain=-2.2342648359592334:9.473317215232584] plot(\x,{(--14-1*\x)/2});
					\draw [line width=2pt,color=qqqqff,domain=-2.2342648359592334:9.473317215232584] plot(\x,{(--7--3*\x)/1});
					\draw [line width=2pt,color=qqqqff,domain=-2.2342648359592334:9.473317215232584] plot(\x,{(-7--3*\x)/1});
					\draw [line width=2pt,color=qqqqff,domain=-2.2342648359592334:9.473317215232584] plot(\x,{(-14--3*\x)/1});
					\draw [line width=2pt,color=qqqqff,domain=-2.2342648359592334:9.473317215232584] plot(\x,{(-7-1*\x)/2});
					\draw [line width=3pt,dash pattern=on 2pt off 8pt,color=ffqqqq,domain=-2.2342648359592334:9.473317215232584] plot(\x,{(--49-0*\x)/7});
					\draw [line width=3pt,dash pattern=on 2pt off 8pt,color=ffqqqq] (7,-1.4602253362572672) -- (7,8.509835703803578);
					\draw [line width=3pt,dash pattern=on 2pt off 8pt,color=ffqqqq,domain=-2.2342648359592334:9.473317215232584] plot(\x,{(-0-0*\x)/-7});
					\draw [line width=3pt,dash pattern=on 2pt off 8pt,color=ffqqqq] (0,-1.4602253362572672) -- (0,8.509835703803578);
					\draw [line width=2pt,color=qqqqff,domain=-2.2342648359592334:9.473317215232584] plot(\x,{(--63-3*\x)/6});
					\draw [line width=2pt,color=qqqqff,domain=-2.2342648359592334:9.473317215232584] plot(\x,{(-56-12*\x)/-4});
					\draw [line width=2pt,color=qqqqff,domain=-2.2342648359592334:9.473317215232584] plot(\x,{(-21--3*\x)/1});
					\draw [line width=2pt,color=qqqqff,domain=-2.2342648359592334:9.473317215232584] plot(\x,{(-28--3*\x)/1});
					\draw [line width=2pt,color=qqqqff,domain=-2.2342648359592334:9.473317215232584] plot(\x,{(--28-1*\x)/2});
					\begin{scriptsize}
						\draw [fill=qqqqff] (0,0) circle (2pt);
						\draw[color=qqqqff] (0.31345700646174059,0.23075493350408968) node {$A$};
						\draw [fill=qqqqff] (1,3) circle (2pt);
						\draw[color=qqqqff] (1.3166685426504386,3.230047155099157) node {$B$};
						\draw [fill=qqqqff] (2,6) circle (2pt);
						\draw[color=qqqqff] (2.3095376918681192,6.229339376694225) node {$C$};
						\draw [fill=qqqqff] (0,7) circle (2pt);
						\draw[color=qqqqff] (0.28242984554868807,7.201523751969867) node {$A$};
						\draw [fill=qqqqff] (4,5) circle (2pt);
						\draw[color=qqqqff] (4.315960764245516,5.236470227476547) node {$E$};
						\draw [fill=qqqqff] (3,2) circle (2pt);
						\draw[color=qqqqff] (3.312749228056818,2.2371780058814794) node {$D$};
						\draw [fill=qqqqff] (5,1) circle (2pt);
						\draw[color=qqqqff] (5.308829913463197,1.2339664696927848) node {$F$};
						\draw [fill=qqqqff] (6,4) circle (2pt);
						\draw[color=qqqqff] (6.312041449651895,4.233258691287852) node {$G$};
						\draw [fill=qqqqff] (7,0) circle (2pt);
						\draw[color=qqqqff] (7.315252985840593,0.23075493350408968) node {$A$};
						\draw [fill=qqqqff] (7,7) circle (2pt);
						\draw[color=qqqqff] (7.315252985840593,7.232550912882919) node {$A$};
					\end{scriptsize}
				\end{tikzpicture}
				\caption{The torus $T_A$ in Example \ref{e-torus}.\label{torus}}
			\end{figure}
		\end{example}
		
		Note that $x\sim y\in A\Z^n$ if and only if $x-y=\pm\alpha_i,$ $i=1,\cdots,n.$ The weighted graph structure on $T_A$ is induced by the quotient. That is, $[x]\sim [y]\in T_A$ if and only if $x-y\in q\Z^n\pm \alpha_i,$ and for $x\sim y\in A\Z^n,$ we have $w([x],[y])=w(x,y).$ On tori, we have the following formula for the Ollivier Ricci curvature.
		
		\begin{prop}\label{TorusOllivier}
			Let $T_A=A\Z^n/q\Z^n$ be a discrete torus, $[x]\in T_A$. Suppose $y=x+\alpha_n.$ If $\foa u,v\in B_1(\{x,y\})\subset A\Z^n,$ $d(u,v)=d([u],[v]),$ then
			\begin{small}
				\begin{equation}
					\begin{aligned}
						\kappa([x],[y])=&\kappa(x,y)\\
						=&2w(x,y)-w(x,x-\alpha_n)-w(y,y+\alpha_n)\\
						-&\sum_{i=1}^{n-1}(|w(x,x+\alpha_i)-w(y,y+\alpha_i)|+|w(x,x-\alpha_i)-w(y,y-\alpha_i)|).
					\end{aligned}\label{TO}
				\end{equation}
			\end{small}

		\end{prop}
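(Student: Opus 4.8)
The plan is to reduce everything to the already-established grid-graph formula, Proposition~\ref{prop:ric}, and to use the distance hypothesis to guarantee that the quotient map $\pi:A\Z^n\to T_A$ does not alter the local data seen by the Ollivier functional near $\{x,y\}$.

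First I would observe that the map $\Z^n\to A\Z^n$, $v\mapsto Av$, is a bijection (since $\det A\neq 0$ and $A$ is integral) carrying $\pm e_i$-steps to $\pm\alpha_i$-steps, hence a weighted isomorphism from the standard grid graph onto $(A\Z^n,w)$ (with the $q\Z^n$-periodic weights). Therefore Proposition~\ref{prop:ric} applies verbatim on $A\Z^n$ and yields, for $y=x+\alpha_n$, precisely the right-hand side of \eqref{TO} as the value of $\kappa(x,y)$ computed in $A\Z^n$. Thus the only remaining content of the proposition is the identity $\kappa([x],[y])=\kappa(x,y)$.

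Next I would extract from the hypothesis that $\pi$ restricts to a weighted-graph isomorphism of $B_1(\{x,y\})\subset A\Z^n$ onto $B_1(\{[x],[y]\})\subset T_A$ which moreover preserves the restricted distance function. Indeed, putting $[u]=[v]$ in $d(u,v)=d([u],[v])$ forces $u=v$, so $\pi$ is injective on $B_1(\{x,y\})$; any $[z']\sim[x]$ lifts to a representative $z=x\pm\alpha_i$, so $B_1(\{[x],[y]\})=\pi(B_1(\{x,y\}))$ and $\pi$ is a bijection between these $1$-balls; comparing the cases $d=1$ on the two sides shows $\pi$ and $\pi^{-1}$ preserve adjacency within the respective balls; and $w([x],[z])=w(x,z)$ by the definition of the weighted structure on $T_A$. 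In particular the Laplacians agree: for any $f$ on $B_1(\{[x],[y]\})$ one has $\Delta(f\circ\pi)(x)=(\Delta f)([x])$ and $\Delta(f\circ\pi)(y)=(\Delta f)([y])$.

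Finally, with this isomorphism in hand the two infima in \eqref{limit-free} defining $\kappa([x],[y])$ and $\kappa(x,y)$ become the same optimization problem: $f\mapsto f\circ\pi$ is a bijection between the admissible integer-valued test functions on the two $1$-balls; it preserves the $1$-Lipschitz condition because $\pi$ preserves distances on $B_1(\{x,y\})$ (a real-valued $1$-Lipschitz function on a subset of a graph extends, so this constraint depends only on the restricted distance), it preserves the normalization $\nabla_{yx}f=1$ since $d(x,y)=d([x],[y])=1$, and it preserves the objective $\nabla_{xy}\Delta f$ by the Laplacian identity above. Hence $\kappa([x],[y])=\kappa(x,y)$, and combined with the first paragraph this gives \eqref{TO}. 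The step requiring the most care is the middle one: without the distance hypothesis the quotient could identify two neighbours of $x$ (changing the degree and hence $\Delta$) or create a short cycle inside $B_1(\{[x],[y]\})$ (relaxing the Lipschitz constraint), so the real work is in verifying that the hypothesis genuinely produces an isomorphism of the relevant $1$-balls as weighted graphs equipped with the restricted distance; everything else is bookkeeping inherited from Proposition~\ref{prop:ric}.
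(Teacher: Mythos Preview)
Your proposal is correct and follows essentially the same approach as the paper: the paper's proof consists of just two sentences, noting that the argument is identical to Proposition~\ref{prop:ric} and that the distance hypothesis guarantees the local structure of the torus near $\{[x],[y]\}$ matches that of the grid graph. You have simply unpacked this in full detail---verifying that $\pi$ restricts to a distance-preserving weighted isomorphism on $B_1(\{x,y\})$ and hence identifies the two optimization problems in \eqref{limit-free}---which is exactly what the paper leaves implicit.
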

		
		\begin{proof}
			The proof is the same as Proposition \ref{prop:ric}. Restrictions about the distances ensure the local structure of a discrete torus is same as the grid graph.
		\end{proof}
		
		The distance requirement in the above proposition holds for all $y\sim x$ if 
		\begin{align}
			d(u,v)=d([u],[v]),\quad \foa u,v\in B_2(x)\subset A\Z^n.\label{distance}
		\end{align}
		 
		We call (\ref{distance}) the \textit{distance condition}. This is to say that every ball with radius $2$ can be isometrically embedded into the torus. It is automatically true if the torus is large enough. 
		
		The scalar curvature on a discrete torus is defined to be 
		$$R([x])=\sum_{[y]\sim [x]}\kappa([x],[y]),$$
		and the total curvature of a discrete torus is $\sum_{[x]\in T_A}R([x]).$
		
		\begin{proof}[Proof of Theorem \ref{thm:tori}]
			We first claim that
			$$
			S_i([x]):=\sum_{j=0}^{q-1}\kappa([x+j\alpha_i],[x+(j+1)\alpha_i])\le 0.
			$$
			Note that $[x]=[x+q\alpha_i],$ hence this is the sum of curvatures in a cycle on tori, where the edges may be covered multiple times. Plug the formula (\ref{TO}) into the summation. Terms preceding the absolute terms in (\ref{TO}) cancel out in the summation. The remaining terms consist of absolute terms with minus sign. Thus, we have $S_i([x])\le 0.$
			
			Assume curvatures of edges are added $s_i$ times in the summation. Take $r_i\in \Z_+$ to be the minimal positive integer such that $[x]=[x+r_i\alpha_i],$ then $s_i=\frac{q}{r_i}.$ The sum of Ricci curvatures over all edges in the direction of $\alpha_i$ can be calculated as
			$$\frac{1}{s_i}\sum_{[x]\in T_A}\frac{S_i([x])}{r_i}=\frac{\sum_{[x]\in T_A} S_i([x])}{q}\le 0.$$
			
			Note that 
			$$\sum_{[x]\in T_A}R([x])=2\sum_{i=1}^n\frac{\sum_{[x]\in T_A} S_i([x])}{q},$$ 
			because $R([x])$ sums up the curvatures of edges incident to $[x]$, and each edge has two ends, contributing twice in the summation of scalar curvatures. The equality gives us $\sum_{[x]\in T_A} R([x])\le 0.$ This gives the conclusion. Furthermore, if the scalar curvatures on the torus are non-negative, then the torus has to be scalar-flat.
		\end{proof}
		
		\subsection{Other results}
		We have the following theorem in \cite[Corollary B]{Gromov1980SpinAS}: 
		\begin{thm} 
			Let $X=T^n\# X_0$ where $X_0$ is a spin manifold, $n\ge 2$. Then $X$ carries no metric of positive scalar curvature. Any metric of non-negative scalar curvature on $X$ is flat, and $X$ must be the standard torus.
			
		\end{thm}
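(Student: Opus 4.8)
The plan is to prove this via Gromov--Lawson's notion of \emph{enlargeability} combined with index theory for the twisted Dirac operator; since the statement holds for all $n\ge 2$ in the spin category, the Dirac-operator route is the natural one, with the minimal-hypersurface method (restricted to $n\le 7$) only as an alternative for the nonexistence part. First I would record that $X=T^n\# X_0$ is spin, being a connected sum of spin manifolds, and that $X$ is enlargeable. The torus is the model enlargeable manifold: its universal cover $\R^n$ admits a degree-one map to $S^n$ that is constant outside a ball, and precomposing with the dilation $x\mapsto \varepsilon x$ (equivalently, passing to the finite covers $T^n\to T^n$ given by multiplication by $k$) yields $\varepsilon$-contracting degree-one maps $\R^n\to S^n$ for every $\varepsilon>0$. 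Enlargeability is a diffeomorphism invariant, since any two metrics on a compact manifold are bi-Lipschitz equivalent, and it passes to connected sums: the collapsing map $X\to T^n$ that crushes the $X_0$-summand to a point has degree one, and lifting the $\varepsilon$-contracting maps through the covers of $X$ associated to the covers of $T^n$ produces the required maps on a finite spin cover of $X$. Hence $X$ is a closed enlargeable spin manifold.

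Next I would rule out positive scalar curvature by the Lichnerowicz--Bochner argument with almost-flat bundles. Suppose $\mathrm{scal}\ge \kappa_0>0$; this lifts to every finite cover $\widetilde X$. Fix a Hermitian bundle $E\to S^n$ with $\int_{S^n}\operatorname{ch}(E)\neq 0$ and pull it back along an $\varepsilon$-contracting map $f:\widetilde X\to S^n$ of nonzero degree; because $f$ is $\varepsilon$-contracting, the curvature of $f^*E$ is $O(\varepsilon)$, so in the Weitzenb\"ock formula $D_{E}^2=\nabla^*\nabla+\tfrac{\mathrm{scal}}{4}+\mathcal R^{E}$ the zeroth-order term $\tfrac{\mathrm{scal}}{4}+\mathcal R^{E}$ is strictly positive once $\varepsilon$ is small, forcing $\ker D_{E}=0$ and thus $\mathrm{ind}\,D_{E}=0$. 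On the other hand, Atiyah--Singer gives $\mathrm{ind}\,D_{E}=\int_{\widetilde X}\widehat A(\widetilde X)\operatorname{ch}(f^*E)$, whose top-degree term is a nonzero multiple of $\deg(f)\int_{S^n}\operatorname{ch}(E)\neq 0$, a contradiction. In odd dimensions the index of a single operator can vanish for parity reasons, so I would first replace $X$ by the still enlargeable spin manifold $X\times S^1$ (preserving positivity of scalar curvature) to reduce to the even-dimensional case. This settles the nonexistence of positive scalar curvature.

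For the rigidity statement assume only $\mathrm{scal}\ge 0$. The borderline version of the argument above, using the relative index of the almost-flat twisted operator, shows the kernel must consist of parallel spinors; this forces $\mathrm{scal}\equiv 0$ and, via the spinorial Bochner identity, $\Ric\equiv 0$. A compact Ricci-flat enlargeable manifold is in fact flat: one passes to the universal cover and applies the Cheeger--Gromoll splitting theorem, enlargeability guaranteeing enough lines to split off a flat $\R^n$-factor, so the metric is flat. Once the metric is flat, $X$ is a compact flat manifold and $\pi_1(X)$ is a Bieberbach group, in particular virtually $\Z^n$. But $\pi_1(X)=\Z^n * \pi_1(X_0)$ is a nontrivial free product whenever $\pi_1(X_0)\neq 1$, and such a free product contains a nonabelian free subgroup, contradicting virtual abelianness; hence $\pi_1(X_0)=1$. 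A simply connected spin summand renders $X$ aspherical only when $X_0$ is a homotopy sphere, in which case the connected sum is trivial and $X$ is the standard torus.

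The step I expect to be the main obstacle is the rigidity, specifically upgrading $\mathrm{scal}\ge 0$ to flatness: the index vanishing yields only $\mathrm{scal}\equiv 0$ at once, and promoting this to $\Ric\equiv 0$ and then to flatness requires the sharp almost-flat, parallel-spinor analysis in the equality case together with the splitting theorem, where controlling the $O(\varepsilon)$ bundle contributions precisely at the borderline is delicate. By contrast, the nonexistence of strictly positive scalar curvature is robust, resting only on the strict positivity of the Weitzenb\"ock zeroth-order term for small $\varepsilon$.
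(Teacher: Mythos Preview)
The paper does not prove this theorem at all: it is quoted verbatim as \cite[Corollary~B]{Gromov1980SpinAS} to motivate the discrete analogue, and no argument is supplied. So there is no ``paper's own proof'' to compare against; your outline is in fact a sketch of the original Gromov--Lawson argument that the paper is citing.

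As a sketch of that argument, your nonexistence part is correct and standard: enlargeability of $T^n$, its stability under connected sum via the collapse map, the twisted Dirac operator with almost-flat coefficient bundle, and the parity trick $X\mapsto X\times S^1$ for odd $n$. For the rigidity part, two places deserve tightening. First, upgrading $\mathrm{scal}\equiv 0$ to $\Ric\equiv 0$ is more cleanly done by the Kazdan--Warner/Bourguignon deformation lemma (a scalar-flat, non-Ricci-flat metric on a closed manifold can be perturbed to one of positive scalar curvature), rather than by extracting parallel spinors in the borderline index argument. Second, ``enlargeability guarantees enough lines for Cheeger--Gromoll'' is not quite the mechanism: what you actually use is the Cheeger--Gromoll structure theorem, which says a compact manifold with $\Ric\ge 0$ has virtually abelian $\pi_1$; since $\pi_1(T^n\# X_0)=\Z^n * \pi_1(X_0)$ for $n\ge 3$ contains a nonabelian free group unless $\pi_1(X_0)=1$, you get the simply-connected summand directly, and then $\pi_1(X)=\Z^n$ forces the splitting $\tilde X=\R^n$. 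Note that the van Kampen free-product formula needs $n\ge 3$; the case $n=2$ is handled separately by Gauss--Bonnet. Finally, the last step---that $T^n\#\Sigma\cong T^n$ for a homotopy sphere $\Sigma$---is a nontrivial smoothing fact about tori and should be cited rather than asserted.
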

		
		The theorem has a strong connection to the positive mass theorem. This relationship is more apparent in the discrete case. If the structure on a discrete torus has only been changed locally, we can create a graph that satisfies the conditions in Theorem~\ref{thm:mainric} by opening up the torus. This process is analogous to the inverse approach of constructing a salami. Applying Theorem~\ref{thm:mainric}, we obtain the following corollary.
		
		\begin{cor}			
			Let $T_A$ be a discrete torus with $w\equiv 1$. Suppose the natural map $Q_{r}\hookrightarrow T_A$ is an embedding, i.e. it is injective and $[x]\sim [y]$ if and only if $x\sim y,$ $\foa x,y\in Q_r.$ Then one cannot change the topology structure or weights inside $Q_{r-1}\subset T_A,$ such that the Ollivier curvatures are still non-negative. 
			
		\end{cor}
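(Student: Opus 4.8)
The plan is to argue by contradiction, realizing a hypothetical modification of the torus as a localized perturbation of the standard grid and then invoking Theorem~\ref{thm:mainric}. So suppose $T'$ is a weighted graph obtained from $T_A$ by altering the combinatorics and/or the weights of the induced subgraph on $Q_{r-1}$ only (so $T'$ agrees with $T_A$, and in particular has all weights $1$, on $T_A\setminus Q_{r-1}$, with the edges issuing from $\delta Q_{r-1}$ unchanged), suppose $T'$ has non-negative Ollivier curvature on every edge, and suppose the modification is genuine, i.e. $T'$ is not weighted isomorphic to $T_A$ by an isomorphism that is the identity off $Q_{r-1}$. We will derive a contradiction.

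Working in the $A$-coordinates, identify $A\Z^n$ with the standard grid $\Z^n$, so that $T_A\cong\Z^n/\Lambda$ for a full rank sublattice $\Lambda\subset\Z^n$ with the embedded cube $Q_r$ sitting around the origin; injectivity of $Q_r\hookrightarrow T_A$ means exactly that $\Lambda$ contains no nonzero vector of $\ell^\infty$-norm at most $2r$. Let $\mathcal C$ be the covering graph of $T'$ obtained from $\Z^n$ by installing the altered copy of $Q_{r-1}$ inside each translate $\lambda+Q_{r-1}$, $\lambda\in\Lambda$. Since the Ollivier curvature of \eqref{limit-free} depends only on the weighted isomorphism type of a bounded neighbourhood of an edge, and the covering map $\mathcal C\to T'$ is a local isomorphism at that scale (the systole being $\ge 2r+1$), $\mathcal C$ has non-negative Ollivier curvature everywhere. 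Now let $\hat G$ be the graph obtained from the standard grid $(\Z^n,1)$ by installing the altered copy of $Q_{r-1}$ at the origin only, all other edges keeping weight $1$. Then $\hat G$ coincides with the standard grid off a finite set $K$ (the altered copy of $Q_{r-1}$), so $\hat G\setminus K$ is weighted isomorphic to $\Z^n\setminus Q_{r-1}$ with all weights $1$ and with $E(K,\Phi^{-1}(\Z^n\setminus Q_r))=\emp$; hence $\hat G$ is asymptotically flat, the decay of $w$, $\Abs$ and $R$ being trivial since everything is exactly standard far out.

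The key point is that $\hat G$ also has non-negative Ollivier curvature on every edge. For an edge whose curvature-relevant neighbourhood is disjoint from the altered region, $\hat G$ looks there like the standard grid, so $\kappa=0$. For an edge whose neighbourhood meets the altered region, that neighbourhood lies inside the altered copy of $Q_{r-1}$ together with a bounded collar of standard grid, and this is weighted isomorphic to the corresponding region of $\mathcal C$ because in $\mathcal C$ the other lifted copies of the perturbation are too far away to interfere; granting this, $\kappa_{\hat G}=\kappa_{\mathcal C}\ge 0$ on such edges too. Therefore Theorem~\ref{thm:mainric} applies to $\hat G$ and forces $\hat G$ to be the standard grid graph. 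But then the altered copy of $Q_{r-1}$ inside $\hat G$ is itself $(Q_{r-1},1)$ with the standard boundary edges, i.e. the modification of $T_A$ was not genuine --- a contradiction. This proves the corollary.

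The step that needs care is the claim that the bounded curvature-relevant neighbourhoods near the altered region are unaffected by the other lifted perturbations of $\mathcal C$. The separation $\|\lambda\|_\infty\ge 2r+1$ for nonzero $\lambda\in\Lambda$ keeps distinct translates $\lambda+Q_{r-1}$ apart, but the collars around the altered region reach out to $\ell^\infty$-distance of order $r+2$, which is the same order; one must use the finer fact that the vertex boundary $\delta Q_{r-1}$ consists of ``face-type'' vertices (exactly one extremal coordinate), whereas any nonzero $\lambda\in\Lambda$ that could actually reach such a collar must realize the minimal $\ell^\infty$-norm $2r+1$, and the full embedding condition $Q_r\hookrightarrow T_A$ forces such a $\lambda$ to have at least two extremal coordinates --- so the two regions are in fact disjoint. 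This finite combinatorial check is the only genuine obstacle; the rest is the soft transfer of curvature through the covering $\mathcal C\to T'$ and the application of Theorem~\ref{thm:mainric}.
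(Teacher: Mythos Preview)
Your approach is essentially the paper's: unfold the modified torus to obtain an asymptotically flat graph $\hat G$ (the standard grid with a single altered copy of $Q_{r-1}$ at the origin) and invoke Theorem~\ref{thm:mainric} to conclude that $\hat G$, hence the modification, is trivial. The paper's own ``proof'' is only the one-sentence remark preceding the corollary and does not spell out any of the curvature transfer.

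One comment on execution: the detour through the $\Lambda$-periodic cover $\mathcal C$ is not needed and makes the argument more delicate rather than less. The systole bound ``$\ge 2r+1$'' you invoke is for $T_A$, not for $T'$, and the altered region $K$ can shrink distances drastically (think of $K$ a single vertex joined to every face vertex of $S_r$), so it is not obvious that $\mathcal C\to T'$ is a local isometry at the required scale. The direct route is cleaner: for an edge $\{x,y\}$ of $\hat G$ with $\|x\|_\infty,\|y\|_\infty\ge r+1$, the set $B_1(x)\cup B_1(y)$ lies entirely in the unit-weight grid part of $\hat G$, the neighbours of $x$ and $y$ are exactly the grid neighbours, and the specific distance-$1$ constraints used in Proposition~\ref{prop:ric} are all present, so $\kappa_{\hat G}(x,y)=0$. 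For an edge touching $K\cup S_r$, the embedding hypothesis $Q_r\hookrightarrow T_A$ makes the weighted neighbourhood $B_1(x)\cup B_1(y)$ in $\hat G$ isomorphic to the corresponding neighbourhood in $T'$, so $\kappa_{\hat G}=\kappa_{T'}\ge 0$. Your combinatorial observation that any nonzero $\lambda\in\Lambda$ with $\|\lambda\|_\infty=2r+1$ must have at least two extremal coordinates is correct and is exactly what keeps the identification clean at the boundary; but you can use it for this direct comparison rather than for separating lifts in $\mathcal C$.
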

	
		The embedding condition ensures that we can open up the torus without changing the local structure of $Q_r\subset T_A,$ and the statement about changing the structure inside a closed cube is to mimic the connected sum in the discrete sense.

\section{Appendix}

In this appendix, we will explain why Theorem~\ref{thm:main1} and Theorem~\ref{thm:mainric} fail when allowing for general vertex weights, i.e., there is a function $m:V\rightarrow(0,\infty)$, and the Laplacian is generalized to
$$
\Delta f(x)=\frac{1}{m(x)}\sum_{y\sim x}w(x,y)(f(y)-f(x)).
$$
The Ollivier curvature of a graph with general vertex weight $m$ is defined as in $(\ref{limit-freeR})$, see \cite{limit-free} for details. In our setting above, we were implicitly assuming $m\equiv 1$. 

If we allow varying vertex weights, there might be multiple vertices sharing the same coordinates, as shown in the following example.
\begin{example}
	Consider the graph as in  Figure \ref{fig:counter-example1}.
	\begin{figure}[htbp]
	\centering
	\definecolor{qqqqff}{rgb}{0,0,1}
	\definecolor{ccqqqq}{rgb}{1,0,0}
	\definecolor{xdxdff}{rgb}{0.49019607843137253,0.49019607843137253,1}
	\definecolor{ududff}{rgb}{0.30196078431372547,0.30196078431372547,1}
	\begin{tikzpicture}[line cap=round,line join=round,>=triangle 45,x=1cm,y=1cm,scale=0.8]
		\clip(1,2) rectangle (9,7);
		\draw [line width=1pt,domain=-0.2583784073871919:10.07484580550937] plot(\x,{(-0--3*\x)/2});
		\draw [line width=1pt,domain=-0.2583784073871919:10.07484580550937] plot(\x,{(--12-0*\x)/4});
		\draw [line width=1pt,domain=-0.2583784073871919:10.07484580550937] plot(\x,{(--24-0*\x)/4});
		\draw [line width=1pt,domain=-0.2583784073871919:10.07484580550937] plot(\x,{(--12-3*\x)/-2});
		\draw [line width=1pt,domain=-0.2583784073871919:3.005153434290766] plot(\x,{(-13.48784088881639-0.019493133348718494*\x)/-3.005153434290766});
		\draw [line width=1pt,domain=6.998976237781286:10.07484580550937] plot(\x,{(--17.896071425136057--0.0021910878766844277*\x)/3.9816713831586616});
		\draw [line width=1pt,domain=6:10.07484580550937] plot(\x,{(-6--3*\x)/2});
		\draw [line width=1pt,domain=-0.2583784073871919:4] plot(\x,{(--6-3*\x)/-2});
		\draw [line width=1pt,color=ccqqqq] (3.005153434290766,4.507730151436149)-- (5,5);
		\draw [line width=1pt,color=ccqqqq] (3.005153434290766,4.507730151436149)-- (5,4);
		\draw [line width=1pt,color=ccqqqq] (5,4)-- (4,3);
		\draw [line width=1pt,color=ccqqqq] (5,4)-- (6.998976237781286,4.49846435667193);
		\draw [line width=1pt,color=ccqqqq] (6.998976237781286,4.49846435667193)-- (5,5);
		\draw [line width=1pt,color=ccqqqq] (5,5)-- (6,6);
		\draw [line width=1pt,color=ccqqqq] (5,5)-- (4,3);
		\draw [line width=1pt,color=ccqqqq] (5,4)-- (6,6);
		\draw [line width=1pt,color=qqqqff] (5,5)-- (5,4);
		\begin{scriptsize}
			\draw [fill=ududff] (2,3) circle (2.5pt);
			\draw [fill=ududff] (4,6) circle (2.5pt);
			\draw [fill=ududff] (6,3) circle (2.5pt);
			\draw [fill=ududff] (8,6) circle (2.5pt);
			\draw [fill=xdxdff] (3.005153434290766,4.507730151436149) circle (2.5pt);
			\draw [fill=xdxdff] (6.998976237781286,4.49846435667193) circle (2.5pt);
			\draw [fill=ududff] (10.980647620939948,4.500655444548614) circle (2.5pt);
			\draw [fill=xdxdff] (6,6) circle (2.5pt);
			\draw [fill=ududff] (8,9) circle (2.5pt);
			\draw [fill=xdxdff] (4,3) circle (2.5pt);
			\draw [fill=xdxdff] (2,0) circle (2.5pt);
			\draw [fill=ududff] (5,5) circle (2.5pt);
			\draw [fill=ududff] (5,4) circle (2.5pt);
			\draw[color=qqqqff] (4.9,5.3) node {$a$};
			\draw[color=qqqqff] (5.1,3.7) node {$b$};
			\end{scriptsize}
		\end{tikzpicture}
		\caption{The multi-vertices problem in $2$-dimension. The outside is the standard grid graph.} \label{fig:counter-example1}
	\end{figure}
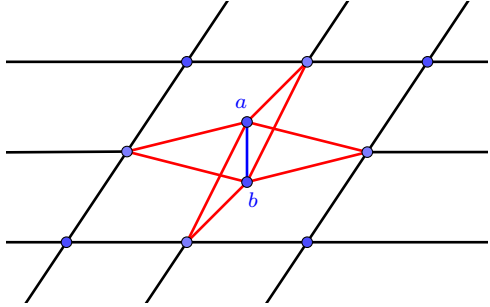
	
	Assume that all vertex weights are $1$ except for the vertices $a,b$ with weights of $\frac{1}{2}.$ Let weights of black edges be $1$. The weights are $\frac{1}{2}$ for the red edges, and $\frac{1}{4}$ for the blue edge $\{a,b\}$. By standard calculations similar to the Proposition \ref{prop:ric}, one can show that the curvatures are non-negative everywhere. Specifically, the Ricci curvatures are zero for all edges except the blue edge $\{a,b\}$, which has the curvature 5.
	
	Hence this graph is not a standard grid graph, though it has non-negative Ricci curvature and zero energy.
	
\end{example}

Moreover, the positive mass theorem fails when the dimension $n=1.$ The multi-vertices problem still happens, see the following example. This indicates that the constraints of curvature conditions on topology structure have to be carried out through multiple directions.

\begin{example}
	Consider the graph as shown in Figure \ref{fig:counter-example2} with vertex weights $m\equiv 1$.
	
	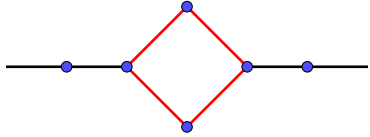
\begin{figure}[htbp]
		\centering
		\definecolor{qqqqff}{rgb}{0,0,1}
		\definecolor{ccqqqq}{rgb}{1,0,0}
		\definecolor{xdxdff}{rgb}{0.49019607843137253,0.49019607843137253,1}
		\definecolor{ududff}{rgb}{0.30196078431372547,0.30196078431372547,1}
		\begin{tikzpicture}[line cap=round,line join=round,>=triangle 45,x=1cm,y=1cm,scale=0.8]
			\clip(3,4) rectangle (9,8);
			\draw [line width=1pt,domain=7:9.937669330469639] plot(\x,{(--6-0*\x)/1});
			\draw [line width=1pt,domain=1.6980386794402798:5] plot(\x,{(-6-0*\x)/-1});
			\draw [line width=1pt,color=ccqqqq] (5,6)-- (6,7);
			\draw [line width=1pt,color=ccqqqq] (6,7)-- (7,6);
			\draw [line width=1pt,color=ccqqqq] (7,6)-- (6,5);
			\draw [line width=1pt,color=ccqqqq] (6,5)-- (5,6);
			\begin{scriptsize}
				\draw [fill=ududff] (7,6) circle (2.5pt);
				\draw [fill=ududff] (8,6) circle (2.5pt);
				\draw [fill=ududff] (5,6) circle (2.5pt);
				\draw [fill=ududff] (4,6) circle (2.5pt);
				\draw [fill=ududff] (6,7) circle (2.5pt);
				\draw [fill=ududff] (6,5) circle (2.5pt);
			\end{scriptsize}
		\end{tikzpicture}
		\caption{The multi-vertices problem in the 1-dimension case. The outside is a standard grid line.} \label{fig:counter-example2}
	\end{figure}

	Assume that all edge weights are $1$ except for the red edges with weights of $\frac{1}{2}$. It is a Ricci-flat graph with zero energy, but it is not the standard grid line $\Z.$
	
\end{example}

\textbf{Acknowledgements.} B. Hua is supported by NSFC, no.11831004, and by Shanghai Science and Technology Program [Project No. 22JC1400100].
	
\bibliographystyle{plain}
\bibliography{PMT}

\end{document}